\documentclass[hidelinks,onefignum,onetabnum]{siamart250211}


\usepackage{lipsum}
\usepackage{amsfonts}
\usepackage{graphicx}
\usepackage{epstopdf}
\usepackage{algorithmic}
\ifpdf
  \DeclareGraphicsExtensions{.eps,.pdf,.png,.jpg}
\else
  \DeclareGraphicsExtensions{.eps}
\fi


\newsiamremark{remark}{Remark}
\newsiamremark{hypothesis}{Hypothesis}
\crefname{hypothesis}{Hypothesis}{Hypotheses}
\newsiamthm{claim}{Claim}
\newsiamremark{fact}{Fact}
\crefname{fact}{Fact}{Facts}

\headers{Space-Time Variational Schrödinger Evolution}{M.-S. Dupuy, V. Ehrlacher, C. Guillot}

\title{A space-time variational formulation for the many-body electronic Schrödinger evolution equation}

\author{Mi-Song Dupuy\thanks{Laboratoire Jacques-Louis Lions, Paris, France 
  (\email{mi-song.dupuy@sorbonne-universite.fr}, \url{https://msdupuy.github.io/}).}
\and Virginie Ehrlacher\thanks{École Nationale des Ponts et Chaussées, Champs-sur-Marne, France 
  (\email{virginie.ehrlacher@enpc.fr}, \url{https://team.inria.fr/matherials/team-members/virginie-ehrlacher-galland/}).}
\and Clément Guillot\thanks{École Nationale des Ponts et Chaussées, Champs-sur-Marne, France 
  (\email{clement.guillot@enpc.fr}, \url{https://team.inria.fr/matherials/team-members/clement-guillot/}).}
  }

\usepackage{amsopn}


\usepackage{fancyvrb}

\usepackage{amsmath}
\usepackage{amssymb}
\usepackage{stmaryrd}

\usepackage{xparse}	

\usepackage{array}

\usepackage{algorithm}
\usepackage{algorithmic}
\usepackage{verbatim}

\usepackage{soul}

\usepackage{geometry}
\geometry {
	a4paper,
	margin=2cm
}
\usepackage{lineno}

\usepackage{comment}
\usepackage{graphicx}
\usepackage{graphics}
\usepackage{subcaption}

\allowdisplaybreaks

\usepackage{url}

%
%

\newcommand{\NN}{\mathbb N}
\newcommand{\ZZ}{\mathbb Z}

\newcommand{\RR}{\mathbb R}
\newcommand{\CC}{\mathbb C}

\newcommand{\TT}{\mathbb T}

\newcommand{\abs}[1]{\left | #1 \right |}
\newcommand{\conjugate}[1]{\overline{#1}}

\newcommand{\fctshortdef}[3]{ #1 : #2 \mapsto #3}

\newcommand{\fctlonganonymousdef}[4]{
	\left \{
	\begin{array}{rcl}
		#1	&	\longrightarrow	&	#2	\\
		#3	&	\longmapsto	&	#4	\\
	\end{array}
	\right.
}
\newcommand{\fctlongdef}[5]{
	#1 :
	\fctlonganonymousdef{#2}{#3}{#4}{#5}
}
\newcommand{\fctrestrict}[2]{{#1}_{| #2}}

\newcommand{\set}[2]{\left \{ #1 \, : \, #2 \right \}}
\newcommand{\quickset}[1]{\left \{ #1 \right \}}
\newcommand{\setindic}[1]{\mathbf{1}_{#1}}

\newcommand{\diff}[1]{\, d #1 \,}

\DeclareMathOperator*{\argmin}{argmin}

\newcommand{\eexp}{\operatorname e}

%
%


\DeclareMathOperator{\setspan}{span}


\newcommand{\norm}[1]{\left \| #1 \right \|}
\newcommand{\dotprodbracket}[2]{\langle #1, #2 \rangle}
\newcommand{\dotprodparenthesis}[2]{\left( #1 \middle | #2 \right)}

\newcommand{\domain}[1]{D\! \left( #1 \right)}
\newcommand{\formdomain}[1]{Q\! \left( #1 \right)}

\DeclareMathOperator{\range}{Ran}



\newcommand{\antidualspace}[1]{ #1^\dagger }
\newcommand{\dualitybracket}[2]{\langle #1, #2 \rangle}

\newcommand{\adjoint}[1]{ {#1}^* }

\newcommand{\Liebracket}[2]{\left[ {#1}, {#2} \right]}

%
%


\newcommand{\Cregset}[1]{\mathcal C^{#1}}

\newcommand{\Cregsetsp}[2]{\Cregset {#1} \! \left( {#2} \right)}
\newcommand{\Czerosetsp}[1]{\Cregsetsp 0 {#1}}

\newcommand{\Cinfsetsp}[1]{\Cregsetsp \infty {#1}}

\newcommand{\Ccset}[1]{\mathcal C_c^{#1}}

\newcommand{\Ccsetsp}[2]{\Ccset {#1} \! \left( {#2} \right)}
\newcommand{\Cczerosetsp}[1]{\Ccsetsp 0 {#1}}
\newcommand{\Cconesetsp}[1]{\Ccsetsp 1 {#1}}
\newcommand{\Ccinfsetsp}[1]{\Ccsetsp \infty {#1}}


\NewDocumentCommand{\smallO}{o m}{o \IfValueTF{#1}{_{#1}}{} \! \left( #2 \right)}
\NewDocumentCommand{\bigO}{o m}{\mathcal O \IfValueTF{#1}{_{#1}}{} \! \left( #2 \right)}

%
%

\newcommand{\setclosure}[1]{\overline{#1}}

%
%

\newcommand{\laplacian}{\Delta}

\newcommand{\Lspace}[1]{L^{#1}}
\newcommand{\Lspacesp}[2]{\Lspace{#1} \! \left( #2 \right)}
\newcommand{\Lone}{\Lspace 1}
\newcommand{\Lonesp}[1]{\Lspacesp 1 {#1}}
\newcommand{\Ltwo}{\Lspace 2}
\newcommand{\Ltwosp}[1]{\Lspacesp 2 {#1}}
\newcommand{\Linf}{\Lspace \infty}
\newcommand{\Linfsp}[1]{\Lspacesp \infty {#1}}
\newcommand{\Llocspace}[1]{L_{\text{loc}}^{#1}}
\newcommand{\Llocspacesp}[2]{\Llocspace{#1} \! \left( #2 \right)}

\newcommand{\Lloctwosp}[1]{\Llocspacesp 2 {#1}}



\newcommand{\Hspace}[1]{H^{#1}}
\newcommand{\Hspacesp}[2]{H^{#1} \! \left( #2 \right)}

\newcommand{\Hone}{\Hspace 1}
\newcommand{\Honesp}[1]{\Hspacesp 1 {#1}}

\DeclareMathOperator{\Fourier}{\mathcal F}
\newcommand{\Fourierhat}[1]{\widehat{#1}}

%
%
\newcommand{\HHilb}{\mathcal H}

\newcommand{\XX}{\mathcal X}
\newcommand{\YY}{\mathcal Y}



\begin{document}

	\maketitle

\medskip

 \bfseries Statements and Declarations: \normalfont This
publication is part of a project that has received funding from the European Research Council (ERC) under the European Union’s Horizon 2020 Research and Innovation Programme – Grant Agreement 101077204.

\medskip

\begin{abstract}
We prove in this paper that the solution to the time-dependent Schrödinger equation can be expressed as the solution of a global space-time quadratic minimization problem that is amenable to Galerkin time-space discretization schemes, using an appropriate least-square formulation. The present analysis can be applied to the electronic many-body time-dependent Schrödinger equation with an arbitrary number of electrons and interaction potentials with Coulomb singularities. We motivate the interest of the present approach with two goals: first, the design of Galerkin space-time discretization methods; second, the definition of dynamical low-rank approximations following a variational principle different from the classical Dirac-Frenkel principle, and for which it is possible to prove the global-in-time existence of solutions.
\end{abstract}

\begin{keywords}
Schr\"odinger equation, Space-time variational formulation
\end{keywords}

\begin{MSCcodes}
35A15, 35Q41, 35L15, 65M15
\end{MSCcodes}

\section{Introduction}

The aim of this paper is to introduce a new global space-time variational formulation of the linear evolution Schrödinger equation
\begin{equation} \label{equ: Schrodinger evolution}
\begin{cases}
    (i \partial_t - H - B(t)) u^*(t) = f(t), \quad t\in I,	\\
    u^*(0) = u_0,
\end{cases}
\end{equation}
where $H$ is a self-adjoint operator on a separable Hilbert space $\HHilb$ with domain $\domain H$, $I = (0,T)$ for some $T>0$ and $B: \overline{I} \ni t \mapsto B(t)$ is a strongly continuous family of bounded self-adjoint operators on $\HHilb$, and $f \in \Ltwosp{I, \HHilb}$. 
Throughout the paper, the equation will often be studied first without the time dependent part of the operator (i.e. with $B(t) = 0$) before extending the result to the case of nonzero $B$'s, in which case we will refer to
\begin{equation} \label{equ: Schrodinger evolutionnoB}
\begin{cases}
    (i \partial_t - H ) u^{*,0}(t) = f(t), \quad t\in I	\\
    u^{*,0}(0) = u_0.
\end{cases}
\end{equation}
which is only a particular case of \eqref{equ: Schrodinger evolution}. \\
The general setting above includes the case of the time-dependent Schrödinger equation defined on the whole space (in arbitrary large dimension) and with interaction potential with Coulomb singularities. This includes in particular the case of the time-dependent Schrödinger evolution equation associated to the many-body electronic Hamiltonian for molecules. The latter concerns the following case: consider a system of $N\in \mathbb{N}^*$ electrons in a molecule with $M\in \mathbb{N}^*$ nuclei in the Born-Oppenheimer approximation \cite{born_zur_1927, combes2005born}. Let us assume that the positions and electric charges of the nuclei are given respectively as $R_1,\ldots, R_M \in \mathbb{R}^3$ and $Z_1, \ldots, Z_M>0$. Then, $\HHilb = L^2(\mathbb{R}^{3N})$ (or, for fermions, $\HHilb = \bigwedge_{i=1}^N L^2(\mathbb{R}^3)$ the set of antisymmetric functions of $L^2(\mathbb{R}^{3N})$) and
\begin{equation} \label{eq:Schroop}
H = -\Delta - \sum_{k=1}^M \sum_{i=1}^N \frac{Z_k}{|x_i-R_k|} + \sum_{1\leq i < j \leq N} \frac{1}{|x_i - x_j|},    
\end{equation}
the quantity $|\cdot|$ denoting the euclidean norm in $\RR^3$.

\medskip

The outline of this paper is as follows.
In \Cref{sec: Preliminaries}, we introduce the space of solutions (as defined in \Cref{def: definition weak solution}) to \eqref{equ: Schrodinger evolutionnoB}:
\begin{equation} \label{equ: XX definition}
    \XX_H
    =	\set{u^{*,0} \in \Ltwosp{I, \HHilb}}{ \exists (u_0, f) \in \HHilb \times \Ltwosp{I, \HHilb} \, \text{such that $u^{*,0}$ solves \eqref{equ: Schrodinger evolutionnoB}}}.
\end{equation}
We prove that a solution to \eqref{equ: Schrodinger evolution} belongs to $\XX_H$, and that $u^*$ solution to \eqref{equ: Schrodinger evolution} can also be characterized as follows:
\begin{equation} \label{equ: Min problem E}
    \text{Find $u^* \in \XX_H$ such that} \;
    u^* = \argmin_{u \in \XX_H} E(u),
\end{equation}
where
\begin{equation}
    \forall u \in \XX_H, \quad
    E(u)
    =   \abs{u(0) - u_0}^2 + T \norm{(i\partial_t - H - B(t)) u - f}_{\Ltwosp{I, \HHilb}}^2.
\end{equation}
In \Cref{sec: Space-time variational formulation for the Schrödinger equation}, we state our main results:
first, in \Cref{th: XX0 XX equality}, we will prove that when $H = H_0 + A$ with $H_0$ a ``well known" operator and $A$ a ``perturbation", we actually have the equality $\XX_H = \XX_{H_0}$. \\
We then state in \Cref{cor: Variational formulation v} that this result, combined with \eqref{eq:XHchar}, leads to another space-time formulation of~\eqref{equ: Schrodinger evolution}:
\begin{equation} \label{equ: Min problem F}
    \text{Find $v^* \in \Honesp{I, \HHilb}$ such that} \;
    v^* = \argmin_{v \in \Honesp{I, \HHilb}} F(v),
\end{equation}
where
\begin{equation} \label{equ: F definition}
    \forall v \in \Honesp{I, \HHilb}, \quad
    F(v) = \abs{v(0) - u_0}^2 + T \norm{(i\partial_t - \eexp^{itH_0} (A + B(t)) \eexp^{-itH_0}) v - \eexp^{itH_0} f}_{\Ltwosp{I, \HHilb}}^2.
\end{equation}
Since the functionals $E$ and $F$ are quadratic, both \eqref{equ: Min problem E} and \eqref{equ: Min problem F} can be associated to a Lax-Milgram variational problem of the form
\[
    \forall u \in \XX_H, \quad a(u^*, u)  = l(u),
    \quad (\text{resp.} \; \forall v \in \Honesp{I, \HHilb}, \quad b(v^*, v)  = m(v))
\]
where $a: \XX_H \times \XX_H \to \CC$ (resp. $b: \Honesp{I, \HHilb} \times \Honesp{I, \HHilb} \to \CC$) is a continuous hermitian coercive sesquilinear form on $\XX_H \times \XX_H$ (resp. $\Honesp{I, \HHilb} \times \Honesp{I, \HHilb}$) and $l: \XX_H \to \CC$ (resp. $m: \Honesp{I, \HHilb} \to \CC)$ is a continuous linear form. More precisely, we introduce an appropriate least-square formulation~\cite{ern2016linear} of the latter equation,  satisfying the following wish list (under suitable assumptions on the Hamiltonian $H$): 
\begin{itemize}
    \item[(i)] the coercivity and continuity constants of the bilinear form $a$ should not decrease (respectively increase) faster than some inverse of polynomial (respectively polynomial) with the value of the final time $T$;
\item[(ii)] it should yield practical Galerkin space-time numerical schemes; 
\item[(iii)] it should be useful to define new dynamical low-rank approximations using a different variational principle than the classical Dirac-Frenkel principle~\cite{bachmayr2023low, bachmayr2016tensor, falco2019dirac, lubich2004variational, lubich2005variational, legeza2014tensor}. The advantage of the proposed formulation is that it is possible then to prove the global-in-time existence of some dynamical low-rank approximations, without any restrictions on the data of the problem.
\end{itemize}
We would like to stress the fact that our present analysis covers the case of unbounded domains and interaction potentials with Coulomb singularities (see \Cref{subsec: Application to electronic Schrödinger}), which includes the case of the many-body electronic Schrödinger Hamiltonian introduced above. While our main motivation for the present work stems from the design of new global-in-time dynamical low-rank approximations, which is the object of another article~\cite{dupuy_low-complexity_nodate}, we present here some preliminary numerical results about global space-time Galerkin discretization schemes for the time-dependent Schrödinger equation associated to the variational formulation presented here.

Section~\ref{sec:low-rank} opens some perspectives about the usefulness of the proposed formulation for the definition of new dynamical low-rank approximations of the solution to the time-dependent Schrödinger equation, which can be proved to be well-posed globally in time without any restrictions on the data of the problem. As mentioned above, this line of research will be the object of a follow-up article~\cite{dupuy_low-complexity_nodate}.

Lastly, in Appendix~\ref{sec: Numerical implementation}, we propose Galerkin space-time discretization methods associated with the formulations proposed in Section~\ref{sec: Space-time variational formulation for the Schrödinger equation} and illustrate their numerical behavior.

\medskip

Similar least-square formulations exist for parabolic problems \cite{dautray_mathematical_2000}, the wave equation \cite{fuhrer2026well, hoonhout_stable_2023}, and the Navier-Stokes equation \cite{lemoine_fully_2021}.
Let us thus comment here about the state-of-the-art of global space-time discretization methods for the time-dependent Schrödinger equation. In~\cite{demkowicz2017spacetime}, a spacetime discontinuous Petrov-Galerkin (DPG) method for the linear time-dependent Schrödinger equation is proposed. Two variational formulations are proved to be well posed: a strong formulation (with no relaxation of the original equation) and a weak formulation (also called the \textit{ultra-weak formulation}, which transfers all derivatives onto test functions). However, the analysis is restricted to the case of an equation posed on a bounded domain and without an interaction potential. 
 In~\cite{gomez2022space} and~\cite{gomez2024space},  space-time discontinuous Galerkin methods for the  linear Schrödinger equation are proposed, where the equations are posed on a bounded domain, and with potentials that are bounded. 
 In these works, the focus is on setting the appropriate discontinuous Galerkin method for the Schrödinger evolution. 
 In~\cite{hain_ultra-weak_2024}, the authors propose an ultra-weak global space-time formulation for the time-dependent Schrödinger operator with instationary Hamiltonian. An associated discretization scheme is proposed method as a Petrov-Galerkin global space-time discretization method. The scope of the latter work is however restricted to the case of bounded domains and bounded interaction potentials. Let us also mention here the works~\cite{karakashian1998space,karakashian1999space} where continuous and discontinuous methods for the nonlinear cubic Schrödinger equation are analyzed. We stress here the fact that our present analysis is restricted to the case of linear time-dependent Schrödinger equations. However, some ideas and tools used in this paper also appear in the nonlinear Schrödinger literature \cite{bourgain_global_1999}.

\section{Preliminaries} \label{sec: Preliminaries}

The aim of this section is twofold: (i) we introduce some notation which will be used throughout the article, together with the appropriate notion of weak solution for the time-dependent Schrödinger equations we consider in this work; (ii) we introduce a preliminary global time-space variational formulation of the time-dependent Schrödinger problem in the case when the Hamiltonian does not depend on time. However, we will see that this first variational formulation is not convenient to use in practice for numerical purposes, which motivates the main results we establish in the next section.

\subsection{Notations and Weak Solutions}

Throughout this paper, we fix some final time $T > 0$, and the interval $I = (0, T)$.
We also fix $(\HHilb, \dotprodbracket{\cdot}{\cdot})$ a separable Hilbert space with the associated norm $\abs \cdot = \sqrt{\dotprodbracket{\cdot}{\cdot}}$.
For a bounded operator $\mathfrak B$ defined on $\HHilb$, we define its operator norm as
\begin{equation}
    \norm{\mathfrak B} = \sup_{x \in \HHilb \setminus \quickset 0} \frac{\abs{{\mathfrak B} x}}{\abs x}.
\end{equation}
Unless stated otherwise, $H$ is a self-adjoint operator on $\HHilb$ with domain $\domain H \subset \HHilb$, and $B: \setclosure I \ni t \mapsto B(t)$ is a strongly continuous family of bounded self-adjoint operators on $\HHilb$.

We consider the Bochner space $\Ltwosp{I, \HHilb}$, which is a Hilbert space when equipped with the inner product
\begin{equation} \label{equ: Bochner inner product}
    \forall u, v \in \Ltwosp{I, \HHilb}, \quad
    \dotprodparenthesis u v_{\Ltwosp{I, \HHilb}}
    =	\int_I \diff t \dotprodbracket{u(t)}{v(t)}.
\end{equation}
The norm associated with the inner product \eqref{equ: Bochner inner product} is denoted by $\norm \cdot_{\Ltwosp{I, \HHilb}}$. See for example \cite{hytonen2016analysis} for an introduction to Bochner spaces. For any normed space $E$, we also denote by $\Cregsetsp{k}{I, E}$ (resp. $\Ccsetsp{k}{I, E}$) the space of $k$-th times continuously differentiable functions (resp. with compact support $K \subset I$) from $I$ to $E$.

\medskip

We use a standard notion of weak solutions to \eqref{equ: Schrodinger evolution} defined as follows.
Although the paper mostly focuses on time intervals of the form $(0, T)$, it will be useful for us to state the definition for more general time intervals:
\begin{definition} \label{def: definition weak solution}
    Let $J \subset \RR$ be a (possibly unbounded) open time interval such that $0 \in \setclosure J$.
    Let $u_0 \in \HHilb$ and $f \in \Ltwosp{J, \HHilb}$. \\
    An element $u \in \Lloctwosp{J, \HHilb}$ is said to be a weak solution to \eqref{equ: Schrodinger evolutionnoB} if the following two conditions are satisfied:
    \begin{enumerate}
        \item[(C1)] For any $\varphi \in \Cczerosetsp{J, \domain H} \cap \Cconesetsp{J, \HHilb}$,
        \begin{equation}
            \dotprodparenthesis{u}{(i\partial_t - H) \varphi}_{\Ltwosp{J, \HHilb}}
            =	\dotprodparenthesis{f}{\varphi}_{\Ltwosp{J, \HHilb}}.
        \end{equation}
        \item[(C2)] The equality $u(0) = u_0$ holds in $\HHilb$.
    \end{enumerate}
\end{definition}

\begin{remark}
    By Proposition \ref{prop: Weak evolution derivative}, (C1) implies that $u \in \Czerosetsp{\setclosure J, \HHilb}$, hence $u(0)$ is well defined as an element of $\HHilb$, and (C2) can be understood as a pointwise equality.
\end{remark}

\begin{remark}
    A consequence of Definition \ref{def: definition weak solution} is that, for $J = I = (0, T)$ and $u \in \Ltwosp{I, \HHilb}$, we say that $u$ is a weak solution to \eqref{equ: Schrodinger evolution} if:
    \item[(C1)] For any $\varphi \in \Cczerosetsp{J, \domain H} \cap \Cconesetsp{J, \HHilb}$,
    \begin{equation}
        \dotprodparenthesis{u}{(i\partial_t - H - B(t)) \varphi}_{\Ltwosp{I, \HHilb}}
        =	\dotprodparenthesis{f}{\varphi}_{\Ltwosp{I, \HHilb}}.
    \end{equation}
    \item[(C2)] The equality $u(0) = u_0$ holds in $\HHilb$.
\end{remark}

\subsection{Duhamel formula and functional space}




Let us equip the space $\XX_H$ defined in \eqref{equ: XX definition} with the inner product
\begin{equation} \label{equ: XX inner product definition}
    \forall u,v\in \XX_H,\; \dotprodparenthesis u v_{\XX_H}
    =	\dotprodbracket{u(0)}{v(0)} + T \dotprodparenthesis{(i\partial_t - H) u}{(i\partial_t - H) v}_{\Ltwosp{I, \HHilb}}.
\end{equation}
The associated norm is then denoted by $\norm u_{\XX_H} = \sqrt{\dotprodparenthesis{u}{u}_{\XX_H}}$.
The factor $T$ in \eqref{equ: XX inner product definition} is introduced for homogeneity, ensuring that the estimates for the $\Czerosetsp{I,\HHilb}$ norm have constants independent of $T$ (see Proposition~\ref{prop: XX properties}). It is an arbitrary choice, and removing it will result in an equivalent norm (with equivalence constants that depend on $T$), and will also influence the constants in \Cref{th: XX0 XX equality} and \Cref{cor: Variational formulation v}.

The essential properties of the space $\XX_H$ are summarized in the following result, the proof of which only requires to carefully work with Definition \ref{def: definition weak solution} and can be found in Appendix \ref{sec: Weak solutions to the Schrödinger equation}:
\begin{proposition} \label{prop: XX properties}
    The space $\XX_H$ equipped with the inner product $(\cdot|\cdot)_{\XX_H}$ defined in \eqref{equ: XX inner product definition} is a Hilbert space, and the application
    \begin{equation}
        \fctlonganonymousdef{\Ltwosp{I, \HHilb}}{\Ltwosp{I, \HHilb}}{u}{\left( I\ni t \mapsto \eexp^{itH}u(t) \right)}
    \end{equation}
    defines an isomorphism between $\XX_H$ and $\Honesp{I, \HHilb}$.
    In particular, $\XX_H$ is continuously embedded into $\Czerosetsp{I, \HHilb}$, and we have the estimate
    \begin{equation} \label{equ: XX continity estimate}
            \forall u \in \XX_H, \quad
        \norm u_{\Czerosetsp{I, \HHilb}} \leq \sqrt 2 \norm u_{\XX_H}.
    \end{equation}

    Furthermore, for any $f \in \Ltwosp{I, \HHilb}$, $u_0 \in \HHilb$ and $B: \overline{I} \ni t \mapsto B(t)$ a strongly continuous family of bounded self-adjoint operators acting on $\HHilb$, there exists a unique solution $u^* \in \Ltwosp{I, \HHilb}$ to \eqref{equ: Schrodinger evolution} in the sense of \Cref{def: definition weak solution}. Moreover, $u^*$ belongs to $\XX_H$ and satisfies
    \begin{equation}
        \label{equ: Duhamel formula}
        \forall t \in I, \quad
        u^*(t)
        =	\eexp^{-itH}u_0 -i \int_0^t \diff s \eexp^{-i(t-s)H} (f(s) - B(s)u^*(s)).
    \end{equation}
\end{proposition}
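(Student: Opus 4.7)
The plan is to pass to the interaction picture. Since $H$ is self-adjoint, Stone's theorem yields a strongly continuous unitary group $(\eexp^{-itH})_{t \in \RR}$, and I define
\[ \Phi : u \longmapsto \left( t \mapsto \eexp^{itH} u(t) \right). \]
A formal computation suggests that if $u \in \XX_H$ corresponds to data $(u_0, f)$ in the sense of \Cref{def: definition weak solution} with $B = 0$, then $v = \Phi u$ satisfies $i \partial_t v = \eexp^{itH} f$ with $v(0) = u_0$, an equation solvable by direct integration. Making this rigorous will simultaneously yield the Hilbert space structure on $\XX_H$, the isomorphism to $\Honesp{I, \HHilb}$, the embedding into $\Czerosetsp{\setclosure I, \HHilb}$, and the Duhamel representation.

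The main technical obstacle is showing that a weak solution $u$ in the sense of (C1) actually gives $v = \Phi u \in \Honesp{I, \HHilb}$ with weak derivative $-i \eexp^{itH} f$. To do this I would test (C1) against functions of the form $\varphi(t) = \eexp^{-itH} \psi(t)$ for $\psi \in \Ccinfsetsp{I, \domain H}$, exploit that $H$ commutes with $\eexp^{-itH}$ on $\domain H$, and reinterpret the result as the distributional identity $i\partial_t v = \eexp^{itH} f$ on $I$. Admissibility of such $\varphi$ as test functions, together with the density needed to characterize the distributional derivative, is precisely what Proposition~\ref{prop: Weak evolution derivative} is invoked to provide. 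Conversely, given any $v \in \Honesp{I, \HHilb}$, setting $u(t) := \eexp^{-itH} v(t)$ produces a weak solution of \eqref{equ: Schrodinger evolutionnoB} with $u_0 = v(0)$ and $f = -i \eexp^{-i\cdot H} \partial_t v \in \Ltwosp{I, \HHilb}$, so $\Phi$ is a bijection.

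Equipping $\Honesp{I, \HHilb}$ with the (standard-equivalent) norm $\sqrt{|v(0)|^2 + T \|\partial_t v\|^2_{\Ltwosp{I, \HHilb}}}$, the unitarity of $\eexp^{itH}$ shows that $\Phi$ is an isometry; completeness and the Hilbert structure of $\XX_H$ then follow immediately from the corresponding facts for $\Honesp{I, \HHilb}$. For the continuous embedding, the representation $v(t) = v(0) + \int_0^t \partial_s v(s) \diff s$ combined with Cauchy--Schwarz yields $|v(t)|^2 \leq 2 |v(0)|^2 + 2T \|\partial_t v\|^2_{\Ltwosp{I, \HHilb}}$, and the identity $|u(t)| = |v(t)|$ then delivers the estimate \eqref{equ: XX continity estimate} with a constant $\sqrt 2$ independent of $T$.

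Finally, for the case of a nontrivial time-dependent perturbation $B$, I would treat the Duhamel relation \eqref{equ: Duhamel formula} as a fixed-point equation in $\Czerosetsp{[0, T_0], \HHilb}$, using that the strong continuity of $B$ on the compact interval $\setclosure I$ together with the uniform boundedness principle gives $M := \sup_{t \in \setclosure I} \|B(t)\| < \infty$. For any $T_0 < 1/M$, the associated integral operator is a Banach contraction, and iterating over consecutive subintervals of length $T_0$ produces a unique global solution $u^* \in \Czerosetsp{\setclosure I, \HHilb} \subset \Ltwosp{I, \HHilb}$. Setting $\tilde f := f - B(\cdot) u^* \in \Ltwosp{I, \HHilb}$, substitution into the fixed-point identity shows that $u^*$ is a weak solution of \eqref{equ: Schrodinger evolutionnoB} with source $\tilde f$, so $u^* \in \XX_H$ by the $B = 0$ case already established.
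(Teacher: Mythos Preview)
Your proposal is correct and follows essentially the same route as the paper's Appendix~\ref{sec: Weak solutions to the Schrödinger equation}: the interaction-picture map $\Phi$ is shown to be an isomorphism onto $\Honesp{I,\HHilb}$ via Proposition~\ref{prop: Weak evolution derivative}, and the time-dependent perturbation $B$ is handled by a Banach fixed-point argument on the Duhamel integral equation. The only minor technical difference is that the paper runs the contraction globally in $\Ltwosp{I,\HHilb}$ equipped with the weighted norm $\bigl(\int_0^T \eexp^{-\mu t}\abs{u(t)}^2\,dt\bigr)^{1/2}$ (choosing $\mu$ large) rather than iterating on short subintervals in $C^0$, but the two devices are interchangeable here.
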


In other words, the first part of Proposition~\ref{prop: XX properties} states that the set $\XX_H$ can be equivalently characterized as follows:
\begin{equation}\label{eq:XHchar}
\XX_H = \set{I \ni t \mapsto \eexp^{-itH}v(t)}{v\in H^1(I, \HHilb)}.
\end{equation}

\begin{remark}
    It is easy to check that, whenever $H$ is bounded, we simply have $\XX_H = \Honesp{I, \HHilb}$, and the norms $\norm \cdot_{\XX_H}$ and $\norm \cdot_{\Honesp{I, \HHilb}}$ are equivalent.
    However, this equality does not hold when $H$ is an unbounded operator: let $\varphi \in \HHilb \setminus \domain H$, and define $\fctshortdef{\psi}{I \ni t}{\varphi}$. It is clear that $\psi \in \Honesp{I, \HHilb} \setminus \XX_H$.
    Conversely, the function $\fctshortdef{\eta}{I \ni t}{\eexp^{-itH} \varphi}$ belongs to $\XX_H \setminus \Honesp{I, \HHilb}$.
\end{remark}

\begin{remark}
    We do not include the time dependent part $B(t)$ in the definition \eqref{equ: XX definition} because, as we will prove in Theorem \ref{th: XX0 XX equality}, this would yield the same space with a different norm
    \begin{equation} 
        \forall u \in \XX_H, \quad
        N(u) = \left( \abs{u(0)}^2 + T \norm{(i\partial_t - H - B(t))u}_{\Ltwosp{I, \HHilb}}^2 \right)^{\frac 1 2},
    \end{equation}
    which turns out to be equivalent to $\norm \cdot_{\XX_H}$.
\end{remark}

Thanks to \Cref{prop: XX properties}, one can reformulate the solution $u^{*,0}$ of the evolution equation \eqref{equ: Schrodinger evolutionnoB} (in the sense of \Cref{def: definition weak solution}) as follows: since
\begin{equation}
    \forall u \in \XX_H, \quad
    \norm{u - u^{*,0}}_{\XX_H}^2
    =   \abs{u(0) - u_0}^2 + T \norm{(i\partial_t - H) u - f}_{\Ltwosp{I, \HHilb}}^2,
\end{equation}
the solution $u^{*,0}$ of \eqref{equ: Schrodinger evolutionnoB} is equivalently the unique solution to the minimization problem
\begin{equation} \label{equ: Naive variational formulation}
    u^{*,0}
    =   \argmin_{u \in \XX_H} \left( \abs{u(0) - u_0}^2 + T \norm{(i\partial_t - H) u - f}_{\Ltwosp{I, \HHilb}}^2 \right).
\end{equation}
Although this problem is well-posed and seemingly simple, this formulation is unsatisfactory, since the only explicit characterization given for $\XX_H$ at this point involves the evolution group $\eexp^{-itH}$ (as can be seen from equation~(\ref{eq:XHchar})). In particular, finding an orthonormal basis (or a Riesz basis) of $\XX_H$ is not a trivial task.
\Cref{th: XX0 XX equality} will provide a more satisfying characterization of $\XX_H$ under some assumptions.

\medskip

In our analysis, we will need the following property of the space $\XX_H$. Let us define the space
\begin{equation} \label{equ: Def W inf}
\mathcal W^\infty_H
=   \bigcap_{k \in \NN} \Cinfsetsp{\setclosure I, \domain{H^k}}.
\end{equation}
The spaces $\domain{H^k}$ are defined recursively by $\domain{H^k} := \set{\varphi \in \domain{H^{k-1}}}{H^{k-1} \varphi \in \domain H}$ and endowed with the graph norm of $H^k$.
The following density result then holds.
\begin{proposition}
The space $\mathcal W^\infty_H$ defined in \eqref{equ: Def W inf} is a dense subspace of $\XX_H$, and for any $u, v \in \XX_H$ we have
\begin{equation} \label{equ: XX IBP}
    \int_I \diff t \dotprodbracket{(i\partial_t - H) u(t)}{v(t)}
    =	-i \left( \dotprodbracket{u(T)}{v(T)} - \dotprodbracket{u(0)}{v(0)} \right) + \int_I \diff t \dotprodbracket{u}{(i\partial_t - H) v}.
\end{equation}
\end{proposition}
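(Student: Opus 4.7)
My plan is to exploit the isomorphism $\Phi : \XX_H \to \Honesp{I, \HHilb}$, $\Phi(u)(t) = \eexp^{itH} u(t)$, established in Proposition~\ref{prop: XX properties}, to reduce the density question to constructing $\Honesp{I, \HHilb}$-convergent approximations whose preimages under $\Phi$ lie in $\mathcal W^\infty_H$. Given $u \in \XX_H$, I set $v := \Phi(u) \in \Honesp{I, \HHilb}$. Standard density (extension to $\RR$ and mollification in time) first yields a sequence $(v_n) \subset \Cinfsetsp{\setclosure I, \HHilb}$ converging to $v$ in $\Honesp{I, \HHilb}$. To gain regularity with respect to $H$, I would then apply the spectral cutoff $\chi_m := \mathbf{1}_{[-m,m]}(H)$, which is a bounded self-adjoint operator with $\norm{H^k \chi_m} \leq m^k$ and range contained in $\bigcap_{k \in \NN} \domain{H^k}$, so that $v_{n,m}(t) := \chi_m v_n(t)$ belongs to $\Cinfsetsp{\setclosure I, \domain{H^k}}$ for every $k$. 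Since $\chi_m$ commutes with $H$ and with $\eexp^{-itH}$, the preimage $u_{n,m}(t) := \eexp^{-itH} v_{n,m}(t)$ takes values in $\bigcap_{k} \domain{H^k}$, and the Leibniz formula
\[
    \partial_t^j u_{n,m}(t) = \eexp^{-itH} \sum_{\ell=0}^j \binom{j}{\ell} (-iH)^\ell v_{n,m}^{(j-\ell)}(t)
\]
shows that $u_{n,m} \in \mathcal W^\infty_H$. The convergence $v_{n,m} \to v$ in $\Honesp{I, \HHilb}$ is then obtained by letting $m \to \infty$ first (using the strong convergence $\chi_m \to I_{\HHilb}$ together with $\partial_t \chi_m v_n = \chi_m \partial_t v_n$ and dominated convergence), then $n \to \infty$, and finally a diagonal extraction; applying the continuous inverse $\Phi^{-1}$ yields the required approximation in $\XX_H$.

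For the integration by parts formula, I would first establish it on $\mathcal W^\infty_H \times \mathcal W^\infty_H$. For such pairs $(u, v)$, a pointwise computation gives
\[
    \dotprodbracket{(i\partial_t - H) u(t)}{v(t)} - \dotprodbracket{u(t)}{(i\partial_t - H) v(t)} = -i \partial_t \dotprodbracket{u(t)}{v(t)},
\]
using, on the one hand, antilinearity of $\dotprodbracket{\cdot}{\cdot}$ in the first argument (which turns both $i$-factors into $-i$ via $\partial_t \dotprodbracket{u}{v} = \dotprodbracket{\partial_t u}{v} + \dotprodbracket{u}{\partial_t v}$) and, on the other hand, the self-adjointness identity $\dotprodbracket{Hu(t)}{v(t)} = \dotprodbracket{u(t)}{Hv(t)}$, valid since $u(t), v(t) \in \domain{H^k} \subset \domain H$. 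Integrating over $I$ delivers \eqref{equ: XX IBP} in the smooth setting. To extend it to arbitrary $u, v \in \XX_H$, I would approximate by sequences $(u_n), (v_n) \subset \mathcal W^\infty_H$ via the density just proved and pass to the limit: every term in \eqref{equ: XX IBP} is continuous in $\norm \cdot_{\XX_H}$, the boundary contributions by the continuous embedding \eqref{equ: XX continity estimate} into $\Czerosetsp{\setclosure I, \HHilb}$, and the time-integrated pairings by the very definition of $\norm \cdot_{\XX_H}$, which controls the $\Ltwosp{I, \HHilb}$-norm of $(i\partial_t - H) u$.

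The main obstacle is the combined regularization: a candidate sequence in $\mathcal W^\infty_H$ must be simultaneously smooth in time and spectrally bounded for $H$, while converging in the $\XX_H$ norm, which entangles time derivatives and $H$ through the unitary group $\eexp^{-itH}$. The spectral projector $\chi_m(H)$ is the key device that decouples these two requirements, because it commutes with both $H$ and $\eexp^{-itH}$ and thereby turns $\XX_H$-convergence into the more transparent $\Honesp{I, \HHilb}$-convergence of the regularized $v_{n,m}$ through the isomorphism $\Phi$; this is the only nontrivial point beyond routine application of the spectral theorem and classical Bochner-Sobolev density.
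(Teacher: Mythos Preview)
Your proof is correct and follows essentially the same two-step strategy as the paper: a time-mollification to gain $C^\infty$ regularity in $t$, followed by a spectral cutoff $\mathbf{1}_{[-m,m]}(H)$ to force values into $\bigcap_k \domain{H^k}$, and then extension of the integration-by-parts identity from $\mathcal W^\infty_H$ by density and continuity. The only cosmetic difference is that you carry out both regularizations on the $\Honesp{I,\HHilb}$ side via the isomorphism $\Phi$, whereas the paper mollifies $u$ directly (after extending it to $\RR$ by Duhamel) and applies the spectral projector on the $\XX_H$ side.
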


\begin{proof}
We first prove that $\Cinfsetsp{\bar I, \HHilb}$ is a dense subset of $\XX_H$. Subsequently, we demonstrate that $\mathcal W^\infty_H$ is also a dense subset of $\XX_H$.

Let $u^{*,0} \in \XX_H$, $f = (i\partial_t - H) u^{*,0}$ and $u_0 = u^{*,0}(0)\in \HHilb$.
Define for any $t \in \RR$
\[
    \tilde u(t)
    =	\eexp^{-itH}u_0 -i \int_0^t \diff s \eexp^{-i(t-s)H} \setindic I(s) f(s),
\]
where $\setindic I$ is the characteristic function of the interval $I$.
As in Corollary \ref{cor: Weak solutions unbounded}, it can be checked that $\tilde u \in \Lloctwosp{\RR, \HHilb}$ is the unique solution to the evolution problem
\[
\begin{cases}
    (i\partial_t - H) \tilde u = \setindic I f,	\quad t \in \RR\\
    \tilde u(0) = u_0,
\end{cases}
\]
in the sense of Definition \ref{def: definition weak solution}.
In particular, the uniqueness of weak solutions implies that $\fctrestrict{\tilde u}{I} = u^{*,0}$.
Now, for all $n\geq 0$, let $\rho_n : \RR \to \RR$ be such that $(\rho_n)_{n\geq 0}$ is a family of mollifiers (for the $t$ variable) and
\[
    \forall t \in \RR, \quad
    u_n(t)
    =	\rho_n \ast \tilde u(t)
    =	\int_{\RR} \diff s \rho_n(s) \tilde u(t - s).
\]
The usual properties of mollifiers imply
\[
\begin{cases}
    u_n \in \Cinfsetsp{\bar I, \HHilb},	\\
    u_n \xrightarrow[n \to \infty]{} \tilde u \quad \text{in $\Ltwosp{I, \HHilb}$},	\\
    f_n = (i\partial_t - H) u_n = \rho_n \ast (\setindic I f) \xrightarrow[n \to \infty]{} f \quad \text{in $\Ltwosp{I, \HHilb}$}.
\end{cases}
\]
To check the last equality, take $\varphi \in \Cczerosetsp{I, \domain H} \cap \Cconesetsp{I, \HHilb}$ and compute
\[
\begin{aligned}
    \dotprodparenthesis{u_n}{(i\partial_t - H) \varphi}
    &=  \int_\RR \diff s \rho_n(s) \dotprodparenthesis{\tilde u(\cdot - s)}{(i\partial_t - H) \varphi}  \\
    &=  \int_\RR \diff s \rho_n(s) \dotprodparenthesis{f(\cdot - s)}{\varphi} \\
    &=  \dotprodparenthesis{\rho_n \ast f}{\varphi}.
\end{aligned}
\]
We have thus proved that $\norm{\fctrestrict{u_n}{I} - u}_{\XX_H} \to 0$. This proves that $\Cinfsetsp{\bar I, \HHilb}$ is a dense subspace of $\XX_H$.

Now let $u \in \Cinfsetsp{\setclosure I, \HHilb}$.
Let $n \geq 0$ and $\pi_{(-n, n)}(H)$ be the spectral projector on $(-n, n)$ associated with the self-adjoint operator $H$. We then define $v_n(t) = \pi_{(-n, n)}(H) u(t)$ for all $t\in I$. We easily check that for all $n\geq 0$,
\[
\begin{cases}
    v_n \in \mathcal W^\infty_H,	\\
    v_n \xrightarrow[n \to \infty]{} u \quad \text{in $\Ltwosp{I, \HHilb}$},	\\
    f_n = (i\partial_t - H) v_n = \pi_{(-n, n)}(H) f \xrightarrow[n \to \infty]{} f \quad \text{in $\Ltwosp{I, \HHilb}$},
\end{cases}
\]
which implies that $\norm{v_n - u}_{\XX_H} \to 0$, and it follows that $\mathcal W^\infty_H$ is a dense subspace of $\XX_H$.

To prove \eqref{equ: XX IBP}, we simply integrate by part for any $u, v \in \mathcal W^\infty_H$ and conclude by density since both sides are continuous with respect to $\norm \cdot_{\XX_H}$.
\end{proof}

\section{Space-time variational formulation for the Schrödinger equation} \label{sec: Space-time variational formulation for the Schrödinger equation}

In this section, after stating some preliminary results and definitions in Section~\ref{subsec: Kato smoothing}, we state an abstract condition on operators $H$ and $H_0$ which guarantees the equality $\XX_H = \XX_{H_0}$. Section~\ref{subsec: Proof main results} contains the proof of this characterization. In Section~\ref{subsec: Application to electronic Schrödinger}, we consider the case of the electronic many-body hamiltonian \eqref{eq:Schroop}, and prove that the characterization of the previous sections holds for $H_0 = -\laplacian$. In Section~\ref{subsec: Time regularity of the solution}, we also focus on the electronic Schrödinger equation to obtain some additional time regularity for the solution under the assumption that the electronic potential is smooth. Lastly, in Section~\ref{subsec: Dual formulation}, we make some additional comments on an alternative formulation which relies on the use of the Duhamel formula and draw the link between the proposed formulation and previous works in the literature.

\subsection{Preliminary results from Kato smoothing theory} \label{subsec: Kato smoothing}

Our analysis requires some tools from Kato smoothing theory.
For the sake of completeness, we gather here all the definitions and results needed for the next sections. Most of them can be found in \cite{RS4} or can be directly derived from it.

\medskip

Throughout this section, $H_0$ is a self-adjoint operator on $\HHilb$ with domain $\domain{H_0}$, and $A$ is a symmetric closed operator on $\HHilb$.

\begin{lemma}[Plancherel identity] \label{lem: Operator Plancherel}
    Let $\varphi \in L^1(\mathbb{R}, \HHilb)$. Define for all $p\in \mathbb{R}$,
    \begin{equation}
        \Fourierhat{\varphi}(p) = (2 \pi)^{-\frac 1 2} \int_\RR \diff t \eexp^{-ipt} \varphi(t).
    \end{equation}
  Then
    \begin{equation}
        \int_\RR \diff t \abs{A \varphi(t)}^2
        =	\int_{\RR} \diff p \abs{A \Fourierhat \varphi(p)}^2,
    \end{equation}
    where the integrals are set equal to $\infty$ if we do not have that $\varphi(t)$ (resp. $\Fourierhat \varphi(p)$) is in $\domain A$ for almost all $t \in \RR$.
\end{lemma}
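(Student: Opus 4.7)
The strategy is to reduce to the classical (Hilbert-valued) Plancherel identity applied to $A\varphi$, viewed as an element of $\Ltwosp{\RR, \HHilb}$. The crucial ingredient is the commutation relation $A \Fourierhat \varphi(p) = \Fourierhat{A\varphi}(p)$ for almost every $p$, which transfers the identity $\norm{A\varphi}_{\Ltwosp{\RR, \HHilb}} = \norm{\Fourierhat{A\varphi}}_{\Ltwosp{\RR, \HHilb}}$ (classical Plancherel) to the desired $\norm{A\varphi}_{\Ltwosp{\RR, \HHilb}} = \norm{A\Fourierhat \varphi}_{\Ltwosp{\RR, \HHilb}}$. Only the closedness of $A$ will be used; the symmetry of $A$ plays no role in this proof.

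First I would establish the commutation in the regular setting where $\varphi, A\varphi \in \Lonesp{\RR, \HHilb}$ (with $\varphi(t) \in \domain A$ for a.e.\ $t$). In that case, both $\Fourierhat \varphi(p) = (2\pi)^{-1/2}\int_\RR \eexp^{-ipt} \varphi(t)\, dt$ and $(2\pi)^{-1/2}\int_\RR \eexp^{-ipt} A\varphi(t)\, dt$ are well-defined convergent Bochner integrals, and Hille's theorem on the commutation of closed operators with Bochner integrals yields, for every $p \in \RR$, that $\Fourierhat \varphi(p) \in \domain A$ with $A\Fourierhat \varphi(p) = \Fourierhat{A\varphi}(p)$.

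Next I would extend this commutation to the general setting of the lemma. Assume that the left-hand side is finite, i.e.\ $\varphi(t) \in \domain A$ for a.e.\ $t$ and $A\varphi \in \Ltwosp{\RR, \HHilb}$. I would approximate $\varphi$ by a sequence $(\varphi_n)$ obtained via a smooth mollification in time combined with a compactly supported cutoff, chosen so that $\varphi_n, A\varphi_n \in \Lonesp{\RR, \HHilb} \cap \Ltwosp{\RR, \HHilb}$, $\varphi_n \to \varphi$ in $\Lonesp{\RR, \HHilb}$, and $A\varphi_n \to A\varphi$ in $\Ltwosp{\RR, \HHilb}$; here the fact that $A$ commutes with scalar convolution and multiplication is immediate from its linearity. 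The previous step gives $A\Fourierhat{\varphi_n}(p) = \Fourierhat{A\varphi_n}(p)$ for every $p$. As $n \to \infty$, $\Fourierhat{\varphi_n} \to \Fourierhat \varphi$ uniformly on $\RR$ (from $\Lonesp{\RR, \HHilb}$-convergence), while the classical Plancherel theorem in $\Ltwosp{\RR, \HHilb}$ yields $\Fourierhat{A\varphi_n} \to \Fourierhat{A\varphi}$ in $\Ltwosp{\RR, \HHilb}$, hence pointwise a.e.\ up to extraction of a subsequence. Closedness of $A$ then implies that $\Fourierhat \varphi(p) \in \domain A$ and $A\Fourierhat \varphi(p) = \Fourierhat{A\varphi}(p)$ for a.e.\ $p$. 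Taking norms and applying classical Plancherel to $A\varphi$ closes the argument.

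The converse direction, in which the right-hand side is assumed finite, proceeds symmetrically by replacing the Fourier transform with its inverse throughout. The main technical obstacle lies in this approximation step: one must simultaneously ensure $\Lonesp{\RR, \HHilb}$-convergence of $\varphi_n \to \varphi$ (needed to commute the Fourier integral with the limit via the regular case) and $\Ltwosp{\RR, \HHilb}$-convergence of $A\varphi_n \to A\varphi$ (needed to invoke the Plancherel theorem), which requires some care since $\varphi$ is only assumed in $\Lonesp{\RR, \HHilb}$ and $A\varphi$ only in $\Ltwosp{\RR, \HHilb}$, with neither a priori in $\Lonesp{\RR, \HHilb} \cap \Ltwosp{\RR, \HHilb}$.
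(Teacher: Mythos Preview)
The paper does not supply its own proof of this lemma; it simply cites Reed--Simon~IV, Chapter~XIII, Lemma~1. Your argument is essentially the standard one found there: use Hille's theorem (closedness of $A$) to commute $A$ with the Bochner--Fourier integral, then invoke the Hilbert-valued Plancherel theorem on $A\varphi$, with the approximation step handling the passage from the $L^1$ setting of the Fourier integral to the $L^2$ setting of Plancherel. Your proposal is correct and matches the cited reference in spirit, though Reed--Simon dispatch the approximation somewhat more tersely.
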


\begin{proof}
    See \cite{RS4}, Chapter XIII, Section 3, Lemma 1, p. 142.
\end{proof}

\begin{proposition} \label{prop: Kato Smoothing}
    The two following properties are equivalent
    \begin{enumerate}
        \item[(i)] It holds that
        \[
            c_1 = \sup_{\varphi \in \HHilb,\; \abs \varphi = 1} \frac{1}{2\pi} \int_{\RR} \diff t \abs{A \eexp^{-itH_0} \varphi}^2 < \infty.
        \]
        
        \item[(ii)] It holds that $\domain{H_0} \subset \domain A$, and
        \[
            c_2 = \frac{1}{\pi} \sup_{\mu \in \CC \setminus \RR, \; \varphi\in \HHilb, \, \abs \varphi = 1} \abs{A (H_0 - \mu)^{-1} \varphi}^2 \abs{\Im \mu} < \infty,
        \]
        where $\Im \mu$ denotes the imaginary part of $\mu$.
    \end{enumerate}
    Moreover, when (i) and (ii) hold, then $c_1 = c_2$.
\end{proposition}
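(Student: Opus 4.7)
The plan is to prove the equivalence by establishing the two inequalities $c_2 \leq c_1$ and $c_1 \leq c_2$ separately. The main bridge between the time picture (condition (i)) and the spectral/resolvent picture (condition (ii)) is the Fourier--Laplace representation of the resolvent
\[
(H_0 - \mu)^{-1}\varphi = i\int_0^{+\infty} \eexp^{i\mu t}\eexp^{-itH_0}\varphi \diff{t} \qquad (\Im\mu > 0),
\]
together with its symmetric analogue (integral over $(-\infty,0)$, opposite sign) for $\Im\mu < 0$. Both formulas follow from Stone's theorem and the functional calculus for $H_0$.

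For the implication (i) $\Rightarrow$ (ii), I would fix $\varphi \in \HHilb$ with $\abs\varphi = 1$ and $\mu$ with $\Im\mu > 0$. Under (i), the map $t \mapsto \eexp^{-itH_0}\varphi$ takes values in $\domain A$ for almost every $t$, and $t \mapsto A\eexp^{-itH_0}\varphi$ belongs to $\Lspacesp 2 {\RR,\HHilb}$ with squared norm bounded by $2\pi c_1$. Applying Cauchy--Schwarz to the Fourier--Laplace representation yields
\[
\int_0^{+\infty}\eexp^{-\Im\mu \cdot t}\abs{A\eexp^{-itH_0}\varphi}\diff{t} \leq \frac{1}{\sqrt{2\,\Im\mu}}\,\sqrt{2\pi c_1}.
\]
Since $A$ is closed and this integral is finite, $A$ can be pulled inside the Bochner integral to obtain $(H_0-\mu)^{-1}\varphi \in \domain A$ along with $\abs{A(H_0-\mu)^{-1}\varphi}^2\, \Im\mu \leq \pi c_1$. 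The case $\Im\mu < 0$ is handled identically, so $c_2 \leq c_1$.

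For the reverse implication (ii) $\Rightarrow$ (i), I would regularize: for $\epsilon > 0$, define $g_\epsilon(t) = \eexp^{-\epsilon\abs t}\eexp^{-itH_0}\varphi$, which lies in $\Lspacesp 1 {\RR,\HHilb}$. Splitting its Fourier transform into the two half-lines, using the Fourier--Laplace formula on each, and applying a partial fraction identity gives
\[
\Fourierhat{g_\epsilon}(p) = \frac{2\epsilon}{\sqrt{2\pi}}(H_0+p-i\epsilon)^{-1}(H_0+p+i\epsilon)^{-1}\varphi.
\]
Since $\Fourierhat{g_\epsilon}(p) \in \domain{H_0} \subset \domain A$ by (ii), invoking the estimate from (ii) with $\mu = -p+i\epsilon$ on the argument $(H_0+p+i\epsilon)^{-1}\varphi$ produces
\[
\abs{A\Fourierhat{g_\epsilon}(p)}^2 \leq 2\epsilon c_2 \abs{(H_0+p+i\epsilon)^{-1}\varphi}^2.
\]
Integrating in $p$ and invoking the spectral identity $\int_\RR \abs{(H_0-\lambda+i\epsilon)^{-1}\varphi}^2 \diff\lambda = \pi\abs\varphi^2/\epsilon$ (which follows from Fubini and the Poisson-kernel computation $\int_\RR \frac{d\lambda}{(s-\lambda)^2+\epsilon^2} = \pi/\epsilon$) yields $\int_\RR \abs{A\Fourierhat{g_\epsilon}(p)}^2 \diff p \leq 2\pi c_2 \abs\varphi^2$. \Cref{lem: Operator Plancherel} then transports this bound to the time side, giving $\int_\RR \eexp^{-2\epsilon\abs t}\abs{A\eexp^{-itH_0}\varphi}^2 \diff t \leq 2\pi c_2\abs\varphi^2$, and monotone convergence as $\epsilon \downarrow 0$ delivers $c_1 \leq c_2$.

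The main technical obstacle is the careful handling of domain issues for the closed, a priori unbounded operator $A$: at each step one must either exploit its closedness (to commute $A$ with Bochner integrals in the forward direction) or invoke the a priori inclusion $\domain{H_0} \subset \domain A$ from (ii) (to ensure that the integrands in the Plancherel step stay in $\domain A$). The other delicate point is that the constant $c_2$ is defined as a supremum over \emph{all} $\mu \in \CC \setminus \RR$; both half-planes must be treated, which is why (i) is stated as an integral over the whole real line and why the regularization $\eexp^{-\epsilon\abs t}$ above is symmetric. Once this bookkeeping is performed, the two inequalities meet at the common value $c_1 = c_2$.
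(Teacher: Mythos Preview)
Your argument is correct and self-contained. The paper does not actually prove this proposition: it simply cites Reed--Simon~IV, Theorem~XIII.25. Your proof is essentially the standard one found there, and in particular your direction (ii)~$\Rightarrow$~(i) matches the computation the paper later excerpts in its proof of \Cref{lem: Local Kato Smoothing}, with one cosmetic difference: where the paper (following Reed--Simon) introduces the square-root operator $K_\varepsilon(\lambda) = \bigl(\varepsilon(H_0-\lambda-i\varepsilon)^{-1}(H_0-\lambda+i\varepsilon)^{-1}\bigr)^{1/2}$ and bounds $\abs{AK_\varepsilon(\lambda)^2\varphi} \leq \norm{AK_\varepsilon(\lambda)}\,\abs{K_\varepsilon(\lambda)\varphi}$, you instead write $\Fourierhat{g_\epsilon}(p)$ directly as a product of two resolvents and apply the bound from (ii) to one factor while leaving the other to be integrated via the Poisson-kernel identity. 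Both routes land on the same spectral-measure calculation $\int_\RR \frac{\varepsilon\,d\lambda}{(s-\lambda)^2+\varepsilon^2} = \pi$, and both give the sharp constant. Your version is marginally lighter in that it avoids the square root and the adjoint bookkeeping; the paper's version has the advantage of isolating the operator-norm inequality $\norm{AK_\varepsilon(\lambda)} \leq c_\varepsilon$ as a reusable intermediate step.
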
		

\begin{proof}
    See \cite{RS4}, Chapter XIII, Section 3, Theorem XIII.25, p. 146.
\end{proof}

\begin{definition} \label{def: Kato smooth}
    If $A$ satisfies the properties (i) and (ii) from Proposition \ref{prop: Kato Smoothing}, we say that $A$ is $H_0$-smooth.
    Additionally, the common value of $\sqrt{c_1}$ and $\sqrt{c_2}$ is denoted by $\norm A_{H_0}$.
\end{definition}

From the proof of Proposition \ref{prop: Kato Smoothing}, we also extract the following lemma. 
\begin{lemma} \label{lem: Local Kato Smoothing}
    Let $H_0$ be a self-adjoint operator and $A$ a closed operator.
    For any $\varepsilon > 0$ we have
    \[
        \sup_{\varphi \in \HHilb, \; \abs \varphi = 1} \frac 1 2 \int_{\RR} \diff t \eexp^{-2\varepsilon \abs t} \abs{A \eexp^{-itH_0} \varphi}^2
        \leq	\sup_{\lambda \in \RR, \; \varphi \in \HHilb, \, \abs \varphi = 1} \varepsilon \abs{A (H_0 - \lambda \pm i\varepsilon)^{-1} \varphi}^2.
    \]
\end{lemma}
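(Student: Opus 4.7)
The key idea is to reformulate the left-hand side as a squared $\Ltwo$ norm and apply the Plancherel identity of \Cref{lem: Operator Plancherel} to reduce the estimate to an integral over $\lambda$ of resolvents applied to $\varphi$, which can then be controlled by the operator-norm bound built into the right-hand side. Given $\varphi \in \HHilb$ with $\abs \varphi = 1$, introduce the $\HHilb$-valued function $\psi_\varepsilon(t) := \eexp^{-\varepsilon\abs t}\eexp^{-itH_0}\varphi$; it lies in $\Lonesp{\RR,\HHilb}\cap\Ltwosp{\RR,\HHilb}$, and
\[
    \frac12\int_\RR \eexp^{-2\varepsilon\abs t}\abs{A\eexp^{-itH_0}\varphi}^2\diff t = \frac12\norm{A\psi_\varepsilon}_{\Ltwosp{\RR,\HHilb}}^2.
\]
By \Cref{lem: Operator Plancherel}, this equals $\tfrac12 \int_\RR \abs{A\Fourierhat{\psi_\varepsilon}(p)}^2 \diff p$, transferring the estimate onto the Fourier side.

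Next I would compute $\Fourierhat{\psi_\varepsilon}$ explicitly. Combining the spectral theorem for $H_0$ with the Poisson-kernel identity $\int_\RR \eexp^{-\varepsilon\abs t - it\xi}\diff t = 2\varepsilon/(\varepsilon^2+\xi^2)$ and Fubini, one finds
\[
    \Fourierhat{\psi_\varepsilon}(p) = \frac{2\varepsilon}{\sqrt{2\pi}}\bigl((H_0+p)^2+\varepsilon^2\bigr)^{-1}\varphi.
\]
After substituting $\lambda=-p$ and using the factorisation $\bigl((H_0-\lambda)^2+\varepsilon^2\bigr)^{-1} = (H_0-\lambda-i\varepsilon)^{-1}(H_0-\lambda+i\varepsilon)^{-1}$ (the two resolvents commute), the identity above reads
\[
    \frac12\int_\RR \eexp^{-2\varepsilon\abs t}\abs{A\eexp^{-itH_0}\varphi}^2\diff t = \frac{\varepsilon^2}{\pi}\int_\RR \abs{A(H_0-\lambda-i\varepsilon)^{-1}(H_0-\lambda+i\varepsilon)^{-1}\varphi}^2\diff\lambda.
\]
Denoting by $M$ the right-hand side of the lemma, its definition gives $\norm{A(H_0-\lambda-i\varepsilon)^{-1}}^2 \leq M/\varepsilon$ for every $\lambda \in \RR$; applying this pointwise bound to the vector $(H_0-\lambda+i\varepsilon)^{-1}\varphi$ and then integrating in $\lambda$, the standard spectral identity $\int_\RR\abs{(H_0-\lambda+i\varepsilon)^{-1}\varphi}^2\diff\lambda = \pi/\varepsilon$ makes the two powers of $\varepsilon$ cancel and delivers the target bound $M$. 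Pulling $A$ onto the other resolvent factor yields the symmetric estimate with the $+i\varepsilon$ resolvent instead, covering both signs in the $\pm$.

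The main technical point to watch is the possible unboundedness of $A$. If $M=+\infty$ the inequality is vacuous, so one may assume $M<\infty$; then $A(H_0-\lambda\mp i\varepsilon)^{-1}$ extends to a bounded operator on all of $\HHilb$ with norm at most $\sqrt{M/\varepsilon}$. This boundedness is precisely what legitimises the commutation of $A$ with the Bochner integral defining $\Fourierhat{\psi_\varepsilon}(p)$ and the pointwise operator-norm estimate used in the final step, while the convention of \Cref{lem: Operator Plancherel} (both sides set to $\infty$ when the integrand is ill-defined) takes care of any remaining measurability issues in an intrinsic way.
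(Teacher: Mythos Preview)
Your proof is correct and follows essentially the same route as the paper's: apply the Plancherel identity of \Cref{lem: Operator Plancherel} to $t\mapsto \eexp^{-\varepsilon|t|}\eexp^{-itH_0}\varphi$, identify the Fourier side with resolvents, pull out the operator bound $\norm{A(H_0-\lambda\mp i\varepsilon)^{-1}}^2\le M/\varepsilon$, and evaluate the remaining $\lambda$-integral via the spectral measure. The only cosmetic difference is that the paper factors through the self-adjoint square root $K_\varepsilon(\lambda)$ of $\varepsilon(H_0-\lambda-i\varepsilon)^{-1}(H_0-\lambda+i\varepsilon)^{-1}$ and uses a preliminary duality argument to bound $\norm{AK_\varepsilon(\lambda)}$, whereas you keep the two commuting resolvent factors and bound $A(H_0-\lambda-i\varepsilon)^{-1}$ applied to $(H_0-\lambda+i\varepsilon)^{-1}\varphi$ directly; your version is slightly more streamlined but the substance is identical.
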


\begin{proof}
    The following proof is just an extract of the proof of Proposition \ref{prop: Kato Smoothing}. Since it is instructive, we recall it here for the sake of completeness. \\ 
    We first notice that, for a bounded operator $C$ on $\HHilb$, we have
    \[
        \forall \psi_1 \in \domain{\adjoint A}, \; \forall \psi_2 \in \mathcal H, \quad
        \dotprodparenthesis{\adjoint C \adjoint A \psi_1}{\psi_2} = \dotprodparenthesis{\adjoint A \psi_1}{C \psi_2},
    \]
    from which we conclude that, the following statements are equivalent :
    \begin{itemize}
        \item[a)] $\range C \subset \domain A$ and $\norm{AC} < \infty$
        \item[b)] It holds
        \[
            \sup_{\varphi \in \domain{\adjoint A}, \abs \varphi = 1} \abs{\adjoint C \adjoint A \varphi}
            < \infty.
        \]
    \end{itemize}
    Moreover, whenever a) and b) hold, we have the following equality:
    \[
        \sup_{\varphi \in \domain{\adjoint A}, \abs \varphi = 1} \abs{\adjoint C \adjoint A \varphi}
        =   \norm{AC}.
    \]
    Therefore, taking $C = (H_0 - \lambda - i\varepsilon)^{-1}$, we obtain
    \[
    \begin{aligned}
        c_\varepsilon
        &:= \sup_{\lambda \in \RR,\; \varphi \in \HHilb, \, \, \abs \varphi = 1} \varepsilon \abs{A (H_0 - \lambda - i\varepsilon)^{-1} \varphi}^2 \\
        &=	\sup_{\substack{\lambda \in \RR \\ \varphi \in \domain{\adjoint A}, \, \abs \varphi = 1}} \varepsilon \abs{(H_0 - \lambda + i\varepsilon)^{-1} \adjoint A \varphi}^2	\\
        &=	\sup_{\substack{\lambda \in \RR \\ \varphi \in \domain{\adjoint A}, \, \abs \varphi = 1}}
        \dotprodparenthesis{\varepsilon (H_0 - \lambda - i\varepsilon)^{-1} (H_0 - \lambda + i\varepsilon)^{-1} \adjoint A \varphi}{\adjoint A \varphi}.
    \end{aligned}
    \]
   The operator $\varepsilon (H_0 - \lambda - i\varepsilon)^{-1} (H_0 - \lambda + i\varepsilon)^{-1} = (2i)^{-1} ((H_0 - \lambda - i\varepsilon)^{-1} - (H_0 - \lambda + i\varepsilon)^{-1})$ being bounded, self-adjoint and positive, we can consider its square root $K_{\varepsilon}(\lambda)$, and the previous equality is equivalent to
    \[
        \sup_{\substack{\lambda \in \RR \\ \varphi \in \domain{\adjoint A}, \, \abs \varphi = 1}}
        \abs{K_{\varepsilon}(\lambda) \adjoint A \varphi}^2	\\
        =  \sup_{\lambda \in \RR, \, \varphi\in \HHilb,\, \abs \varphi = 1} \varepsilon \abs{A (H_0 - \lambda - i\varepsilon)^{-1} \varphi}^2
        =	c_\varepsilon^2,
    \]
    hence, for any $\lambda \in \RR$ $\range{K_\varepsilon(\lambda)} \subset \domain A$ and
    \[
        \norm{A K_\varepsilon(\lambda)}
        =   c_\varepsilon
    \]
    
    We now observe that
    \[
        -i (H_0 - \lambda - i\varepsilon)^{-1}
        =	\int_0^\infty \diff t \eexp^{- \varepsilon t} \eexp^{i\lambda t} \eexp^{-itH_0}.
    \]
    Using a similar identity for $(H_0 - \lambda + i\varepsilon)^{-1}$ and applying Lemma \ref{lem: Operator Plancherel} and the resolvent identity, we obtain, for all $\varphi\in \HHilb$,
    \[
    \begin{aligned}
        \int_\RR \diff t \eexp^{-2\varepsilon \abs t} \abs{A \eexp^{-itH_0} \varphi}^2
        &=	(2 \pi)^{-1}  \int_\RR \diff \lambda \abs{A((H_0 - \lambda - i\varepsilon)^{-1} - (H_0 - \lambda + i\varepsilon)^{-1}) \varphi}^2	\\
        &=	\frac 2 \pi \int_\RR \diff \lambda \abs{A K_\varepsilon(\lambda)^2 \varphi}^2	\\
        &\leq	\frac 2 \pi c_\varepsilon^2 \int_\RR \diff \lambda \abs{K_\varepsilon(\lambda) \varphi}^2	\\
        &\leq	\frac 2 \pi \frac{c_\varepsilon^2}{2i} \int_\RR \diff \lambda \dotprodparenthesis{((H_0 - \lambda - i\varepsilon)^{-1} - (H_0 - \lambda + i\varepsilon)^{-1}) \varphi}{\varphi}.
    \end{aligned}
    \]
    Denoting by $\diff{\mu_\varphi}$ the spectral measure of $\varphi$ for $H$, we have
    \[
    \begin{aligned}
        &\frac 2 \pi \frac{1}{2i} \int_\RR \diff \lambda \dotprodparenthesis{((H_0 - \lambda - i\varepsilon)^{-1} - (H_0 - \lambda + i\varepsilon)^{-1}) \varphi}{\varphi}	\\
        &=	\frac 2 \pi \int_\RR \diff \lambda \int_\RR \diff{\mu_\varphi}(x)
        \frac{\varepsilon}{(x - \lambda)^2 + \varepsilon^2}	\\
        &=	\frac 2 \pi \int_\RR \diff{\mu_\varphi}(x) \int_\RR \diff \lambda
        \frac{\varepsilon}{(x - \lambda)^2 + \varepsilon^2}	\\
        &=	2\int_\RR \diff{\mu_\varphi}(x) = 2 \abs{\varphi}^2,
    \end{aligned}
    \]
    and the result follows.
\end{proof}

\subsection{Global space-time formulation in $H^1(I, \HHilb)$} \label{subsec: Abstract setting}

The following theorem states an abstract condition on a couple of operators $H_0$ and $A$ on $\HHilb$ which ensures that $\XX_{H_0+A} = \XX_{H_0}$. The proof is postponed to \Cref{subsec: Proof main results}.

\medskip

More precisely, we make the following set of assumptions on the operators $H_0$ and $A$.

\medskip

\noindent {\bfseries Assumptions (A):} 
\begin{itemize}
    \item [(A1)] The operator $H_0$ is a self-adjoint operator on $\HHilb$.
    \item [(A2)] The operator $A$ is a closed symmetric operator on $\HHilb$ such that $D(H_0) \subset D(A)$.
    \item [(A3)] There exists some $\varepsilon > 0$ such that
    \begin{equation}
        \sup_{\lambda \in \RR} \norm{A (H_0 - \lambda \pm i \varepsilon)^{-1}} < 1.
    \end{equation}
\end{itemize} 

\begin{lemma}
    Let $H_0$ and $A$ be operators on $\HHilb$ satisfying (A), and $\fctshortdef{B}{\setclosure I \ni t}{B(t)}$ be a strongly continuous family of bounded self-adjoint operator on $\HHilb$. Then, for any $t \in \setclosure I$, ${H(t) = H_0 + A + B(t)}$ defined on $\domain{H(t)} := \domain{H_0}$ is self-adjoint.
\end{lemma}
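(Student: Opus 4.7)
The plan is to reduce the statement to the Kato--Rellich theorem. Since $B(t)$ is bounded and self-adjoint, adding it to a self-adjoint operator with a fixed domain preserves self-adjointness (Kato--Rellich with relative bound zero). The real content is therefore the self-adjointness of $H_0+A$ on $\domain{H_0}$, which will follow from assumption (A3) once it is translated into a Kato--Rellich style relative bound.

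First, I would unpack (A3). Let $c < 1$ be such that $\norm{A(H_0 - \lambda \pm i\varepsilon)^{-1}} \leq c$ for every $\lambda \in \RR$. Taking $\lambda = 0$ and applying the operator to $(H_0 \pm i\varepsilon) \psi$ for $\psi \in \domain{H_0}$, I get
\begin{equation*}
    \abs{A\psi} \leq c\, \abs{(H_0 \pm i\varepsilon)\psi} \leq c \abs{H_0 \psi} + c\varepsilon \abs{\psi}.
\end{equation*}
This is exactly the Kato--Rellich condition: $A$ is $H_0$-bounded with relative bound $c < 1$. By (A2), $\domain{H_0} \subset \domain{A}$, so this inequality makes sense on the whole of $\domain{H_0}$.

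Next I would invoke the Kato--Rellich theorem (Reed--Simon II, Theorem X.12, or the analogue in \cite{RS4}): if $H_0$ is self-adjoint and $A$ is symmetric and $H_0$-bounded with relative bound strictly less than $1$, then $H_0 + A$ is self-adjoint on $\domain{H_0}$ (and essentially self-adjoint on any core of $H_0$). Combined with (A1) and (A2), this yields that $H_0 + A$ is self-adjoint with domain $\domain{H_0}$.

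Finally, for any $t \in \setclosure{I}$, $B(t)$ is bounded and self-adjoint, so it is $(H_0+A)$-bounded with relative bound zero. A second application of Kato--Rellich gives that $H(t) = (H_0 + A) + B(t)$ is self-adjoint on $\domain{H_0 + A} = \domain{H_0}$, which is the claim. The only subtle point is verifying that the resolvent bound in (A3) yields a relative bound $<1$ in the correct sense; this reduces to the simple substitution above, so I do not expect any real obstacle.
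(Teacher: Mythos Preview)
Your proposal is correct and essentially identical to the paper's proof: both take $\lambda=0$ in (A3) to get $\abs{A\psi}\leq c\abs{(H_0+i\varepsilon)\psi}\leq c\abs{H_0\psi}+c\varepsilon\abs{\psi}$ with $c<1$, invoke Kato--Rellich for $H_0+A$, and then observe that adding the bounded self-adjoint $B(t)$ preserves self-adjointness on the same domain.
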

\begin{proof}
    By (A3), we have $a = \norm{A(H_0 + i\varepsilon)^{-1}} < 1$.
    Let $\varphi \in \domain{H_0}$, then we have
    \[
    \begin{aligned}
        \abs{A \varphi}
        &=  \abs{A(H_0 + i\varepsilon)^{-1} (H_0 + i\varepsilon) \varphi}
        \leq    a \abs{(H_0 + i\varepsilon) \varphi}    \\
        &\leq   a\abs{H_0 \varphi} + a \abs \varepsilon \abs{\varphi},
    \end{aligned}
    \]
    i.e. $A$ is $H_0$-bounded with relative bound striclty smaller than $1$.
    Since, by (A2), $A$ is symmetric, the Kato-Rellich theorem implies that $H_0 + A$ is self-adjoint on $\domain{H_0}$.
    Since $B(t)$ is a bounded self-adjoint operator, the same statement holds for $H(t)$.
\end{proof}

\begin{remark} \label{rem: A bounded resolvent assumption}
    If $H_0$ is such that assumption (A1) is satisfied, we can already identify two cases for which assumptions (A2) and (A3) are satisfied:
    \begin{itemize}
        \item[i)] the operator $A$ is a bounded self-adjoint operator: as we have $\domain{H_0} \subset \HHilb = \domain A$ and
        \[
            \sup_{\lambda \in \RR} \norm{A (H_0 - \lambda \pm i \varepsilon)^{-1}}
            \leq	\norm A \norm{(H_0 - \lambda \pm i \varepsilon)^{-1}}
            \leq	\frac{\norm A}{\abs \varepsilon}
            \xrightarrow[\abs \varepsilon \to \infty]{}	0;
        \]

        \item[ii)] the operator $A$ is closed symmetric and $H_0$-smooth (in the sense of Definition \ref{def: Kato smooth}): as these properties ensure that $\domain{H_0} \subset \domain A$ (see Proposition~\ref{prop: Kato Smoothing}) and
        \[
            \sup_{\lambda \in \RR} \norm{A (H_0 - \lambda \pm i \varepsilon)^{-1}}
            \leq	\sqrt{\frac{c_2}{\varepsilon}}
            \xrightarrow[\abs \varepsilon \to \infty]{}	0,
        \]
        where $c_2$ is the constant appearing in Proposition \ref{prop: Kato Smoothing}.
    \end{itemize}
\end{remark}

We then have the following theorem, which is one of of our key results:
\begin{theorem} \label{th: XX0 XX equality}
    Let $H_0$ and $A$ be operators on $\HHilb$ satisfying (A). Set $H = H_0 + A$. 
    \begin{itemize}
    \item[(i)]{\bfseries Time-independent operator case:} It holds that $\XX_H = \XX_{H_0}$ and there exist constants $\alpha, C > 0$ independent of $T$ such that
    \begin{equation} \label{equ: XX0 XX norm equivalence}
        \forall u \in \XX_{H_0}, \quad
        \frac{\alpha}{1 + T} \norm u_{\XX_{H_0}} \leq \norm u_{\XX_H} \leq C (1 + T) \norm u_{\XX_{H_0}}.
    \end{equation}
    
    \item[(ii)]{\bfseries Time-dependent operator case:} Let $\fctshortdef{B}{t \in \setclosure I}{B(t)}$ be a strongly continuous family of bounded self-adjoint operators on $\HHilb$.
    The map $N: \XX_{H_0} \to \mathbb{R}_+$ defined by
    \begin{equation}
        \forall u \in \XX_{H_0}, \quad
        N(u) = \left( \abs{u(0)}^2 + T \norm{(i\partial_t - H - B(t)) u}_{\Ltwosp{I, \HHilb}}^2 \right)^{\frac 1 2},
    \end{equation}
    defines a norm on $\XX_{H_0}$, and there exist constants $\alpha, C> 0$ independent of $T$ such that
    \begin{equation} \label{equ: XX0 XXt norm equivalence}
        \forall u \in \XX_{H_0}, \quad
        \frac{\alpha}{(1 + T)^2} \norm u_{\XX_{H_0}} \leq N(u) \leq C (1 + T) \norm u_{\XX_{H_0}}.
    \end{equation}
    \end{itemize}
\end{theorem}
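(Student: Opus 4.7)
I would derive both parts of the theorem by combining the Duhamel representation of Proposition~\ref{prop: XX properties} with a localized Kato-smoothing-type estimate that follows from assumption~(A3). The main preliminary step is to establish, for every $\varphi \in \HHilb$,
\[
\int_0^T \abs{A \eexp^{-itH_0}\varphi}^2 \diff t \leq C(1+T) \abs{\varphi}^2,
\]
where $C$ depends only on the $\varepsilon$ from (A3) and on $a := \sup_\lambda \norm{A(H_0-\lambda\pm i\varepsilon)^{-1}}$. The main obstacle is obtaining polynomial rather than exponential growth in $T$: a naive use of Lemma~\ref{lem: Local Kato Smoothing} produces an unwanted factor $\eexp^{2\varepsilon T}$. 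The remedy is to partition $[0,T]$ into $\lceil T\varepsilon\rceil$ subintervals of length $1/\varepsilon$, and on each piece $[k/\varepsilon,(k+1)/\varepsilon]$ to perform the change of variable $t = k/\varepsilon + s$ and use the unitarity of $\eexp^{-itH_0}$ to replace $\varphi$ by $\psi_k := \eexp^{-ik H_0/\varepsilon}\varphi$ (with $\abs{\psi_k}=\abs{\varphi}$). On the short interval $[0,1/\varepsilon]$ the weight $\eexp^{-2\varepsilon s}$ of Lemma~\ref{lem: Local Kato Smoothing} stays $\geq \eexp^{-2}$, which produces an $O(1)$ per-piece bound; summation over the $O(T\varepsilon)$ pieces yields the key estimate.

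For part~(i), the forward inclusion $\XX_{H_0}\subset \XX_H$ then follows by fixing $u \in \XX_{H_0}$ with $u_0 = u(0)$ and $g = (i\partial_t - H_0)u$, and using the Duhamel formula to split
\[
A u(t) = A \eexp^{-itH_0}u_0 - i \int_0^t A \eexp^{-i(t-s)H_0} g(s) \diff s.
\]
The first summand is controlled directly by the key estimate, while Cauchy--Schwarz in $s$, Fubini in $(s,t)$ and the substitution $\tau = t-s$ reduce the second to another application of the key estimate on $g(s)$, at the cost of an additional factor $T$. This gives $\norm{Au}_{\Ltwosp{I,\HHilb}}^2 \leq C(1+T)\norm u_{\XX_{H_0}}^2$, and the triangle inequality applied to $(i\partial_t - H)u = g - Au$ delivers the upper bound in \eqref{equ: XX0 XX norm equivalence}. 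The reverse inclusion is obtained by the symmetric argument with $H$ replacing $H_0$: the required input is $\sup_\lambda \norm{A(H-\lambda\pm i\varepsilon)^{-1}} < \infty$, which follows from the identity $A(H-\mu)^{-1}(I + A(H_0-\mu)^{-1}) = A(H_0-\mu)^{-1}$ and the Neumann inversion of $I + A(H_0-\mu)^{-1}$, made possible by $\norm{A(H_0-\mu)^{-1}}<1$ from~(A3).

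For part~(ii), the bounded perturbation $B(t)$ is handled separately. The upper bound $N(u) \leq C(1+T)\norm u_{\XX_{H_0}}$ follows from $\norm{B(t)u}_{\Ltwosp{I,\HHilb}}^2 \leq 2T \sup_t \norm{B(t)}^2 \norm u_{\XX_{H_0}}^2$ via the embedding $\XX_{H_0}\hookrightarrow \Czerosetsp{\setclosure I,\HHilb}$ from Proposition~\ref{prop: XX properties}, combined with the bound on $\norm{Au}_{\Ltwosp{I,\HHilb}}$ from Part~(i). For the lower bound, the naive triangle inequality fails for large $T$; instead, I would use the conservation-type identity $\frac{d}{dt}\abs{u(t)}^2 = 2\,\mathrm{Im}\,\dotprodbracket{f(t)}{u(t)}$ with $f := (i\partial_t - H - B(t))u$, which follows from the self-adjointness of $H+B(t)$. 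Integration yields $\abs{u(t)}^2 \leq \abs{u(0)}^2 + 2\norm f_{\Ltwosp{I,\HHilb}}\norm u_{\Ltwosp{I,\HHilb}}$, and a quadratic-inequality argument then gives $\norm u_{\Czerosetsp{\setclosure I,\HHilb}}^2 \leq 2\abs{u(0)}^2 + 4T\norm f_{\Ltwosp{I,\HHilb}}^2$. Hence $T\norm{B(t) u}_{\Ltwosp{I,\HHilb}}^2 \leq C(1+T^2) N(u)^2$, and the triangle inequality on $(i\partial_t-H)u = f + B(t)u$ yields $\norm u_{\XX_H} \leq C(1+T)N(u)$. Composing with the lower bound of Part~(i) produces \eqref{equ: XX0 XXt norm equivalence}.
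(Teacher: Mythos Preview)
Your proposal is correct and follows essentially the same route as the paper. The key localized smoothing estimate via partitioning $[0,T]$ into $O(T\varepsilon)$ pieces is exactly Proposition~\ref{prop: Kato Smoothing cheap}; the resolvent identity and Neumann inversion for the reverse inclusion are identical; and your conservation argument $\frac{d}{dt}\abs{u(t)}^2 = 2\,\mathrm{Im}\,\dotprodbracket{f(t)}{u(t)}$ in Part~(ii) is precisely the content of Proposition~\ref{prop: Weak solutions time dependent} (which the paper invokes as a black box). The only cosmetic difference is that in Part~(i) you estimate the Duhamel integral term via Cauchy--Schwarz in $s$ followed by Fubini, whereas the paper uses Minkowski's integral inequality to pull the $s$-integral outside the $L^2_t$ norm; both yield $\norm{Au}_{\Ltwosp{I,\HHilb}} \leq C\sqrt{1+T}\,\norm{u}_{\XX_{H_0}}$ and lead to the same final bound.
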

\begin{proof}
    See \Cref{subsubsec: Proof theorem XX}.
\end{proof}

The main advantage of Theorem \ref{th: XX0 XX equality} is that the space $\XX_{H_0}$ can be characterized by the evolution group $\eexp^{-itH_0}$. As a corollary, it is possible to use this property to reformulate a global space-time formulation for the time-dependent Schrödinger operator defined on the Hilbert space $\Honesp{I, \HHilb}$.
\begin{corollary} \label{cor: Variational formulation v}
    Let $H_0$ and $A$ be operators on $\HHilb$ satisfying (A). Set $H = H_0 + A$.
    Let $\fctshortdef{B}{t \in \setclosure I}{B(t)}$ be a strongly continuous family of bounded self-adjoint operators on $\HHilb$.
    Let $u_0 \in \HHilb$ and $f \in\Ltwosp{I, \HHilb}$. Let $u^* \in \XX_{H_0}$ be the solution to \eqref{equ: Schrodinger evolution} (in the sense of Definition \ref{def: definition weak solution}), and $\fctshortdef{v^*}{t \in I}{\eexp^{itH_0} u^*(t)}$.
    
    Then, the functional $F$ defined in \eqref{equ: F definition} is well-defined and quadratic. Furthermore, there exist constants $C, \alpha > 0$ independent of $T$ such that
    \begin{equation} \label{equ: Variational v well posedness}
        \forall v \in \Honesp{I, \HHilb}, \;
        \frac{\alpha}{(1+T)^\gamma} \norm{v - v^*}_{\Honesp{I, \HHilb}}
        \leq	\sqrt{F(v)}
        \leq	C(1+T) \norm{v - v^*}_{\Honesp{I, \HHilb}},
    \end{equation}
    with $\gamma = 1$ if $B=0$, and $\gamma = 2$ otherwise.

    In particular, it follows that $v^* \in \Honesp{I, \HHilb}$ is the unique solution to the minimization problem \eqref{equ: Min problem F}.
\end{corollary}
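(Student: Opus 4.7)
The plan is to pull back the norm equivalence of Theorem~\ref{th: XX0 XX equality}(ii) via the isomorphism $u \mapsto e^{itH_0} u$ from $\XX_{H_0}$ onto $\Honesp{I, \HHilb}$ supplied by Proposition~\ref{prop: XX properties}. First I would verify that $F(v)$ is well defined for every $v \in \Honesp{I, \HHilb}$: setting $u := e^{-itH_0} v$, Proposition~\ref{prop: XX properties} gives $u \in \XX_{H_0}$, Theorem~\ref{th: XX0 XX equality}(i) provides $\XX_{H_0} = \XX_H$, hence $(i\partial_t - H) u \in \Ltwosp{I, \HHilb}$. Writing $A u = (i\partial_t - H_0) u - (i\partial_t - H) u$ shows $A u \in \Ltwosp{I, \HHilb}$, and since $B(\cdot)$ is uniformly bounded on $\setclosure I$ by strong continuity, also $(A + B(t)) u \in \Ltwosp{I, \HHilb}$. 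Unitarity of $e^{itH_0}$ then yields $e^{itH_0}(A + B(t)) e^{-itH_0} v \in \Ltwosp{I, \HHilb}$, so each summand of $F(v)$ is meaningful.

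The crux is the algebraic identity, valid for $v = e^{itH_0} u$ with $u \in \XX_{H_0}$:
\[
    (i\partial_t - e^{itH_0}(A + B(t)) e^{-itH_0}) v - e^{itH_0} f
    =   e^{itH_0} \bigl[ (i\partial_t - H - B(t)) u - f \bigr].
\]
I would establish this first on the dense subspace $\mathcal W^\infty_{H_0}$ by a direct computation (using $i\partial_t(e^{itH_0} u) = e^{itH_0}(i\partial_t - H_0) u$ pointwise), and extend it to all of $\XX_{H_0}$ by density, since both sides are continuous for $\norm \cdot_{\XX_{H_0}}$. Taking $\Ltwosp{I, \HHilb}$ norms, using the unitarity of $e^{itH_0}$, and noting $v(0) = u(0)$, yields
\[
    F(v) = \abs{u(0) - u_0}^2 + T \norm{(i\partial_t - H - B(t)) u - f}_{\Ltwosp{I, \HHilb}}^2.
\]
Since $u^*(0) = u_0$ and $(i\partial_t - H - B(t)) u^* = f$, this rewrites as $F(v) = N(u - u^*)^2$, which exhibits $F$ as a quadratic functional of $v$ (shifted so that $v^*$ is its unique zero).

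It remains to chain three equivalences to reach \eqref{equ: Variational v well posedness}. Theorem~\ref{th: XX0 XX equality}(ii) gives $\tfrac{\alpha}{(1+T)^2} \norm{u - u^*}_{\XX_{H_0}} \leq N(u - u^*) \leq C(1+T) \norm{u - u^*}_{\XX_{H_0}}$, with the $(1+T)^2$ replaced by $(1+T)$ when $B = 0$ via Theorem~\ref{th: XX0 XX equality}(i) applied to $H$ itself (whence $\gamma \in \{1, 2\}$). On the other hand, for $v = e^{itH_0} u$ unitarity gives $\norm u_{\XX_{H_0}}^2 = \abs{v(0)}^2 + T \norm{\partial_t v}_{\Ltwosp{I, \HHilb}}^2$, and standard one-dimensional Sobolev estimates on $I = (0,T)$,
\[
    \abs{v(0)}^2 \leq \tfrac{2}{T} \norm{v}_{\Ltwosp{I, \HHilb}}^2 + 2T \norm{\partial_t v}_{\Ltwosp{I, \HHilb}}^2,
    \qquad
    \norm{v}_{\Ltwosp{I, \HHilb}}^2 \leq 2T \abs{v(0)}^2 + T^2 \norm{\partial_t v}_{\Ltwosp{I, \HHilb}}^2,
\]
show that $\norm u_{\XX_{H_0}}^2$ and $\norm v_{\Honesp{I, \HHilb}}^2$ are equivalent with constants of order $\max(1/T, T)$. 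These factors can be absorbed into the $(1+T)$ and $(1+T)^\gamma$ powers by enlarging the constants $\alpha, C$, which remain independent of $T$.

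Once \eqref{equ: Variational v well posedness} is established, the last assertion is immediate: the lower bound forces $F(v) = 0 \Leftrightarrow v = v^*$, and the two-sided estimate identifies $\sqrt F$ as a quantity equivalent to $\norm{\cdot - v^*}_{\Honesp{I, \HHilb}}$, so $v^*$ is the unique minimizer. The principal obstacle will be the careful bookkeeping of $T$-powers when combining the Sobolev estimates above with Theorem~\ref{th: XX0 XX equality} to obtain precisely the exponents $(1+T)^\gamma$ and $(1+T)$ announced in the statement; everything else is a routine consequence of unitarity and the isomorphism of Proposition~\ref{prop: XX properties}.
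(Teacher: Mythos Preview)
Your approach is essentially the paper's: establish the identity $F(v) = N(u-u^*)^2$ via the conjugation relation $(i\partial_t - e^{itH_0}(A+B(t))e^{-itH_0})v - e^{itH_0}f = e^{itH_0}[(i\partial_t - H - B(t))u - f]$, then invoke Theorem~\ref{th: XX0 XX equality}. That part is correct.

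The difference is in the final step. The paper simply asserts $\norm{u-u^*}_{\XX_{H_0}} = \norm{v-v^*}_{\Honesp{I,\HHilb}}$ as an \emph{equality}: the norm on $\Honesp{I,\HHilb}$ is implicitly taken to be the one transported from $\XX_{H_0}$ by the isometry of Proposition~\ref{prop: XX properties}, namely $\norm{v}^2 = \abs{v(0)}^2 + T\norm{\partial_t v}_{L^2}^2$. With this convention the inequalities of Theorem~\ref{th: XX0 XX equality} transfer verbatim to $\Honesp{I,\HHilb}$ and the stated exponents drop out immediately. Your detour through the standard $H^1$ norm and the one-dimensional Sobolev inequalities is unnecessary, and in fact your closing claim that ``these factors can be absorbed into the $(1+T)$ and $(1+T)^\gamma$ powers'' does not hold as written: the equivalence constants between $\abs{v(0)}^2 + T\norm{\partial_t v}_{L^2}^2$ and $\norm{v}_{L^2}^2 + \norm{\partial_t v}_{L^2}^2$ are genuinely of order $T$ for large $T$, which would push the exponents to roughly $(1+T)^{\gamma+1/2}$ and $(1+T)^{3/2}$. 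So either adopt the paper's norm convention (in which case the last paragraph of your argument collapses to a single line), or accept slightly worse exponents than stated.
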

\begin{proof}
    See \Cref{subsubsec: Proof of cor v}
\end{proof}

\begin{remark}
    In physics, the function $v^*$ corresponds to the \emph{interaction picture} or \emph{Dirac picture} as opposed to $u^*$ which is called the \emph{Schrödinger picture}.
    In our case, the motivation behind the introduction of this $v^*$ lies in the particular dynamics that it follows, which can be understood as the dynamics of the real solution $u^*$ preconditionned by the ``well-known" free dynamics associated with $H_0$, and allows us to work with $\Honesp{I, \HHilb}$ instead of the space of mixed regularity $\XX_H$.
    Similar reasons motivate the introduction of the minimization problem \eqref{equ: Min problem F} in place of \eqref{equ: Min problem E}, the price to pay being that the operator $H + B(t)$ is replaced by the ``skewed" operator $\eexp^{itH_0}(A + B(t)) \eexp^{-itH_0}$.
\end{remark}

The global space-time numerical scheme we propose is simply a Galerkin method which consists in computing an approximation $v^{*,n}\in V_n \subset \Honesp{I, \HHilb}$ of $v^*$ solution to 
$$
v^{*,n} = \mathop{{\rm argmin}}_{v_n\in V_n}F(v_n). 
$$
Using Céa's lemma and Corollary~\ref{cor: Variational formulation v}, we then easily obtain that
$$
\| v^* - v^{*,n}\|_{H^1(I, \HHilb)} \leq \frac{C (1+T)^{\gamma +1}}{\alpha} \mathop{\inf}_{v_n \in V_n} \|v^* - v_n \|_{H^1(I, \HHilb)},
$$
which in turn implies that
$$
\| u^* - u^{*,n}\|_{\XX_{H_0}} \leq \frac{C (1+T)^{\gamma +1}}{\alpha} \mathop{\inf}_{u_n \in U_n} \|u^* - u_n \|_{\XX_{H_0}},
$$
where $u^{*,n} = \eexp^{-itH_0} v^{*,n}$ and $U_n = \eexp^{-itH_0}V_n$. For this numerical scheme to be practical, it is important to choose $V_n$ so that elements of the form $\eexp^{itH_0}v_n$ could be efficiently computed for any $v_n\in V_n$. We will give some examples of practical choices of discretization spaces $V_n$ in the following sections.

Some remarks are in order here.
\begin{remark}
    One can prove directly, as a consequence of estimate \eqref{equ: XX continity estimate time dependent} in Proposition \ref{prop: Weak solutions time dependent}, that
    \begin{equation}
        \forall v \in \Honesp{I, \HHilb}, \quad
        \norm{v - v^*}_{\Czerosetsp{I, \HHilb}}
        \leq	\sqrt{2 F(v)}.
    \end{equation}
    This estimate is actually better than the ``naive'' estimate
    \[
        \norm{v - v^*}_{\Czerosetsp{I, \HHilb}} \leq \sqrt 2 \norm{v - v^*}_{\Honesp{I, \HHilb}} \leq \frac{(1+T)}{\alpha} \sqrt{2 F(v)},
    \]
    and can be used to compute a posteriori error estimates.
\end{remark}
\begin{remark}
    Given a set $\Sigma \subset \HHilb$, and $\tilde v$ the solution to the problem
    \[
        \tilde v \in \argmin_{v \in \Honesp{I, \Sigma}} F(v),
    \]
    it follows from \eqref{equ: Variational v well posedness} that $\tilde v$ satisfies the ``Céa-type'' estimate
    \[
        \norm{\tilde v - v^*}_{\Honesp{I, \HHilb}}
        \leq    \frac{C (1+T)^{\gamma+1}}{\alpha} \min_{v \in \Honesp{I, \Sigma}} \norm{v - v^*}_{\Honesp{I, \HHilb}}.
    \]
    For instance, if $\Sigma$ is a low-rank subset (or any other approximation subset), then $\tilde v$ is a quasi-optimal approximation of $v^*$ in $\Honesp{I, \Sigma}$, and the dependency of the quasi-optimality constants on $T$ is polynomial.
    This should be compared with the error estimates that typically arise for this kind of approximation.
    For instance, time-stepping procedures on low-rank manifolds only yield results that are provably quasi-optimal, with a constant that grows exponentially in time \cite[Theorem 27]{bachmayr_iterative_2025}, \cite[Theorem 5.1]{koch2007dynamical}.
\end{remark}

\medskip

    If we make the additional assumption that $A$ is $H_0$-smooth in the sense of Definition~\ref{def: Kato smooth}, we have the following result which is a slight improvement of estimate \eqref{equ: XX0 XX norm equivalence}.

    \begin{theorem} \label{th: XX XX0 equality improved}
        Assume that $H_0$ and $A$ satisfy the set of assumptions (A). Let us assume in addition that $A$ is $H_0$-smooth. Then the operator $H = H_0 + A$ on $\HHilb$ is self-adjoint with $\domain{H} = \domain{H_0}$ and we have $\XX_H = \XX_{H_0}$. Besides, there exist constants $\alpha, C > 0$ such that
        \begin{equation} \label{equ: XX0 XX norm equivalence improved}
            \forall u \in \XX_{H_0}, \quad
            \frac{\alpha}{1 + T} \norm u_{\XX_{H_0}} \leq \norm u_{\XX_H} \leq C \sqrt{1 + T} \norm u_{\XX_{H_0}}.
        \end{equation}
    \end{theorem}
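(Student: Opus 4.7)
The set equality $\XX_H = \XX_{H_0}$ and the lower bound in~\eqref{equ: XX0 XX norm equivalence improved} are already established by Theorem~\ref{th: XX0 XX equality}(i), so all that remains is to prove the improved upper bound $\norm u_{\XX_H} \leq C \sqrt{1+T}\,\norm u_{\XX_{H_0}}$. The strategy is to exploit the $H_0$-smoothness assumption to obtain the sharper estimate $\norm{Au}_{\Ltwosp{I, \HHilb}} \leq C \norm u_{\XX_{H_0}}$ with a constant independent of $T$, improving on what can be extracted from assumption (A) alone.

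By Proposition~\ref{prop: XX properties}, any $u \in \XX_{H_0}$ can be written $u(t) = \eexp^{-itH_0} v(t)$ with $v \in \Honesp{I, \HHilb}$, satisfying $\norm u_{\XX_{H_0}}^2 = \abs{v(0)}^2 + T \norm{v'}_{\Ltwosp{I, \HHilb}}^2$. Writing $v(t) = v(0) + \int_0^t v'(s)\,ds$ and passing $A$ inside the Bochner integral (justified below using closedness of $A$) leads to the decomposition
$$ Au(t) = A \eexp^{-itH_0} v(0) + \int_0^t A \eexp^{-itH_0} v'(s)\,ds. $$
The key ingredient is the uniform-in-$T$ bound from Kato smoothing (Proposition~\ref{prop: Kato Smoothing}): for any $\varphi \in \HHilb$, $\int_\RR \norm{A \eexp^{-itH_0} \varphi}^2\,dt \leq 2\pi\norm{A}_{H_0}^2\abs{\varphi}^2$.

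Applied with $\varphi = v(0)$, and restricting the integral to $I$, this estimate controls the first term by $\sqrt{2\pi}\norm{A}_{H_0}\abs{v(0)}$. For the second term, I would use Minkowski's integral inequality to get
$$ \Bigl\| \int_0^t A \eexp^{-itH_0} v'(s)\,ds \Bigr\|_{\Ltwosp{I, \HHilb}} \leq \int_0^T \Bigl( \int_s^T \norm{A \eexp^{-itH_0} v'(s)}^2 \,dt \Bigr)^{1/2} ds, $$
bound the inner time-integral by $2\pi \norm{A}_{H_0}^2 \abs{v'(s)}^2$ via Kato smoothing with $\varphi = v'(s)$, and then apply Cauchy--Schwarz in $s$ to produce precisely a $\sqrt T$ factor. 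Combining the two contributions yields $\norm{Au}_{\Ltwosp{I, \HHilb}} \leq \sqrt{2\pi}\norm{A}_{H_0}\bigl(\abs{v(0)} + \sqrt{T}\,\norm{v'}_{\Ltwosp{I, \HHilb}}\bigr) \leq C \norm u_{\XX_{H_0}}$ with $C = 2\sqrt{\pi}\norm{A}_{H_0}$.

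The upper bound then follows by expanding $(i\partial_t - H)u = (i\partial_t - H_0)u - Au$, applying the triangle inequality, and using $T\norm{(i\partial_t - H_0)u}_{\Ltwosp{I, \HHilb}}^2 \leq \norm u_{\XX_{H_0}}^2$, which gives $\norm u_{\XX_H}^2 \leq 3\norm u_{\XX_{H_0}}^2 + 2TC^2 \norm u_{\XX_{H_0}}^2 \leq C'(1+T)\norm u_{\XX_{H_0}}^2$. The main technical subtlety lies in justifying the interchange of $A$ and the Bochner integral in the decomposition of $Au$, since $\eexp^{-itH_0}v(0)$ and $\eexp^{-itH_0}v'(s)$ need not lie in $\domain A$ pointwise; this is handled by a standard density argument using $\mathcal W_{H_0}^\infty$ (dense in $\XX_{H_0}$), on which the decomposition is immediate, followed by an extension by continuity using the very $\Ltwosp{I, \HHilb}$ bound just established together with closedness of $A$.
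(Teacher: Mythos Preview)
Your proposal is correct and follows essentially the same route as the paper: the paper's proof simply observes that when $A$ is $H_0$-smooth the factor $\sqrt{1+T}$ in the bound $\norm{Au}_{\Ltwosp{I,\HHilb}} \leq C\sqrt{1+T}\norm u_{\XX_{H_0}}$ (obtained in the proof of Theorem~\ref{th: XX0 XX equality} via Proposition~\ref{prop: Kato Smoothing cheap}) can be removed by using the uniform Kato-smoothing estimate of Proposition~\ref{prop: Kato Smoothing} instead, and then feeds this into the same triangle-inequality chain. Your write-up spells out explicitly the Minkowski/Cauchy--Schwarz step and the density justification for interchanging $A$ with the Bochner integral, which the paper leaves implicit.
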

    \begin{proof}
        See \Cref{subsubsec: Proof of theorem XX improved}.
    \end{proof}

\subsection{Proofs of theoritical results of Section~\ref{subsec: Abstract setting}} \label{subsec: Proof main results}

We first state a proposition that we will use in the proofs of the results of the previous section.
\begin{proposition} \label{prop: Kato Smoothing cheap}
Assume that $H_0$ and $A$ satisfy the set of assumptions (A). 
    Then, the operator $H = A + H_0$ is self-adjoint on $\HHilb$ with domain $\domain{H} = \domain{H_0}$, and there exists $C \geq 0$ such that
    \begin{equation}
        \forall T > 0, \quad \forall \varphi \in \HHilb, \quad
        \int_{-T}^T \diff t \abs{A \eexp^{-itH_0} \varphi}^2
        \leq	C (1 + T) \abs \varphi^2,
    \end{equation}
    \begin{equation}
        \forall T > 0, \quad \forall \varphi \in \HHilb, \quad
        \int_{-T}^T \diff t \abs{A \eexp^{-itH} \varphi}^2
        \leq	C (1 + T) \abs \varphi^2.
    \end{equation}
\end{proposition}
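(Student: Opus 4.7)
The plan is to split the statement into three pieces: the self-adjointness of $H$, the bound involving $\eexp^{-itH_0}$, and the bound involving $\eexp^{-itH}$. The common strategy for the two integral estimates is to first obtain a weighted $L^2(\RR)$ bound via \Cref{lem: Local Kato Smoothing}, and then convert it to an unweighted bound on $(-T,T)$ with constant linear in $T$ by tiling $[-T, T]$ with intervals of length of order $1/\varepsilon$ and exploiting the isometric action of the evolution group.

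For self-adjointness, I would argue exactly as in the lemma preceding \Cref{th: XX0 XX equality}: by (A3), there exists $\varepsilon > 0$ such that $a := \sup_{\lambda \in \RR} \norm{A(H_0 - \lambda \pm i\varepsilon)^{-1}} < 1$. Taking $\lambda = 0$ shows that $A$ is $H_0$-bounded with relative bound at most $a < 1$, so by (A2) and the Kato--Rellich theorem, $H = H_0 + A$ is self-adjoint on $\domain H = \domain{H_0}$.

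For the bound involving $\eexp^{-itH_0}$, I would plug (A3) directly into \Cref{lem: Local Kato Smoothing} to obtain, for every $\psi \in \HHilb$,
\[
    \int_\RR \eexp^{-2\varepsilon|t|} \abs{A \eexp^{-itH_0}\psi}^2 \diff t \leq 2\varepsilon a^2 \abs \psi^2.
\]
To convert this into a bound on $[-T, T]$, I would set $\delta = 1/\varepsilon$, cover $[-T, T]$ by at most $2T\varepsilon + 1$ consecutive intervals of length $\delta$, and use on a generic sub-interval $[s - \delta/2, s + \delta/2]$ the change of variables $t = s + \tau$ together with the identity $A \eexp^{-itH_0}\varphi = A \eexp^{-i\tau H_0}(\eexp^{-isH_0}\varphi)$ and the isometry $\abs{\eexp^{-isH_0}\varphi} = \abs \varphi$. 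This reduces the local contribution to the weighted estimate on $[-\delta/2, \delta/2]$ applied to the translated vector $\eexp^{-isH_0}\varphi$, at the cost of a bounded multiplicative factor of the form $\eexp^{\varepsilon\delta}$. Summing the contributions of the covering yields the first inequality with a constant proportional to $1 + T$.

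For the bound involving $\eexp^{-itH}$, I would first establish an analogue of (A3) for the operator $H$ and then repeat the previous argument. Using the factorization $H - \lambda \pm i\varepsilon = (I + A(H_0 - \lambda \pm i\varepsilon)^{-1})(H_0 - \lambda \pm i\varepsilon)$ on $\domain{H_0}$ and inverting the first factor via a Neumann series -- which is allowed precisely because $a < 1$ -- I would deduce that
\[
    \sup_{\lambda \in \RR} \norm{A(H - \lambda \pm i\varepsilon)^{-1}} \leq \frac{a}{1-a} < \infty.
\]
I can then apply \Cref{lem: Local Kato Smoothing} to the self-adjoint operator $H$ (with the same closed symmetric $A$) to get a weighted $L^2(\RR)$ bound and rerun the tiling argument unchanged. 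The only genuinely delicate point in the whole proof is this transfer of the smoothing condition from $H_0$ to $H$, which crucially exploits the strict inequality $a < 1$ rather than a mere finiteness; the rest is bookkeeping.
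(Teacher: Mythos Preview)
Your proposal is correct and follows essentially the same approach as the paper: Kato--Rellich for self-adjointness, \Cref{lem: Local Kato Smoothing} for the weighted $L^2(\RR)$ bound, the Neumann-series factorization to transfer the resolvent estimate from $H_0$ to $H$, and a tiling argument exploiting the group property and unitarity to pass to an unweighted bound linear in $T$. The only differences are cosmetic --- the paper treats the $H$ case first and obtains the unweighted local bound by restricting the weighted estimate to $[0,1/\varepsilon]$ before tiling, whereas you center sub-intervals at $s$ and pay the factor $\eexp^{\varepsilon\delta}$ --- but the substance is identical.
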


\begin{proof}
    Let us introduce         
    \begin{equation}\label{eq:ceps}
        c_{\varepsilon} := \sup_{\lambda \in \RR} \norm{A (H_0 - \lambda \pm i\varepsilon)^{-1}} < 1,
    \end{equation}
    for some $\varepsilon >0$ such that (A3) is satisfied. 
    
    First, let us prove that $H_0 + A$ is self-adjoint on $\HHilb$ with domain $\domain{H_0}$. Using \eqref{eq:ceps},  we obtain for all $\psi \in \domain{H_0}$,
    \[
        \abs{A \psi}
        =	\abs{A (H_0 - i\varepsilon)^{-1} (H_0 - i \varepsilon) \psi}
        \leq	c_{\varepsilon}  \abs{(H_0 - i \varepsilon) \psi}
        \leq	c_{\varepsilon} \abs{H_0 \psi} + c_{\varepsilon} \varepsilon \abs \psi.
    \]
    This proves that $A$ is $H_0$-bounded with relative bound lower than $1$. Since we assumed $A$ symmetric, the self-adjointness of $H_0 + A$ is a consequence of the Kato-Rellich theorem.

    \medskip
    
    We now need to prove the estimates. We only prove the second one, since the proof for the first one is similar. For any $\lambda \in \RR$,
    \[
    (H - \lambda - i \varepsilon)^{-1}
    =	\left( (1 + A (H_0 - \lambda - i \varepsilon)^{-1}) (H_0 - \lambda - i \varepsilon) \right)^{-1}
    =	(H_0 - \lambda - i \varepsilon)^{-1} (1 + A(H_0 - \lambda - i \varepsilon)^{-1})^{-1},
    \]
    hence it follows from Neumann's inversion formula
    \[
        \abs{A (H - \lambda - i \varepsilon)^{-1}}
        =	\abs{A (H_0 - \lambda - i \varepsilon)^{-1} (1 + A (H_0 - \lambda - i \varepsilon)^{-1})^{-1}}
    \leq	\frac{c_{\varepsilon}}{1 - c_{\varepsilon}}.
    \]
    
    We can therefore apply Lemma \ref{lem: Local Kato Smoothing}, which shows that there exists a constant $C_\varepsilon \geq 0$ (which may vary along the calculations) such that
    \[
        \forall \varphi \in \mathcal H, \quad
        \int_\RR \diff t \eexp^{-2\varepsilon \abs t} \abs{A \eexp^{-itH} \varphi}^2
        \leq	C_\varepsilon \abs \varphi^2.
    \]
    Reducing the integration domain to $[0, \frac 1 \varepsilon]$, we obtain
    \[
    \forall \varphi \in \mathcal H, \quad
    \int_0^{\frac 1 \varepsilon} \diff t \abs{A \eexp^{-itH} \varphi}^2
    \leq	C_\varepsilon \abs \varphi^2.
    \]
    Now, taking any integer $n \geq 1$ and $\varphi \in \mathcal H$, we have
    \[
    \begin{aligned}
        \int_0^{\frac n \varepsilon} \diff t \abs{A \eexp^{-itH} \varphi}^2
        &=	\sum_{k=0}^{n-1} \int_{\frac k \varepsilon}^{\frac{k+1}{\varepsilon}} \diff t \abs{A \eexp^{-itH} \varphi}^2	\\
        &=	\sum_{k=0}^{n-1} \int_0^{\frac 1 \varepsilon} \diff t \abs{A \eexp^{-itH} (\eexp^{-i \frac k \varepsilon H} \varphi)}^2	\\
        &\leq	C_{\varepsilon} \sum_{k=0}^{n-1} \abs{\eexp^{-i \frac k \varepsilon H} \varphi}^2
        =	C_\varepsilon n \abs \varphi^2.
    \end{aligned}
    \]
    Applying the same process for negative times, and writing any $T \geq 0$ as $T = \frac n \varepsilon + \tau$, $0 \leq \tau < \frac 1 \varepsilon$, the estimate is proved.
\end{proof}

\subsubsection{Proof of Theorem \ref{th: XX0 XX equality}} \label{subsubsec: Proof theorem XX}
\begin{proof}[Proof of Theorem \ref{th: XX0 XX equality}]

{\bfseries Case (i):} Assume that $H = H_0 + A$.
\medskip
    Let $u \in \XX_{H_0}$. By Proposition~\ref{prop: XX properties}, there exists $v \in \Honesp{I, \HHilb}$ such that $u = \eexp^{-itH_0} v$ and $\norm{(i\partial_t - H_0) u}_{\Ltwosp{I, \HHilb}} = \norm{\partial_t v}_{\Ltwosp{I, \HHilb}}$. Therefore,
    \[
    \begin{aligned}
        \norm{Au}_{\Ltwosp{I, \HHilb}}
        &=	\norm{A\eexp^{-itH_0} \left( u(0) + \int_0^t \diff s \partial_t v(s) \right)}_{\Ltwosp{I, \HHilb}}	\\
        &\leq	\norm{A \eexp^{-itH_0} v(0)}_{\Ltwosp{I, \HHilb}} + \int_0^T \diff s \norm{\setindic{s \leq t} A \eexp^{-itH_0} \partial_t v(s)}_{\Ltwosp{I, \HHilb}}.
    \end{aligned}
    \]
    Proposition \ref{prop: Kato Smoothing cheap} shows the existence of a constant $C > 0$ such that
    \begin{equation} \label{equtmp:0}
    \begin{aligned}
        \norm{Au}_{\Ltwosp{I, \HHilb}}
        &\leq	C\sqrt{1+T} \abs{v(0)} + C\sqrt{1+T} \int_0^T \diff s \abs{\partial_t v(s)}	\\
        &\leq	C\sqrt{1+T} \abs{v(0)} + C\sqrt{1+T} \sqrt T \norm{\partial_t v}_{\Ltwosp{I, \HHilb}}	\\
        &\leq	C\sqrt{1+T} \left( \abs{u(0)}^2 + T \norm{(i\partial_t - H_0) u}_{\Ltwosp{I, \HHilb}}^2 \right)^{\frac 1 2}	\\
        &\leq	C\sqrt{1+T} \norm u_{\XX_{H_0}}.
    \end{aligned}
    \end{equation}
    Thus, $(i\partial_t - H)u = (i\partial_t - H_0)u - Au \in \Ltwosp{I, \HHilb}$, that is, $u \in \XX_H$ and
    \begin{equation} \label{equtmp:1}
    \begin{aligned}
        \norm u_{\XX_H}
        &\leq	\abs{u(0)} + \sqrt T \norm{(i\partial_t - H_0 - A) u}_{\Ltwosp{I, \HHilb}}	\\
        &\leq	\abs{u(0)} + \sqrt T \norm{(i\partial_t - H_0) u}_{\Ltwosp{I, \HHilb}} + \sqrt T \norm{Au}_{\Ltwosp{I, \HHilb}}	\\
        &\leq	C (\norm u_{\XX_{H_0}} + \sqrt T \sqrt{1+T} \norm u_{\XX_{H_0}})	\\
        &\leq	C(1+T) \norm u_{\XX_{H_0}}.
    \end{aligned}
    \end{equation}
    This proves the right inequality in \eqref{equ: XX0 XX norm equivalence}.
    
    We now turn to the left inequality in \eqref{equ: XX0 XX norm equivalence}. Let $u \in \XX_H$. Applying Proposition \ref{prop: Kato Smoothing cheap}, we obtain the existence of a $C > 0$ such that
    \[
        \forall \varphi \in \HHilb, \quad
        \norm{A \eexp^{-itH} \varphi}_{\Ltwosp{I, \HHilb}}
        \leq	C \sqrt{1 + T} \abs \varphi.
    \]
    The rest of the proof can be carried out without change, simply by permuting $H_0$ and $H$.
    \ \\ \
    
{\bfseries Case (ii):} We now turn to the case $H(t) = H_0 + A + B(t)$.

\medskip

    Let $u \in \XX_{H_0}$, then
    \[
    \begin{aligned}
        N(u)^2
        &=	\abs{u(0)}^2 + T \norm{(i\partial_t - H_0 - A - B(t))u(t)}_{\Ltwosp{I, \HHilb}}^2	\\
        &\leq	\abs{u(0)}^2 + 2 T \norm{(i\partial_t - H_0 - A)u(t)}_{\Ltwosp{I, \HHilb}}^2 + 2 T M^2 \norm u_{\Ltwosp{I, \HHilb}}^2.
    \end{aligned}
    \]
    The sum of the two first terms can be estimated by the first part of Theorem \ref{th: XX0 XX equality} as follows,
    \[
        \abs{u(0)}^2 + 2 T \norm{(i\partial_t - H_0 - A)u(t)}_{\Ltwosp{I, \HHilb}}^2
        \leq    C(1+T)^2 \norm u_{\XX_{H_0}}^2
    \]
    and for the third term we write
    \[
        \norm u_{\Ltwosp{I, \HHilb}}
        \leq	\sqrt T \norm u_{\Czerosetsp{I, \HHilb}}
        \leq	\sqrt{2T} \norm u_{\XX_{H_0}}.
    \]
    Therefore,
    \[
        N(u)^2
        \leq	C(1+T)^2 \norm u_{\XX_{H_0}}^2,
    \]
    which is the right-hand estimate of \eqref{equ: XX0 XXt norm equivalence}.
    
    Conversely, let $u \in \XX_{H_0}$ and write $u_0 = u(0)$ and $f(t) = (i\partial_t - H_0 - A - B(t))u(t)$. Then it follows from the continuity estimate in Proposition \ref{prop: Weak solutions time dependent} that
    \[
        \norm u_{\Ltwosp{I, \HHilb}}^2
        \leq    T \norm u_{\Czerosetsp{I, \HHilb}}^2
        \leq 2 T (\abs{u(0)}^2 + T \norm f_{\Ltwosp{I, \HHilb}}^2)
        =   2T N(u)^2,
    \]
    therefore,
    \[
    \begin{aligned}
        \norm u_{\XX_H}^2
        &\leq	2(N(u)^2 + M^2 T \norm u_{\Ltwosp{I, \HHilb}}^2)	\\
        &\leq	C (1+T^2) N(u)^2.
    \end{aligned}
    \]
    Using the first part of the proof, we finally obtain
    \[
        \norm u_{\XX_{H_0}} \leq C(1+T) \norm u_{\XX_H} \leq C(1+T)^2 N(u),
    \]
    which concludes the proof.
\end{proof}

\subsubsection{Proof of Corollary \ref{cor: Variational formulation v}} \label{subsubsec: Proof of cor v}
\begin{proof}[Proof of Corollary \ref{cor: Variational formulation v}]
    Let $v = \eexp^{itH_0} u \in \Honesp{I, \HHilb}$, then
    \[
    \begin{aligned}
        (i\partial_t - \eexp^{itH_0} A \eexp^{-itH_0}) v - \eexp^{itH_0} f
        &=	i\partial_t \eexp^{itH_0} u - \eexp^{itH_0} A u - \eexp^{itH_0} f	\\
        &=	\eexp^{itH_0}(i\partial_t - H_0) u - \eexp^{itH_0} A u - \eexp^{itH_0} (i\partial_t - H) u^*	\\
        &=	\eexp^{itH_0} (i\partial_t - H)(u - u^*).
    \end{aligned}
    \]
    It immediately follows that
    \[
    F(v) = \norm{u - u^*}_{\XX_H}^2,
    \]
    and the result is a consequence of \eqref{equ: XX0 XX norm equivalence} and the fact that $\norm{u - u^*}_{\XX_{H_0}} = \norm{v - v^*}_{\Honesp{I, \HHilb}}$.
\end{proof}

\subsubsection{Proof of Theorem \ref{th: XX XX0 equality improved}} \label{subsubsec: Proof of theorem XX improved}
\begin{proof}[Proof of Theorem \ref{th: XX XX0 equality improved}]
    To prove Theorem \ref{th: XX XX0 equality improved}, notice that if $A$ is $H_0$-smooth, then \eqref{equtmp:0} in the proof above can be replaced by
    \[
        \norm{Au}_{\Ltwosp{I, \HHilb}}
            \leq	C \norm u_{\XX_{H_0}}.
    \]
    The result is proved by adapting \eqref{equtmp:1}.
\end{proof}

\subsection{Application to electronic many-body Schrödinger operators} \label{subsec: Application to electronic Schrödinger}
    
In this section, we show that the variational formulation proposed in the previous section can also be applied to electronic many-body Schrödinger operators of the form (\ref{eq:Schroop}).

More precisely, let $H_0 = -\laplacian$, and let $A = V$ denote the multiplication by the electronic potential
\begin{equation} \label{equ: Coulomb potential}
    \forall x_1, ..., x_N \in \RR^3, \quad
    V(x_1, ..., x_N)
    =	\sum_{k=1}^M \sum_{l=1}^N \frac{-Z_k}{\abs{x_{\ell} - X_k}} + \sum_{1 \leq k < \ell \leq N} \frac{1}{\abs{x_k - x_{\ell}}},
\end{equation}
where the $Z_k>0$ denote the charges of the nuclei.

\medskip

The fundamental result needed here concerns the one-body case ($N=1$), and is proven in \cite{Burq-Strichartz-Critical-Potential, Kato-Yajima-Examples-of-smooth-operators}:
\begin{proposition} \label{prop: Coulomb Kato Smoothing for one particle}
    For any $\varphi \in \Ltwosp{\RR^3}$, we have:
    \begin{equation}
        \norm{\frac{1}{\abs x} \eexp^{it\laplacian} \varphi}_{\Ltwosp{\RR_t \times \RR^3_x}}
        \leq	2 \sqrt{\frac 2 \pi} \norm{\varphi}_{\Ltwosp{\RR^3}}.
    \end{equation}
    In other words, the multiplication operator by $\frac{1}{\abs x}$ is $-\laplacian$-smooth.
\end{proposition}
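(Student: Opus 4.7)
My plan is to apply Proposition~\ref{prop: Kato Smoothing}, which reduces the claim to a uniform resolvent bound of the form
\[
\sup_{\mu \in \CC \setminus \RR,\,\|\varphi\|_{L^2}=1} |\Im \mu|\,\bigl\|\,|x|^{-1}(-\Delta - \mu)^{-1}\varphi\,\bigr\|_{L^2(\RR^3)}^2 \;\leq\; \tfrac{4}{\pi},
\]
the right-hand side being pinned by the target smoothing constant $2\sqrt{2/\pi}$ via the identity $\|A\|_{H_0}^2 = c_2$ of Definition~\ref{def: Kato smooth}. Attacking this resolvent estimate directly turns out to be delicate: Hardy's inequality $\int |u|^2/|x|^2 \leq 4 \int |\nabla u|^2$ does not close the bound, since $\|\nabla(-\Delta - \mu)^{-1}\|^2\,|\Im \mu|$ is not uniformly bounded in $\mu$. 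I would therefore go through a direct space–time Fourier computation, outlined below.

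Starting from the Fourier inversion formula for $e^{it\Delta}\varphi$, I would switch to polar coordinates $p = \rho\omega$ and substitute $s = \rho^2 \in (0,\infty)$. This recasts $t \mapsto e^{it\Delta}\varphi(x)$ as the inverse Fourier transform in $t$ of a distribution supported on $\{s \geq 0\}$, so that Plancherel in $t$ collapses the time integration to a one-dimensional integral in $s$ involving the sphere datum
$\Psi_x(s) := \int_{S^2} e^{i\sqrt s\,\omega\cdot x}\hat\varphi(\sqrt s\,\omega)\,d\omega$.
Expanding $|\Psi_x(s)|^2$ as a double integral over $S^2\times S^2$, swapping with the $|x|^{-2}\,dx$ integration by Fubini, and using the classical identity $\int_{\RR^3} e^{i\xi\cdot x}/|x|^2\,dx = 2\pi^2/|\xi|$ to perform the spatial integration, the estimate reduces after the change of variable $\rho = \sqrt s$ to controlling
\[
\int_0^\infty \rho^2\,d\rho \iint_{S^2 \times S^2} \frac{\hat\varphi(\rho\omega)\,\overline{\hat\varphi(\rho\omega')}}{|\omega - \omega'|}\,d\omega\,d\omega'.
\]
The last step is to bound the integral operator $T$ on $L^2(S^2)$ with kernel $1/|\omega - \omega'|$: by rotation invariance it is diagonal in the spherical harmonics basis, and an explicit calculation (using $\int_{-1}^1 P_\ell(u)/\sqrt{1-u}\,du = 2\sqrt 2/(2\ell+1)$ together with the Legendre generating function) yields eigenvalues $4\pi/(2\ell+1)$, so $\|T\|_{\mathcal L(L^2(S^2))} = 4\pi$. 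Applied to $f(\omega) = \hat\varphi(\rho\omega)$ and combined with Plancherel in $\rho$, this closes the argument.

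The main obstacle is twofold. First, the manipulations must be justified by density on the Schwartz class before passing to general $\varphi \in L^2(\RR^3)$, which is routine but requires some care around the distributional Fourier transform of $|x|^{-2}$ and the substitution $s = \rho^2$ (a bijection only on $\{\rho > 0\}$). Second, and more importantly, the recovery of the precise numerical constant stated in the proposition depends on tracking prefactors accurately through the polar change of variables, the Plancherel step, the Fubini exchange, and the final $S^2$-estimate; each individual step is routine, but the bookkeeping is delicate—and in fact the authors simply appeal to the sharp computations in \cite{Burq-Strichartz-Critical-Potential, Kato-Yajima-Examples-of-smooth-operators} rather than reproducing them.
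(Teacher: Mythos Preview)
Your proposal is accurate in spirit and, in fact, goes well beyond what the paper does: the authors do not prove this proposition at all but simply cite \cite{Burq-Strichartz-Critical-Potential, Kato-Yajima-Examples-of-smooth-operators}, exactly as you note in your final sentence. There is therefore no ``paper's own proof'' to compare against; the statement is quoted as an input from the literature.

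That said, the argument you outline is essentially the Kato--Yajima computation from those references: rewrite $e^{it\Delta}\varphi(x)$ via Fourier inversion, pass to polar coordinates and the substitution $s=\rho^2$ so that Plancherel in $t$ applies, then reduce the spatial integral against $|x|^{-2}$ to the quadratic form of the convolution operator on $L^2(S^2)$ with kernel $|\omega-\omega'|^{-1}$, diagonalised by Funk--Hecke with top eigenvalue $4\pi$. This is the correct route and proves $-\Delta$-smoothness of $|x|^{-1}$. Your two caveats are well placed: the justification by density on Schwartz functions is routine, and the recovery of the \emph{exact} constant $2\sqrt{2/\pi}$ is indeed the delicate part---the crude bound $\langle Tf,f\rangle\le\|T\|\,\|f\|^2$ on the sphere yields a slightly larger constant, and matching the sharp value requires either the resolvent computation in Kato--Yajima or the refinement in Simon's best-constants paper. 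Since the paper only needs \emph{some} finite smoothing constant (the precise value enters the many-body estimate \eqref{equ: Coulomb Kato smoothing ineg} but is not otherwise optimised), your sketch would be entirely adequate for the paper's purposes.
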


Using this result, we can obtain a similar one for the many-body case:
\begin{proposition}\label{th: Coulomb Kato smoothing}
    For any $\varphi \in \Ltwosp{\RR^{3N}}$
    \begin{equation} \label{equ: Coulomb Kato smoothing ineg}
        \norm{V\eexp^{it\laplacian}\varphi}_{\Ltwosp{\RR_t \times \RR^{3N}_x}}
        \leq	2 \sqrt{\frac 2 \pi} \left( N \sum_{k=1}^M \abs{Z_k} + \frac{N(N-1)}{2\sqrt 2} \right) \norm{\varphi}_{\Ltwosp{\RR^{3N}}},
    \end{equation}
    where $V$ is defined in (\ref{equ: Coulomb potential}).
    In other words, the multiplication operator by $V$ is $-\laplacian$-smooth in the sense of \Cref {def: Kato smooth}.
\end{proposition}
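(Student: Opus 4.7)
The plan is to reduce the many-body bound to repeated applications of the one-body estimate from \Cref{prop: Coulomb Kato Smoothing for one particle}, by exploiting the tensor-product structure of $\Ltwosp{\RR^{3N}}$. First, I would split $V$ into its $NM$ nucleus-electron terms $-Z_k/\abs{x_\ell - X_k}$ and its $N(N-1)/2$ electron-electron terms $1/\abs{x_k - x_\ell}$, and invoke the triangle inequality in $\Ltwosp{\RR_t \times \RR^{3N}_x}$ to reduce the claim to bounding each term individually.

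For a fixed nucleus-electron term, I would use the orthogonal decomposition $\Ltwosp{\RR^{3N}} = \Ltwosp{\RR^3_{x_\ell}} \otimes \Ltwosp{\RR^{3(N-1)}_{\tilde x}}$, where $\tilde x$ collects the remaining $N-1$ coordinates. The Laplacian splits as $-\laplacian = -\laplacian_{x_\ell} \otimes 1 + 1 \otimes (-\laplacian_{\tilde x})$, and since these commuting self-adjoint summands act on disjoint variables, $\eexp^{it\laplacian} = \eexp^{it\laplacian_{x_\ell}} \otimes \eexp^{it\laplacian_{\tilde x}}$. Expanding $\varphi = \sum_n g_n \otimes f_n$ in an orthonormal basis $\{f_n\}$ of $\Ltwosp{\RR^{3(N-1)}}$, unitarity of $\eexp^{it\laplacian_{\tilde x}}$ at each fixed $t$ yields
\[
    \norm{\frac{1}{\abs{x_\ell - X_k}} \eexp^{it\laplacian}\varphi}_{\Ltwosp{\RR^{3N}}}^2 = \sum_n \norm{\frac{1}{\abs{x_\ell - X_k}} \eexp^{it\laplacian_{x_\ell}} g_n}_{\Ltwosp{\RR^3}}^2.
\]
Integrating in $t$, translating $X_k$ to the origin, and applying the one-body estimate from \Cref{prop: Coulomb Kato Smoothing for one particle} summand by summand, I obtain the constant $2\sqrt{2/\pi}\,\abs{Z_k}$ for this term.

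The electron-electron term $1/\abs{x_k - x_\ell}$ couples two variables, so I would precede the tensor-product argument by the orthogonal change of variables $y = (x_k - x_\ell)/\sqrt 2$, $z = (x_k + x_\ell)/\sqrt 2$ on the pair $(x_k, x_\ell)$. This transformation preserves the Lebesgue measure and the sum of Laplacians $\laplacian_{x_k} + \laplacian_{x_\ell} = \laplacian_y + \laplacian_z$, while the potential becomes $1/(\sqrt 2 \abs y)$, depending only on $y$. Applying the same tensor-product reduction with $y$ playing the role of $x_\ell$ yields the constant $2\sqrt{2/\pi}/\sqrt 2$ per pair. Summing the $NM$ nucleus-electron contributions and the $N(N-1)/2$ electron-electron contributions via the triangle inequality delivers the claimed bound.

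The main subtlety will be verifying the tensor-product reduction, namely that the cross terms in $\norm{(M \otimes 1)(\eexp^{it\laplacian}\varphi)}^2_{\Ltwosp{\RR^{3N}}}$ vanish pointwise in $t$ thanks to the orthonormality of $\{\eexp^{it\laplacian_{\tilde x}}f_n\}_n$, so that summing squared $\Ltwosp{\RR^3}$-norms is legitimate; once this is cleanly stated, Fubini justifies interchanging the $t$-integral with the series, and the rest of the argument is a routine bookkeeping of constants.
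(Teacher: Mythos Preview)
Your proposal is correct and follows essentially the same approach as the paper: decompose $V$ term by term, reduce each term to the one-body estimate via a tensor-product/orthogonality argument, use the translation $y_\ell = x_\ell - X_k$ for nucleus-electron terms and the orthogonal change $(x_k,x_\ell)\mapsto\bigl((x_k-x_\ell)/\sqrt 2,(x_k+x_\ell)/\sqrt 2\bigr)$ for electron-electron terms, and collect the constants by the triangle inequality. The only cosmetic difference is that the paper uses the singular value decomposition of $\varphi$ (orthogonal sequences in both tensor factors) rather than an expansion in an orthonormal basis of the second factor, but the mechanism---orthogonality preserved by the unitary $\eexp^{it\laplacian_{\tilde x}}$ at each fixed $t$---is identical.
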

\begin{proof}
    We first consider the case $V = \frac{1}{\abs{x_1}}$.    
    Let $\varphi \in \Ltwosp{\RR^{3N}}$.
    Using the fact that $\Ltwosp{\RR^{3N}} = \Ltwosp{\RR^{3}} \otimes \Ltwosp{\RR^{3(N-1)}}$, introducing the singular value decomposition of $\varphi$, there exist two orthogonal sequences $(v_k)_{k\geq 1} \subset \Ltwosp{\RR^3}$, $(w_k)_{k\geq 1} \subset \Ltwosp{\RR^{3(N-1)}}$ such that for almost all $x_1\in \mathbb{R}^3$ and $X=(x_2, ..., x_N)\in  \mathbb{R}^{3(N-1)}$
    \[
    \varphi(x_1, X)
    =	\sum_{k\geq 1} v_k(x_1) w_k(X),
    \]
    the sum above being possibly infinite. Therefore,
    \[
    \begin{aligned}
        \norm{\frac{\eexp^{it\laplacian} \varphi}{\abs{x_1}}}_{\Ltwosp{\RR \times \RR^{3N}}}^2
        &=	\norm{\sum_{k\geq 1} \left( \frac{\eexp^{it\laplacian_{x_1}} v_k}{\abs{x_1}} \right) \otimes \left( \eexp^{it\laplacian_X} w_k \right) }_{\Ltwosp{\RR \times \RR^{3N}}}^2	\\
        &=	\sum_{k\geq 1} \norm{\left( \frac{\eexp^{it\laplacian_{x_1}} v_k}{\abs{x_1}} \right) \otimes \left( \eexp^{it\laplacian_X} w_k \right)}_{\Ltwosp{\RR \times \RR^{3N}}}^2,
    \end{aligned}
    \]
    where we used the fact that, for each $t$, $(\eexp^{it\laplacian_X}w_k)_k$ is an orthogonal family of $\Ltwosp{\RR^{3N}}$. Thus,
    \[
    \begin{aligned}
        \norm{\frac{\eexp^{it\laplacian} \varphi}{\abs{x_1}}}_{\Ltwosp{\RR \times \RR^{3N}}}^2
        &=	\sum_{k\geq 1}\int_{\RR}\diff t \norm{\frac{\eexp^{it\laplacian_{x_1}} v_k}{\abs{x_1}}}_{\Ltwosp{\RR^3}}^2 \norm{\eexp^{it\laplacian_X} w_k}_{\Ltwosp{\RR^{3(N-1)}}}^2	\\
        &\leq	\sum_{k\geq 1} \left( 2 \sqrt{\frac 2 \pi} \right)^2 \norm{v_k}_{\Ltwosp{\RR^3}}^2 \norm{w_k}_{\Ltwosp{\RR^{3(N-1)}}}^2	\\
        &=	\left( 2 \sqrt{\frac 2 \pi} \right)^2 \norm{\varphi}_{\Ltwosp{\RR^{3N}}}^2.
    \end{aligned}
    \]

    For the general case, we consider each term of the sum \eqref{equ: Coulomb potential} separately:
    \begin{itemize}
        \item for the terms $\frac{-Z_k}{\abs{x_\ell - X_k}}$, we perform the change of variables $y_\ell = x_\ell - X_k$, and replace $x_1$ by $y_\ell$ in the computation above;
        \item for the terms $\frac{1}{\abs{x_k - x_\ell}}$, we use the change of variables  $y_k = \frac{x_k - x_{\ell}}{\sqrt 2}$ and $y_{\ell} = \frac{x_k + x_{\ell}}{\sqrt 2}$, and replace $x_1$ by $y_k$ in the computation above.
    \end{itemize}
    The inequality \eqref{equ: Coulomb Kato smoothing ineg} then follows by applying the triangle inequality and combining the inequalities obtained for each term.
\end{proof}


For any $u_0\in \Ltwosp{\mathbb{R}^{3N}}$ and any $f\in \Ltwosp{I, \Ltwosp{\mathbb{R}^{3N}}}$, we can therefore reformulate the solution to the evolution problem
\begin{equation} \label{equ: Schrodinger electronic evolution}
\begin{cases}
    i\partial_t u^* = (-\laplacian + V) u^* + f, \\
    u(0) = u_0,
\end{cases}
\end{equation}
as in the previous section (Theorem~\ref{th: XX XX0 equality improved}):
\begin{theorem}\label{th:Schro}
    Let $u_0 \in \Ltwosp{\RR^{3N}}$ and $f \in \Ltwosp{I, \Ltwosp{\RR^{3N}}}$. Let $u^*$ be the solution to \eqref{equ: Schrodinger electronic evolution}, and $v^* := \eexp^{-it\laplacian} u^*$.

    Define for any $v \in \Honesp{I, \Ltwosp{\RR^{3N}}}$ the functional
    \begin{equation}
        F(v)
        =   \norm{v(0) - u_0}_{\Ltwosp{\RR^{3N}}}^2 + T \norm{(i\partial_t - \eexp^{-it\laplacian} V \eexp^{it\laplacian}) v - \eexp^{-it\laplacian} f}_{\Ltwosp{I, \Ltwosp{\RR^{3N}}}}^2.
    \end{equation}
    Then, there exist constants $C, \alpha > 0$ such that for any $v \in \Honesp{I, \Ltwosp{\RR^{3N}}}$,
    \begin{equation}
        \frac{\alpha}{1+T} \norm{v - v^*}_{\Honesp{I, \Ltwosp{\RR^{3N}}}}
        \leq    \sqrt{F(v)}
        \leq    C \sqrt{1+T} \norm{v - v^*}_{\Honesp{I, \Ltwosp{\RR^{3N}}}}.
    \end{equation}
\end{theorem}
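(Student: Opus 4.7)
The plan is to recognize \Cref{th:Schro} as a direct specialization of \Cref{th: XX XX0 equality improved} together with the argument behind \Cref{cor: Variational formulation v}, applied in the concrete setting $H_0 = -\laplacian$, $A = V$ (the many-body Coulomb potential defined in \eqref{equ: Coulomb potential}) and $B(t) \equiv 0$. The substantive analytic input, namely the uniform Kato--Yajima smoothing estimate for $V$, has already been established as \Cref{th: Coulomb Kato smoothing}; what remains is to verify the abstract hypotheses (A) in this concrete setting and to transport the resulting norm equivalence to the functional $F$.

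\textbf{Verification of assumptions (A).} Hypothesis (A1) is classical: $-\laplacian$ is self-adjoint on $\Ltwosp{\RR^{3N}}$ with domain $\Hspacesp{2}{\RR^{3N}}$. For (A2), the function $V$ is real-valued and locally square-integrable, so multiplication by $V$ is symmetric and closed on its maximal domain; moreover, Hardy's inequality in $\RR^3$ applied separately to each Coulomb summand shows that $V$ is $(-\laplacian)$-bounded with arbitrarily small relative bound, whence $\domain{-\laplacian} \subset \domain V$. For (A3), I would invoke \Cref{th: Coulomb Kato smoothing}: since $V$ is $(-\laplacian)$-smooth in the sense of \Cref{def: Kato smooth}, the equivalence of the two statements in \Cref{prop: Kato Smoothing}, as recalled in \Cref{rem: A bounded resolvent assumption}(ii), yields
\[
\sup_{\lambda \in \RR} \norm{V(-\laplacian - \lambda \pm i\varepsilon)^{-1}} \leq \sqrt{\frac{c_2}{\varepsilon}} \xrightarrow[\abs \varepsilon \to \infty]{} 0,
\]
so (A3) holds provided $\varepsilon$ is taken sufficiently large.

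\textbf{Application of the abstract theorems.} With (A) satisfied and $V$ being $(-\laplacian)$-smooth, \Cref{th: XX XX0 equality improved} applies to $H = -\laplacian + V$ and delivers the improved equivalence
\[
\frac{\alpha}{1+T} \norm u_{\XX_{-\laplacian}} \leq \norm u_{\XX_{-\laplacian + V}} \leq C \sqrt{1+T} \norm u_{\XX_{-\laplacian}}.
\]
I would then replay the computation at the heart of \Cref{cor: Variational formulation v} for the change of unknown $v = \eexp^{-it\laplacian} u$, which gives the operator identity
\[
(i\partial_t - \eexp^{-it\laplacian} V \eexp^{it\laplacian}) v - \eexp^{-it\laplacian} f = \eexp^{-it\laplacian} (i\partial_t - H)(u - u^*),
\]
and consequently $F(v) = \norm{u - u^*}_{\XX_H}^2$. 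Combining this identity with the improved equivalence above and the isometry $\norm{u - u^*}_{\XX_{-\laplacian}} = \norm{v - v^*}_{\Honesp{I, \Ltwosp{\RR^{3N}}}}$ given by \Cref{prop: XX properties} immediately yields the two-sided bound claimed in \Cref{th:Schro}.

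\textbf{Main obstacle.} The only genuinely nontrivial content is the $(-\laplacian)$-smoothness of the $N$-body Coulomb potential, which has already been carried out in \Cref{th: Coulomb Kato smoothing} by reducing the many-body estimate to the one-particle Kato--Yajima bound (\Cref{prop: Coulomb Kato Smoothing for one particle}) via a tensor-product singular value decomposition of $\varphi \in \Ltwosp{\RR^{3N}}$ and appropriate linear changes of variables diagonalizing each pairwise interaction. Once this input is available, the proof of \Cref{th:Schro} is essentially bookkeeping along the lines above; no additional analytic effort seems necessary.
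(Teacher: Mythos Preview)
Your proposal is correct and follows precisely the route the paper intends: the paper does not give a separate proof of \Cref{th:Schro} but states it as a direct consequence of \Cref{th: XX XX0 equality improved} (combined with the computation behind \Cref{cor: Variational formulation v}) once the $(-\laplacian)$-smoothness of $V$ has been secured in \Cref{th: Coulomb Kato smoothing}. Your verification of assumptions (A), invocation of the improved norm equivalence, and transport to $F$ via the isometry $\norm{u-u^*}_{\XX_{-\laplacian}} = \norm{v-v^*}_{\Honesp{I,\Ltwosp{\RR^{3N}}}}$ are exactly the steps the paper has in mind.
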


\begin{remark} \label{rem: general density}
    Let us consider the one-body case $N = 1$.
    In practice, the Coulomb potential may be regularized for valence electrons calculations, taking into account the frozen core orbitals. This is the effective core potential approach \cite{hay1985ab} similar to pseudopotentials in solid state physics. Schematically, at a rough level it may be described as follows:
    let $\rho \in \Lonesp{\RR^3}$ be a smooth and rapidly decaying function, and define $V_\delta = \rho_\delta \ast \frac{1}{\abs x}$, where $\rho_\delta(x) = \delta^{-d} \rho(\frac x \delta)$ represents a smeared charge distribution with a regularization parameter $\delta$
    Here, $V_\delta \in \Linfsp{\RR^3}$, therefore Corollary \ref{cor: Variational formulation v} can still be applied.
    However, we then have \mbox{$\norm{V_\delta}_{\Linf} \to \infty$} as $\delta \to 0$, hence the constants $C, \alpha$ in Corollary \ref{cor: Variational formulation v} will degenerate. \\
    An alternative approach can be derived by observing that, for any $\varphi \in \Ltwosp{\RR^3}$, one has (taking advantage of the translation invariance of $\laplacian$)
    \[
    \begin{aligned}
        \frac{1}{\sqrt{2\pi}} \norm{V_\delta \eexp^{it\laplacian} \varphi}_{\Ltwosp{\RR_t \times \RR^3_x}}
        &=  \frac{1}{\sqrt{2\pi}} \norm{\int_{\RR^3} \diff y\rho_\delta(y) \frac{1}{\abs{x - y}} (\eexp^{it\laplacian} \varphi)(x)}_{\Ltwosp{\RR_t \times \RR^3_x}}  \\
        &\leq   \int_{\RR^3} \diff y \abs{\rho_{\delta}(y)} \frac{1}{\sqrt{2\pi}}  \norm{\frac{1}{\abs{x - y}} (\eexp^{it\laplacian} \varphi)(x)}_{\Ltwosp{\RR_t \times \RR^3_x}}  \\
        &\leq   \int_{\RR^3} \diff y \abs{\rho_{\delta}(y)} \norm{\frac{1}{\abs x}}_{\laplacian} \norm \varphi_{\Ltwo}  \\
        &\leq   \norm{\rho_\delta}_{\Lone} \norm{\frac{1}{\abs x}}_{\laplacian} \norm \varphi_{\Ltwo}
        =   \norm{\rho}_{\Lone} \norm{\frac{1}{\abs x}}_{\laplacian} \norm \varphi_{\Ltwo}.
    \end{aligned}
    \]
    We recall here that $\norm \cdot_\laplacian$ is a special case of \Cref{def: Kato smooth} with $H_0 = \laplacian$.
    It follows that $V_\delta$ is $-\laplacian$-smooth, and the value of $\norm{V_\delta}_{\laplacian}$ is bounded uniformly with respect to $\delta$. Hence, the constants $C, \alpha$ in Corollary \ref{cor: Variational formulation v} can in fact be taken independent of $\delta$. \\
    This shows that, even for bounded potentials, the machinery developed in \Cref{subsec: Kato smoothing}, \Cref{subsec: Abstract setting}, and \Cref{th: Coulomb Kato smoothing} may provide some improvement compared to the more elementary approach used for bounded operators. The same argument can be applied to the many-body case.
\end{remark}

\begin{remark}
    To the knowledge of the authors, the arguments developed to prove \Cref{th: XX0 XX equality} do not extend to the case of unbounded time dependent potentials.
    For instance, the important case of an ultrashort laser pulse modeled by a time-dependent potential of the form $B(t) = -x \cdot E(t)$ is not covered by the results of the present paper.
\end{remark}

\subsection{Time regularity of the solution} \label{subsec: Time regularity of the solution}
    
In this section, we study the regularity of the function $v^*$ defined in Corollary \ref{cor: Variational formulation v} with respect to the time variable.
We consider the case $H_0 = -\laplacian$, $A=0$ and $B(t) = V(t)$ a smooth potential, but assumptions could be replaced by more abstract assumptions regarding the commutators $\Liebracket{-\laplacian}{V(t)}$, $\Liebracket{-\laplacian}{\Liebracket{-\laplacian}{V(t)}}$, ...

\begin{theorem} \label{th: Hamiltonian smooth potential}
    Let $k \in \NN$.
    Assume $V = V(t, x) \in \Cinfsetsp{\RR \times \RR^d}$ is real-valued and
    \begin{equation} \label{equ: Potential smoothness assumption}
        \mu_k :=
        \sup_{\substack{0 \leq l \leq k \\ 0 \leq \abs \alpha \leq 2k}}
        \sup_{(t, x) \in \RR \times \RR^d} \abs{\partial_t^l \partial_x^\alpha V(t, x)} < \infty.
    \end{equation}
    Let $u_0 \in \Hspacesp{k}{\RR^d}$, and $u^* \in \XX_{- \laplacian}$ be the solution to
    \begin{equation}
        \begin{cases}
            i\partial_t u^* = (-\laplacian + V) u^*,	\\
            u^*(0) = u_0,
        \end{cases}
    \end{equation}
    in the sense of Definition \ref{def: definition weak solution} (with $J = \RR)$, and define $v^* := \eexp^{-it\laplacian} u^*$.
    Then $v^* \in \Cregsetsp{k+1}{\RR, \Ltwosp{\RR^d}}$, and there exists a constant $C_k$, that only depends on $\mu_k$ such that 
    \begin{equation}
        \forall t \in \RR, \;
        \norm{(i\partial_t)^{k+1} v^*}_{\Czerosetsp{\RR, \Ltwosp{\RR^d}}} \leq C_k (1+ \abs t^{\frac k 2})\norm{u_0}_{\Hspacesp{k}{\RR^d}}.
    \end{equation}
\end{theorem}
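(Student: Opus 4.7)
My proof would proceed in three main steps. First, by induction on $l \geq 1$, I would establish the recursive algebraic identity
\[
(i\partial_t)^l v^*(t) = \eexp^{-it\laplacian} Q_l(t) u^*(t),
\]
in which $Q_l(t)$ is a linear differential operator in $x$ of order $l-1$, with smooth coefficients given by polynomials in the spatial derivatives of $V(t,\cdot)$ of order up to $2(l-1)$ and in its time derivatives of order up to $l-1$, all bounded by constants depending only on $\mu_k$. The base case $Q_1 = V$ follows from the cancellation
\[
i\partial_t v^* = \laplacian \eexp^{-it\laplacian} u^* + \eexp^{-it\laplacian}(-\laplacian + V) u^* = \eexp^{-it\laplacian} V u^*.
\]
The inductive step uses the equation $i\partial_t u^* = (-\laplacian + V) u^*$ and the commutation of $\eexp^{-it\laplacian}$ with $\laplacian$ to produce the recursion
\[
Q_{l+1} = [\laplacian, Q_l] + i\partial_t Q_l + Q_l V,
\]
in which $[\laplacian, Q_l]$ raises the differential order by exactly one while the other two terms preserve it, so the order count $l-1$ is propagated.

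Second, by unitarity of $\eexp^{-it\laplacian}$ on $\Ltwosp{\RR^d}$, this identity yields
\[
\|(i\partial_t)^{k+1} v^*(t)\|_{\Ltwosp{\RR^d}} = \|Q_{k+1}(t) u^*(t)\|_{\Ltwosp{\RR^d}} \leq C_k \|u^*(t)\|_{\Hspacesp{k}{\RR^d}},
\]
with $C_k$ depending only on $\mu_k$, since $Q_{k+1}$ is a spatial differential operator of order $k$ with coefficients bounded uniformly in $t$ by a constant controlled by $\mu_k$. It is therefore sufficient to prove the polynomial growth estimate
\[
\|u^*(t)\|_{\Hspacesp{k}{\RR^d}} \leq C(1 + |t|^{k/2})\|u_0\|_{\Hspacesp{k}{\RR^d}}.
\]
For even $k = 2m$, I would use the functional $E_m(t) = \|H(t)^m u^*(t)\|^2$ together with the identity
\[
\partial_t(H^m u^*) = -iH^{m+1} u^* + \sum_{j=0}^{m-1} H^{m-1-j}(\partial_t V) H^j u^*.
\]
The self-adjointness of $H(t)$ eliminates the Hamiltonian contribution from $\partial_t E_m$, leaving only the source term and the bound $|\partial_t \sqrt{E_m(t)}| \leq C\|u^*(t)\|_{\Hspacesp{2m-2}{\RR^d}}$, and an induction on $m$ then yields $\sqrt{E_m(t)} \leq C(1 + |t|^m)\|u_0\|_{\Hspacesp{2m}{\RR^d}}$. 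For odd $k = 2m+1$, I would apply the same energy technique to $\nabla u^*$, which satisfies the inhomogeneous Schrödinger equation $i\partial_t \nabla u^* = H \nabla u^* + (\nabla V) u^*$, using the even-order estimates as input.

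The continuity $v^* \in \Cregsetsp{k+1}{\RR, \Ltwosp{\RR^d}}$ then follows from the $\Hspacesp{k}{\RR^d}$-continuity of $t \mapsto u^*(t)$ (a standard consequence of well-posedness of the Schrödinger equation with smooth bounded time-dependent potentials) together with the smoothness in $t$ of the coefficients of $Q_l$ ensured by \eqref{equ: Potential smoothness assumption}. I expect the main obstacle to lie in the third step, specifically in obtaining the sharp exponent $k/2$ rather than $(k+1)/2$: naive energy estimates based on $\|(1-\laplacian)^{k/2} u^*\|^2$ or a direct Duhamel iteration tend to lose a factor of $|t|^{1/2}$ for odd $k$, and what restores the optimal exponent is the exact cancellation of the top-order Hamiltonian contribution in $\partial_t\|H^m u^*\|^2$, a feature cleanly available only for even orders; the odd case therefore requires a more delicate argument combining the even-order energy estimates with the analogous estimate for $\nabla u^*$.
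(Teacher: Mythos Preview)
Your Steps 1 and 2 coincide with the paper's argument: the paper derives the same recursion $P_k = [\laplacian, P_{k-1}] + i\partial_t P_{k-1} + P_{k-1} V$ (your $Q_{l}$ is the paper's $P_{l-1}$), and then reduces to an $\Hspacesp{k}{\RR^d}$ growth bound on $u^*$ via unitarity of $\eexp^{-it\laplacian}$.

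The difference lies entirely in Step 3, and it is exactly the obstacle you anticipated. The paper does \emph{not} split into even and odd $k$: instead of the functional $\|H(t)^m u^*\|^2$, it differentiates the quadratic form $\langle (-\laplacian + V(t))^k u^*(t), u^*(t)\rangle$ directly for every $k$. The Hamiltonian contribution $2\Re\langle H^k u, -iHu\rangle$ still vanishes by self-adjointness, and the $\partial_t V$ terms produce a bilinear form of total differential order $2(k-1)$, hence (after balancing derivatives by integration by parts) are bounded by $C\|u^*(t)\|_{\Hspacesp{k-1}{\RR^d}}^2$. Induction then gives $|\langle H^k u^*(t), u^*(t)\rangle| \leq C(1+|t|^k)\|u_0\|_{\Hspacesp{k}{\RR^d}}^2$. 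The passage back to $\|u^*(t)\|_{\Hspacesp{k}{\RR^d}}$ uses the \emph{form domain} equivalence $\formdomain{H^k} = \Hspacesp{k}{\RR^d}$ with uniform-in-$t$ constants, namely $\langle H^k \varphi, \varphi\rangle + a_k\|\varphi\|_{\Ltwo}^2 \sim \|\varphi\|_{\Hspacesp{k}{\RR^d}}^2$, which the paper records just before the proof. This single device yields the sharp exponent $k/2$ for all $k$, so your proposed separate treatment of odd $k$ via $\nabla u^*$ is unnecessary (though it would probably work with more effort).

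One further point: the paper first proves everything for $u_0 \in \bigcap_{s\geq 0} \Hspacesp{s}{\RR^d}$ to justify the formal manipulations with strong derivatives, and then passes to general $u_0 \in \Hspacesp{k}{\RR^d}$ by a Fourier cutoff approximation $u_0^n = \chi(D/n)u_0$ and a Cauchy-sequence argument in $\Czerosetsp{[-A,A], \Ltwosp{\RR^d}}$ for $(i\partial_t)^{k+1}v^n$. Your sketch omits this density step; the $\Hspacesp{k}{\RR^d}$-continuity of $t\mapsto u^*(t)$ that you invoke for $v^* \in \Cregsetsp{k+1}{\RR, \Ltwosp{\RR^d}}$ is itself part of what needs to be established here.
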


The proof of Theorem \ref{th: Hamiltonian smooth potential} is based upon the following observation: if $V$ satisfies \eqref{equ: Potential smoothness assumption}, then for any given $t \in \RR$ and $k \geq 1$, the form domains $\formdomain{(-\laplacian + V(t))^k}$ and $\formdomain{(-\laplacian)^k} = \Hspacesp{k}{\RR^d}$ coincide, and the associated norms are equivalent, that is, there exist constants $a_k, M_k > 0$ such that
\begin{equation} \label{Smooth potential form domain equivalence}
    \forall \varphi \in \Hspacesp{k}{\RR^d}, \quad
    \frac{1}{M_k} \norm \varphi_{\Hspacesp{k}{\RR^d}}^2
    \leq	\dotprodbracket{(-\laplacian + V(t))^k \varphi}{\varphi} + a_k \norm \varphi_{\Ltwosp{\RR^d}}^2
    \leq	M_k \norm \varphi_{\Hspacesp{k}{\RR^d}}^2.
\end{equation}
Moreover, the constants $a_k, M_k$ only depend on $\sup_{x \in \RR^d} \abs{\partial_x^\alpha V(t, x)}$ for $\abs \alpha \leq k$, hence can be chosen independent of $t$ according to \eqref{equ: Potential smoothness assumption}.

\begin{proof}[Proof of Theorem \ref{th: Hamiltonian smooth potential}]
    First assume that $u_0 \in \bigcap_{s \geq 0} \Hspacesp{s}{\RR^d}$, which ensures that we only manipulate strong derivatives.   \\
    We prove the following result by induction : for any $k \geq 0$ there exists a family of differential operators differential operator $P_k(t) = \sum_{\abs \alpha \leq k} a_\alpha^k(t, x) \partial_x^\alpha$ whose coefficients are smooth functions with bounded derivatives that only depend on the $\partial_t^j\partial_x^\alpha V$ for $j \leq k$ and $\abs \alpha \leq 2k$ such that
    \[
        \forall t \in \RR, \;
        (i\partial_t)^{k+1} v(t) = \eexp^{-it\laplacian} P_k(t) u(t),
    \]
    and there exists a constant $C_k$ that only depends on $k$ and $V$ such that
    \[
        \forall t \in \RR, \;
        \norm{u(t)}_{\Hspacesp{k}{\RR^d}}
        \leq    C_k (1 + \abs t^{\frac k 2}) \norm{u_0}_{\Hspacesp{k}{\RR^d}}.
    \]
    \begin{itemize}
        \item 
        For $k=0$ just take $P_0(t) = V(t)$, and use the fact that $\norm{u(t)}_{\Ltwosp{\RR^d}}$ is constant.
        
        \item
        Assume the result holds for $k-1$.
        Then
        \[
        \begin{aligned}
            (i\partial_t)^{k+1} v
            &=	(i\partial_t) (\eexp^{-it\laplacian} P_{k-1} \eexp^{it\laplacian} v)	\\
            &=	\eexp^{-it\laplacian} \Liebracket{\laplacian}{P_{k-1}} \eexp^{it\laplacian} v + \eexp^{-it\laplacian} (i\partial_t P_{k-1}) \eexp^{it\laplacian} v(t) + \eexp^{-it\laplacian} P_{k-1} \eexp^{it\laplacian} i\partial_t v	\\
            &=	\eexp^{-it\laplacian} \Liebracket{\laplacian}{P_{k-1}} u + \eexp^{-it\laplacian} (i\partial_t P_{k-1}) u + \eexp^{-it\laplacian} P_{k-1} V u,
        \end{aligned}
        \]
        and $P_k(t) = \Liebracket{\laplacian}{P_{k-1}(t)} + i\partial_t P_{k-1}(t) + P_{k-1}(t) V(t)$ is as stated.

        Now we compute
        \[
        \begin{aligned}
            &\frac{\diff{}}{\diff t} \dotprodbracket{(-\laplacian + V(t))^k u(t)}{u(t)}  \\
            &=  2 \underbrace{\Re \dotprodbracket{(-\laplacian + V(t))^k u(t)}{\partial_t u(t)}}_{= 0}
            + \Re \dotprodbracket{\sum_{j=1}^k (-\laplacian + V(t))^{j-1} \partial_t V(t) (-\laplacian + V(t))^{j-k} u(t)}{u(t)},
        \end{aligned}
        \]
        which implies
        \[
            \forall t \in \RR, \quad
            \frac{\diff{}}{\diff t}  \dotprodbracket{(-\laplacian + V(t))^k u(t)}{u(t)}
            \leq    C_k \norm{u(t)}_{\Hspacesp{k-1}{\RR^d}}^2
            \leq    C_k (1 + \abs t^{k-1}) \norm{u_0}_{\Hspacesp{k-1}{\RR^d}}^2.
        \]
        Integrating with respect to the time variable between $0$ and $t$, we obtain
        \[
            \forall t \in \RR, \quad
            \abs{\dotprodbracket{(-\laplacian + V(t))^k u(t)}{u(t)}}
            \leq    C_k (1 + \abs t^k) \norm{u_0}_{\Hspacesp{k}{\RR^d}}^2.
        \]
        The conclusion follows from \eqref{Smooth potential form domain equivalence}.
    \end{itemize}
    
    We extend the result to all $u_0 \in \Hspacesp{k}{\RR^d}$ by regularization: let $\chi \in \Ccinfsetsp{\RR^d}$ be such that $0 \leq \chi \leq 1$, and $\chi = 1$ near $0$ and for all $n\in \mathbb{N}$, set $u_0^n = \chi(\frac D  n) u_0 = \Fourier^{-1} \left( \chi(\frac \xi n) \Fourierhat{u_0} \right)$ (that is $\chi(\frac \cdot n)$ is used as a cut-off in the Fourier domain) and $u^n$ the solution to
    \[
    \begin{cases}
        i\partial_t u^n = (-\laplacian + V(t)) u^n,	\\
        u^n(0) = u_0^n,
    \end{cases}
    \]
    as well as $v^n = \eexp^{-it\laplacian} u^n$.
    Then
    \[
        u_0^n \xrightarrow[n \to \infty]{\Hspacesp{k}{\RR^d}} u_0,
    \]
    and
    \[
        \sup_{t \in \RR} \norm{v^*(t) - v^n(t)}_{\Ltwosp{\RR^d}}
        =	\norm{u_0 - u_0^n}_{\Ltwosp{\RR^d}}
        \xrightarrow[n \to \infty]{} 0,
    \]
    so in particular we have convergence in the weak sense. \\
    Additionally, it follows from what we proved earlier that $(i\partial_t)^{k+1} v^n$ is continuous with respect to $t$ and that for any $A > 0$,
    \[
    \forall m, n \in \mathbb{N}, \quad
    \norm{(i\partial_t)^{k+1} v^m - (i\partial_t)^{k+1} v^n}_{\Czerosetsp{[-A, A], \Ltwosp{\RR^d}}}
    \leq	C_k (1 + A^{\frac k 2}) \norm{u_0^m - u_0^n}_{\Hspacesp{k}{\RR^d}}.
    \]
    Therefore, $((i\partial_t)^{k+1} v^n)_n$ is a Cauchy sequence in the complete space $\Czerosetsp{[-A, A], \Ltwosp{\RR^d}}$, and there exists a $w \in \Czerosetsp{[-A, A], \Ltwosp{\RR^d}}$ such that
    \[
        (i\partial_t)^{k+1} v^n \xrightarrow[n \to \infty]{} w \quad \text{strongly in $\Czerosetsp{[-A, A], \Ltwosp{\RR^d}}$}.
    \]
    We infer, by identifying weak limits, that $(i\partial_t)^{k+1} v^* = w$, and
    \[
        \abs{(i\partial_t)^{k+1} v^*(t)}
        =	\lim_{n \to \infty} \abs{(i\partial_t)^{k+1} v^n(t)}
        \leq	C_k (1 + \abs t^{\frac k 2}) \lim_{n \to \infty} \norm{v_0^n}_{\Hspacesp{k}{\RR^d}}
        =	C_k (1 + \abs t^{\frac k 2}) \norm{v_0}_{\Hspacesp{k}{\RR^d}}
    \]
    for any $t \in [-A, A]$. Since $A$ is arbitrary, the result is proved.
\end{proof}
        
\subsection{Dual formulation} \label{subsec: Dual formulation}

The aim of this section is to comment on an alternative formulation and highlight the link between our work and the approach presented in~\cite{hain_ultra-weak_2024}.
    
In all this section we will assume, as in Theorem \ref{th: XX0 XX equality}, that the operators $H_0$ and $A$ satisfy the set of assumptions (A), $H = H_0 + A$, and $\fctshortdef{B}{t \in \setclosure I}{B(t)}$ is a strongly continuous family of bounded self-adjoint operators on $\HHilb$.

Using some ideas from~\cite{hain_ultra-weak_2024}, a different formulation on the less regular space $\Ltwosp{I, \HHilb}$ can be obtained. Let us first recall here the principle of the approach of~\cite{hain_ultra-weak_2024}. The idea of this method is to consider the Hilbert space
\begin{equation} \label{equ: YY definition}
    \YY_H = \set{u \in \XX_H}{u(T) = 0},
\end{equation}
equipped with the inner product
\begin{equation}
    \forall u_1, u_2 \in \YY_H, \quad
    \dotprodbracket{u_1}{u_2}_{\YY_H}
    =   \dotprodparenthesis{(i\partial_t - H) u_1}{(i\partial_t - H) u_2}_{\Ltwosp{I, \HHilb}}
\end{equation}
and the associated norm
\begin{equation} \label{equ: YY norm definition}
    \forall u \in \YY_H, \quad
    \norm u_{\YY_H} = \norm{(i\partial_t - H) u}_{\Ltwosp{I, \HHilb}}.
\end{equation}
Similarly, let us consider the space $\YY_{H_0}$ defined by \eqref{equ: YY definition} with $H = H_0$.

We then consider the antidual space
\begin{equation}
    \antidualspace{\YY_H}
    =   \set{\ell:\YY_H \to \CC}{\begin{array}{l} \text{$\ell$ is continuous and} \\ \forall u_1, u_2 \in \YY_H, \, \forall \lambda \in \CC, \, \ell(u_1 + \lambda u_2) = \ell(u_1) + \conjugate \lambda \ell(u_2) \end{array}},
\end{equation}
that is, the space of continuous antilinear forms on $\YY_H$.
We denote by $\dualitybracket \cdot \cdot_{\YY_H \times \antidualspace{\YY_H}}$ the antidual pairing $\YY_H \times \antidualspace \YY_H$, antilinear (resp. linear) with respect to the first (resp. second) variable. In particular, the Riesz representation theorem states that, for any $l \in \antidualspace \YY_H$, there exists a $u_l \in \YY_H$ such that
\[
    \forall u \in \YY_H, \quad
    \dualitybracket{u}{l}_{\YY_H \times \antidualspace{\YY_{H}}}
    =   \dotprodbracket{u}{u_l}_{\YY_H}.
\]

In~\cite{hain_ultra-weak_2024}, a variational formulation for (\ref{equ: Schrodinger evolution}) is proposed by means of a continuous bilinear form defined on $L^2(I; \HHilb \times \YY_H)$ which can be proved to satisfy classical inf-sup conditions. The conditioning of this formulation is proved to be optimal. However, one drawback is that there is no explicit caracterization of the set $\YY_H$. Our aim here is to provide an alternative formulation exploiting the fact that we can, in the present perturbative setting, give an explicit characterization of $\YY_H$. We then have the following result, which is proved analogously to Theorem \ref{th: XX0 XX equality}, simply by taking the integrals from $T$ to $t$ instead of $0$ to $t$. Let us point out that this result is an extension of Theorem~2.4 of~\cite{hain_ultra-weak_2024}, which enables us to treat unbounded potentials and Schrödinger equations defined on unbounded domains. 
\begin{proposition}
    Let $H_0$ and $A$ be operators satisfying the set of assumptions (A), and $\fctshortdef{B}{t \in \setclosure I}{B(t)}$ be a strongly continuous family of bounded self-adjoint operators on $\HHilb$. Set $H = H_0 + A$.
    It holds that $\YY_H = \YY_{H_0}$.
    Moreover, the map $L: \YY_{H_0} \to \RR_+$ defined by
    \begin{equation} \label{equ: L definition}
        \forall u \in \YY_{H_0}, \quad
        L(u) = \norm{(i\partial_t - H - B(t)) u}_{\Ltwosp{I, \HHilb}},
    \end{equation}
    defines a norm on $\YY_{H_0}$, and there exist constants $\alpha, C> 0$ independent of $T$ such that
    \begin{equation} \label{equ: XX0 XXt norm equivalence 2}
        \forall u \in \YY_{H_0}, \quad
        \frac{\alpha}{(1 + T)^2} \norm u_{\YY_{H_0}} \leq L(u) \leq C (1 + T) \norm u_{\YY_{H_0}}.
    \end{equation}
\end{proposition}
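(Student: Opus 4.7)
The plan is to adapt the proof of \Cref{th: XX0 XX equality} by replacing the forward representation $v(t) = u(0) + \int_0^t \partial_s v(s)\diff s$ with the backward representation $v(t) = -\int_t^T \partial_s v(s)\diff s$, which is available because the terminal condition $u(T)=0$ translates, through the isomorphism of \Cref{prop: XX properties}, to $v(T) = \eexp^{iTH_0}u(T) = 0$. First, the set equality $\YY_H = \YY_{H_0}$ is immediate from \Cref{th: XX0 XX equality} together with the continuous embedding $\XX_H = \XX_{H_0} \hookrightarrow \Czerosetsp{\setclosure I, \HHilb}$, which makes the condition $u(T) = 0$ meaningful and consistent in both spaces. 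Moreover, the absence of a boundary term in the definition of $\norm{\cdot}_{\YY_{H_0}}$ now gives the clean identity $\norm u_{\YY_{H_0}} = \norm{\partial_t v}_{\Ltwosp{I, \HHilb}}$.

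For the norm equivalence $\norm{\cdot}_{\YY_H} \sim \norm{\cdot}_{\YY_{H_0}}$, I would repeat the estimation of $\norm{Au}_{\Ltwosp{I, \HHilb}}$ from the proof of \Cref{th: XX0 XX equality}: writing $Au(t) = -\int_t^T A\eexp^{-itH_0} \partial_s v(s)\diff s$, Minkowski's integral inequality followed by \Cref{prop: Kato Smoothing cheap} applied pointwise to $\partial_s v(s) \in \HHilb$ yields $\norm{Au}_{\Ltwosp{I, \HHilb}} \leq C\sqrt{(1+T)T}\,\norm u_{\YY_{H_0}}$. The triangle inequality then gives $\norm u_{\YY_H} \leq C(1+T)\norm u_{\YY_{H_0}}$, and swapping the roles of $H_0$ and $H$ (both of which satisfy the hypotheses of \Cref{prop: Kato Smoothing cheap}) yields the converse inequality.

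For the upper bound $L(u) \leq C(1+T)\norm u_{\YY_{H_0}}$, the strong continuity of $B$ together with the Banach--Steinhaus theorem gives $M := \sup_{t \in \setclosure I}\norm{B(t)} < \infty$. Combining the triangle inequality with the pointwise bound $\abs{u(t)} = \abs{v(t)} \leq \sqrt{T-t}\,\norm{\partial_s v}_{\Ltwosp{I, \HHilb}}$ (which integrates to $\norm u_{\Ltwosp{I, \HHilb}} \leq \tfrac{T}{\sqrt 2}\norm u_{\YY_{H_0}}$) then produces the claimed estimate.

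The main obstacle is the lower bound $\tfrac{\alpha}{(1+T)^2}\norm u_{\YY_{H_0}} \leq L(u)$. The strategy is to view $u$ as the backward weak solution of $(i\partial_t - H(t))u = g$ with $u(T) = 0$, where $g := (i\partial_t - H - B(t))u$, so that $\norm g_{\Ltwosp{I,\HHilb}} = L(u)$. The backward energy identity
\[
\tfrac{\diff{}}{\diff t}\abs{u(t)}^2 = 2 \Im \dotprodbracket{g(t)}{u(t)},
\]
established first on the dense subspace $\mathcal W^\infty_H$ and then extended by density exactly as in the derivation of \eqref{equ: XX IBP}, integrates from $t$ to $T$ to give $\norm u_{\Czerosetsp{\setclosure I, \HHilb}} \leq 2\sqrt{T}\,L(u)$. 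This controls $\norm{B(t)u}_{\Ltwosp{I, \HHilb}} \leq \sqrt{2}\,MT\,L(u)$, whence $\norm u_{\YY_H} \leq C(1+T)L(u)$, and combining with the first part produces $\norm u_{\YY_{H_0}} \leq C(1+T)^2 L(u)$. This simultaneously proves the stated inequality and the fact that $L$ separates points, hence defines a norm on $\YY_{H_0}$. The subtle point is the justification of the backward energy identity for a merely weak solution — since $H(t)u(t)$ need not lie in $\Ltwosp{I, \HHilb}$ pointwise, one cannot test with $u$ directly — but this is handled by the same density argument employed for \eqref{equ: XX IBP}.
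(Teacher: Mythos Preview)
Your proposal is correct and follows essentially the same approach as the paper, which simply states that the proposition ``is proved analogously to Theorem~\ref{th: XX0 XX equality}, simply by taking the integrals from $T$ to $t$ instead of $0$ to $t$.'' You have supplied exactly this adaptation in detail: the backward representation $v(t) = -\int_t^T \partial_s v(s)\,ds$ replaces the forward one, and the energy identity you use for the lower bound is precisely the backward analogue of the continuity estimate from Proposition~\ref{prop: Weak solutions time dependent} that the paper invokes in Case~(ii) of Theorem~\ref{th: XX0 XX equality}.
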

 Now consider the operator $\fctshortdef{\adjoint S}{\Ltwosp{I, \HHilb}}{\antidualspace \YY_{H_0}}$ defined by
\begin{equation}
\label{equ: S-star definition}
    \forall w \in \Ltwosp{I, \HHilb}, \quad \forall u \in \YY_{H_0} \quad
    \dualitybracket{u}{\adjoint S w}_{\YY_{H_0} \times \antidualspace{\YY_{H_0}}}
    =	\dotprodparenthesis{(i\partial_t - H - B(t)) u}{w}_{\Ltwosp{I, \HHilb}},
\end{equation}
and similarly the operator $\fctshortdef{\adjoint S_0}{\Ltwosp{I, \HHilb}}{\antidualspace \YY_{H_0}}$ defined by 
    \begin{equation}
\label{equ: S_0-star definition}
    \forall w \in \Ltwosp{I, \HHilb}, \quad \forall u \in \YY_{H_0} \quad
    \dualitybracket{\adjoint S_0 w}{u}_{\YY_{H_0} \times \antidualspace{\YY_{H_0}}}
    =	\dotprodparenthesis{w}{(i\partial_t - H_0) u}_{\Ltwosp{I, \HHilb}}.
\end{equation}

\begin{lemma}
    We have the following estimates :
    \begin{equation} \label{equ: S-star coercivity estimates}
        \forall w \in \Ltwosp{I, \HHilb}, \quad
        \frac{\alpha}{(1+T)^2} \norm w_{\Ltwosp{I, \HHilb}}
        \leq	\norm{S^* w}_{\YY_0^\dagger}
        \leq	C(1+T) \norm w_{\Ltwosp{I, \HHilb}}.
    \end{equation}
\end{lemma}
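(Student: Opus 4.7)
The upper bound will follow directly from Cauchy--Schwarz combined with the upper bound in \eqref{equ: XX0 XXt norm equivalence 2}. Indeed, for any $u \in \YY_{H_0}$ and $w \in \Ltwosp{I, \HHilb}$, the definition \eqref{equ: S-star definition} gives
\[
    \abs{\dualitybracket{u}{\adjoint S w}_{\YY_{H_0} \times \antidualspace{\YY_{H_0}}}}
    \leq    L(u) \norm w_{\Ltwosp{I, \HHilb}}
    \leq    C(1+T) \norm u_{\YY_{H_0}} \norm w_{\Ltwosp{I, \HHilb}},
\]
so the upper bound in \eqref{equ: S-star coercivity estimates} follows by taking the supremum over $u \in \YY_{H_0} \setminus \{0\}$.

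For the lower bound, my strategy is to test $\adjoint S w$ against a cleverly chosen $u = u_w \in \YY_{H_0}$. Given $w \in \Ltwosp{I, \HHilb}$, I would solve the backward Cauchy problem
\[
    (i\partial_t - H - B(t)) u = w \quad \text{in } \Ltwosp{I, \HHilb}, \qquad u(T) = 0.
\]
Assuming such a $u$ exists and belongs to $\YY_H = \YY_{H_0}$ (see the next paragraph), one has by construction $L(u) = \norm w_{\Ltwosp{I, \HHilb}}$, and the left-hand inequality in \eqref{equ: XX0 XXt norm equivalence 2} gives $\norm u_{\YY_{H_0}} \leq \frac{(1+T)^2}{\alpha} \norm w_{\Ltwosp{I, \HHilb}}$. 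Plugging $u$ into \eqref{equ: S-star definition} yields
\[
    \dualitybracket{u}{\adjoint S w}_{\YY_{H_0} \times \antidualspace{\YY_{H_0}}}
    =   \dotprodparenthesis{w}{w}_{\Ltwosp{I, \HHilb}}
    =   \norm w_{\Ltwosp{I, \HHilb}}^2,
\]
so
\[
    \norm{\adjoint S w}_{\antidualspace{\YY_{H_0}}}
    \geq    \frac{\abs{\dualitybracket{u}{\adjoint S w}}}{\norm u_{\YY_{H_0}}}
    \geq    \frac{\alpha}{(1+T)^2} \norm w_{\Ltwosp{I, \HHilb}},
\]
which is the desired lower bound.

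The main technical point is the existence of the backward solution $u \in \YY_H$. This will be handled by a time-reversal argument: setting $\tilde u(s) := u(T - s)$ and $\tilde w(s) := -w(T-s)$, the backward problem becomes the forward Cauchy problem
\[
    (i\partial_s - (-H) - (-B(T-s))) \tilde u = \tilde w, \qquad \tilde u(0) = 0,
\]
for the self-adjoint operator $-H = -H_0 + (-A)$ (which still satisfies assumptions (A) up to a sign) and the strongly continuous family $s \mapsto -B(T-s)$ of bounded self-adjoint operators. Proposition \ref{prop: XX properties} then provides a unique $\tilde u \in \XX_{-H}$, and since $\eexp^{-is(-H)} = \eexp^{isH}$ the identification $\XX_{-H} = \XX_H$ is immediate (hence $\YY_{-H} = \YY_H$). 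Undoing the time reversal yields $u \in \YY_H = \YY_{H_0}$, with $L(u) = \norm w_{\Ltwosp{I, \HHilb}}$ by construction. I do not expect this time-reversal step to be a serious obstacle, but it is the only non-routine ingredient in the argument.
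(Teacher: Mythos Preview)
Your proof is correct and follows essentially the same inf--sup argument as the paper: the upper bound via Cauchy--Schwarz and the right inequality in \eqref{equ: XX0 XXt norm equivalence 2}, and the lower bound by testing against the backward solution $u_w \in \YY_{H_0}$ satisfying $(i\partial_t - H - B(t)) u_w = w$ and then invoking the left inequality in \eqref{equ: XX0 XXt norm equivalence 2}. The paper simply asserts the existence of $u_w$ as a consequence of the preceding proposition (which already encodes the well-posedness of the backward problem via the norm equivalence on $\YY_{H_0} = \YY_H$), whereas you spell it out via time reversal; this is additional detail, not a different method.
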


\begin{proof}
    Both estimates follow from \eqref{equ: XX0 XXt norm equivalence 2} by a classical inf-sup argument which we recall here.
    
    For all $w \in \Ltwosp{I, \HHilb}$, one has with \eqref{equ: L definition}
    \[
    \begin{aligned}
        \norm{S^* w}_{\YY_0^\dagger}
        &=	\sup_{\norm u_{\YY_{H_0}} = 1} \dualitybracket{(i\partial_t - H - B(t)) u}{w}_{\YY_{H_0} \times \antidualspace{\YY_{H_0}}}
        \leq	\sup_{\norm u_{\YY_{H_0}} = 1} \norm w_{\Ltwosp{I, \HHilb}} L(u)   \\
        &\leq	C(1+T) \norm w_{\Ltwosp{I, \HHilb}},
    \end{aligned}
    \]
    which proves the second inequality in \eqref{equ: S-star coercivity estimates}.

    For the first inequality, define $u_w$ as the unique element of $\YY_{H_0}$ such that $(i\partial_t - H - B(t)) u_w = w$ (in particular $L(u_w) = \norm w_{\Ltwosp{I, \HHilb}}$), then by \eqref{equ: XX0 XXt norm equivalence 2}
    \[
    \begin{aligned}            
        \norm{S^* w}_{\antidualspace{\YY_{H_0}}}
        &\geq	\frac{\dualitybracket{w}{(i\partial_t - H - B(t)) u_w}_{\YY_{H_0} \times \antidualspace{\YY_{H_0}}}}{\norm{u_w}_{\YY_{H_0}}}
        =	\frac{\norm w_{\Ltwosp{I, \HHilb}}^2}{\norm{u_w}_{\YY_{H_0}}}
        =   \norm w_{\Ltwosp{I, \HHilb}} \frac{L(u_w)}{\norm{u_w}_{\YY_{H_0}}}   \\
        &\geq	\frac{\alpha}{(1+T)^2} \norm w_{\Ltwosp{I, \HHilb}},
    \end{aligned}
    \]
    which yields the desired result.
\end{proof}

For $u_0 \in \HHilb$ and $f \in \Ltwosp{I, \HHilb}$, we denote by $i \delta_{t=0} \otimes u_0 + f$ the element of $\antidualspace{\YY_{H_0}}$ defined by
\begin{equation}
    \forall u \in \YY_{H_0}, \quad
    \dualitybracket{u}{i \delta_{t=0} \otimes u_0 + f}_{\YY_{H_0} \times \antidualspace{\YY_{H_0}}}
    =   i \dotprodbracket{u(0)}{u_0} + \dotprodparenthesis{u}{f}_{\Ltwosp{I, \HHilb}}
\end{equation}
We have the following proposition which connects the Schrödinger evolution equation and the operators $\adjoint S$ and $\adjoint S_0$.
\begin{proposition} \label{prop: S-star connection with the PDE}
\begin{itemize}
    \item[(i)] The function $u \in \Ltwosp{I, \HHilb}$ solves \eqref{equ: Schrodinger evolution} in the sense of Definition~\ref{def: definition weak solution} if and only if
    \[
        \adjoint S u = (\adjoint S_0 + A + B(t)) u
            = i \delta_0 \otimes u_0 + f,
    \]
    or equivalently
    \[
        ({\rm Id} + (\adjoint S_0)^{-1} (A + B(t))) u
            = (\adjoint S_0)^{-1}(i \delta_0 \otimes u_0 + f).
    \]
    \item[(ii)] For any $u_0 \in \HHilb$ and $f \in \Ltwosp{I, \HHilb}$, we have
    \begin{equation}
        (\adjoint S_0)^{-1}(i \delta_0 \otimes u_0 + f)(t, x)
        =	\eexp^{-itH_0} u_0 - i \int_0^t \diff s \eexp^{-i(t-s)H_0} f(s).
    \end{equation}
\end{itemize}
\end{proposition}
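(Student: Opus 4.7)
My plan is to rely mainly on the integration-by-parts formula \eqref{equ: XX IBP}, together with the identifications $\XX_H = \XX_{H_0}$ and $\YY_H = \YY_{H_0}$ from \Cref{th: XX0 XX equality} and the Duhamel formula \eqref{equ: Duhamel formula} in \Cref{prop: XX properties}. Throughout, I keep in mind that the antiduality pairing $\dualitybracket{\cdot}{\cdot}_{\YY_{H_0} \times \antidualspace{\YY_{H_0}}}$ is antilinear in its first entry and linear in its second, so swapping the arguments amounts to taking a complex conjugate -- which is what will eventually reconcile the signs coming from the $i\partial_t$.

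For the forward implication of (i), I would take $u$ a weak solution of \eqref{equ: Schrodinger evolution}: \Cref{prop: XX properties} combined with \Cref{th: XX0 XX equality} gives $u \in \XX_H = \XX_{H_0}$ with $u(0) = u_0$ and $(i\partial_t - H - B(t)) u = f$ in $\Ltwosp{I, \HHilb}$. Applying \eqref{equ: XX IBP} to $u$ and any $v \in \YY_H = \YY_{H_0}$ (so that $v(T) = 0$), using the self-adjointness of $B(t)$ and taking a complex conjugate, produces exactly
\[
    \dotprodparenthesis{(i\partial_t - H - B(t)) v}{u}_{\Ltwosp{I, \HHilb}}
    = i \dotprodbracket{v(0)}{u_0} + \dotprodparenthesis{v}{f}_{\Ltwosp{I, \HHilb}},
\]
which by \eqref{equ: S-star definition} reads $\adjoint S u = i \delta_{t=0} \otimes u_0 + f$.

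For the converse direction, I would first test the assumed identity against $\varphi \in \Ccinfsetsp{I, \domain H} \subset \YY_{H_0}$ (for which $\varphi(0) = \varphi(T) = 0$); this recovers condition (C1) of \Cref{def: definition weak solution}, which implies in particular that $u \in \XX_H = \XX_{H_0}$ and that $u(0)$ makes sense. A second use of \eqref{equ: XX IBP}, compared with the assumed identity, then yields $\dotprodbracket{v(0)}{u(0) - u_0} = 0$ for every $v \in \YY_{H_0}$, and since the trace map $v \mapsto v(0)$ surjects onto $\HHilb$ (take $v(t) = (1 - t/T) \eexp^{-itH_0} \psi$ for arbitrary $\psi \in \HHilb$), this forces $u(0) = u_0$, which is (C2). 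The equivalence with the second displayed formulation in (i) then reduces to the formal decomposition $\adjoint S = \adjoint S_0 + (A + B(t))$ -- where $(A+B(t))u \in \antidualspace{\YY_{H_0}}$ is understood as the antilinear form $v \mapsto \dotprodparenthesis{(A+B(t))v}{u}_{\Ltwosp{I, \HHilb}}$, well-defined since $(A+B(t))v \in \Ltwosp{I, \HHilb}$ for $v \in \YY_{H_0}$ thanks to \Cref{prop: Kato Smoothing cheap} -- combined with the invertibility of $\adjoint S_0$ granted by \eqref{equ: S-star coercivity estimates}. Part (ii) is then an immediate application of (i) in the special case $A = 0$, $B = 0$, $H = H_0$: \eqref{equ: Duhamel formula} identifies the right-hand side of (ii) as the unique weak solution $u^*$ of $(i\partial_t - H_0) u^* = f$ with $u^*(0) = u_0$, so that $\adjoint S_0 u^* = i \delta_{t=0} \otimes u_0 + f$, and the claim follows after applying $(\adjoint S_0)^{-1}$.

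The main obstacle is essentially bookkeeping: the complex conjugations dictated by the antiduality convention must be carefully tracked through \eqref{equ: XX IBP}, and one must justify -- via the test-against-$\Ccinfsetsp{I, \domain H}$-then-invoke-\Cref{prop: XX properties} argument used in the reverse direction of (i) -- that $u \in \Ltwosp{I, \HHilb}$ satisfying $\adjoint S u = i \delta_{t=0} \otimes u_0 + f$ automatically lies in $\XX_H$, so that \eqref{equ: XX IBP} and the trace $u(0)$ are both legitimate.
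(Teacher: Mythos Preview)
Your approach is correct and matches the paper's proof closely: both use the integration-by-parts formula \eqref{equ: XX IBP} for the forward direction, test against compactly supported functions to establish $u \in \XX_H$ in the reverse direction and then apply \eqref{equ: XX IBP} once more to extract the initial condition, and derive (ii) as the special case $A=0$, $B=0$. The only cosmetic difference is that in the reverse direction the paper tests against $\eexp^{-itH}w$ for $w \in \Ccinfsetsp{I, \HHilb}$ (working through the isomorphism $\XX_H \cong \Honesp{I, \HHilb}$) rather than directly against $\varphi \in \Ccinfsetsp{I, \domain H}$ as you do.
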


\begin{proof}
    Assume $u$ solves \eqref{equ: Schrodinger evolution}. Then $u \in \XX_H$ and we have for any $v \in \YY_{H_0}$ (using the ``integration by part'' formula \eqref{equ: XX IBP})
    \[
    \begin{aligned}
        \dualitybracket{v}{\adjoint S u}_{\YY_{H_0} \times \antidualspace{\YY_{H_0}}}
        &=	\dotprodbracket{(i\partial_t - H - B(t))v}{u}_{\Ltwosp{I, \HHilb}}
        =	\int_I \diff t \dotprodbracket{(i \partial_t - H - B(t)) v}{u}	\\
        &=	\dotprodbracket{v(0)}{i \underbrace{u(0)}_{=u_0}} + \dotprodbracket{v}{\underbrace{(i\partial_t - H - B(t)) u}_{= f}}_{\Ltwosp{I, \HHilb}}	\\
        &=	\dualitybracket{v}{i \delta_{t=0} \otimes u_0 + f}_{\YY_{H_0} \times \antidualspace{\YY_{H_0}}}.
    \end{aligned}
    \]
    
    Conversely, assume $\adjoint S u = i \delta_{t=0} \otimes u_0 + f$. Then, for any $w \in \Ccinfsetsp{I, \HHilb} \subset \YY_{H_0}$,
    \[
    \begin{aligned}
        \dotprodparenthesis{i\partial_t w}{\eexp^{itH}u}_{\Ltwosp{I, \HHilb}}
        &=	\dotprodparenthesis{\eexp^{-itH}i\partial_t w}{u}_{\Ltwosp{I, \HHilb}}
        =	\dotprodparenthesis{(i\partial_t - H) \eexp^{-itH} w}{u}_{\Ltwosp{I, \HHilb}}	\\
        &=	\dualitybracket{\eexp^{-itH} w}{\adjoint S u}_{\YY_{H_0} \times \antidualspace{\YY_{H_0}}}
        =	\dotprodbracket{\underbrace{w(0)}_{=0}}{i u_0} + \dotprodparenthesis{\eexp^{-itH} w}{f}_{\Ltwosp{I, \HHilb}}	\\
        &=	\dotprodparenthesis{w}{\eexp^{itH} f}_{\Ltwosp{I, \HHilb}}.
    \end{aligned}
    \]
    This means that $\eexp^{itH} u \in \Honesp{I, \HHilb}$ with $i\partial_t \eexp^{itH} u = \eexp^{itH} f$, hence $u \in \XX_H$ and $(i\partial_t - H) u = f$.
    Therefore, for any $v \in \YY_{H_0}$,
    \[
    \begin{aligned}
        \dotprodparenthesis{(i\partial_t - H) v}{u}_{\Ltwosp{I, \HHilb}}
        =
        \begin{cases}
            \dotprodbracket{v(0)}{i u_0} + \dotprodparenthesis{v}{f}_{\Ltwosp{I, \HHilb}},	\\
            \dotprodbracket{v(0)}{i u(0)} + \dotprodparenthesis{v}{(i\partial_t - H) u}_{\Ltwosp{I, \HHilb}} = \dotprodbracket{v(0)}{i u(0)} + \dotprodparenthesis{v}{f}_{\Ltwosp{I, \HHilb}}.
        \end{cases}
    \end{aligned}
    \]
    It follows that
    \[
        \forall v \in \YY, \quad
        \dotprodbracket{i u(0)}{v(0)} = \dotprodbracket{i u_0}{v(0)},
    \]
    and finally
    \[
        u(0) = u_0.
    \]
    
    The second part of the proposition is just the consequence of the first with $A=0$ and $B(t) = 0$ for all $t \in \setclosure I$.
\end{proof}

The following corollary gives an alternative variational formulation for the solution to the time-dependent Schrödinger equation.
\begin{corollary}\label{cor:L2}
    Let $u_0 \in \HHilb$, $f \in \Ltwosp{I, \HHilb}$.
    The functional
    \begin{equation}
        E^*(u)
        =	\norm{u(t) + i \int_0^t \diff s \eexp^{-i(t-s)H_0}(Au) - \left( \eexp^{-itH_0} u_0 - i \int_0^t \diff s \eexp^{-i(t-s)H_0} f(s) \right)}_{\Ltwosp{I, \HHilb}}^2
    \end{equation}
    is a strongly convex quadratic functional defined on $\Ltwosp{I, \HHilb}$ which satisfies
    \begin{equation}
        \forall u \in \Ltwosp{I, \HHilb}, \quad
        \frac{\alpha}{(1+T)^2} \norm{u - u^*}_{\Ltwosp{I, \HHilb}} \leq \sqrt{E^*(u)} \leq C(1+T) \norm{u - u^*}_{\Ltwosp{I, \HHilb}}
    \end{equation}
    where $u^*$ is the solution to \eqref{equ: Schrodinger evolution} in the sense of Definition \ref{def: definition weak solution}.
\end{corollary}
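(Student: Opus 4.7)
The approach is to identify $E^*(u)$ with a squared distance measured by the operators $\adjoint S$ and $\adjoint S_0$ introduced earlier in Section~\ref{subsec: Dual formulation}, and then leverage the coercivity estimates \eqref{equ: S-star coercivity estimates}. The first step is to observe, directly from the definitions \eqref{equ: S-star definition} and \eqref{equ: S_0-star definition} together with the symmetry of $A + B(t)$, the operator identity $\adjoint S = \adjoint S_0 - \iota(A + B(t))$ acting from $\Ltwosp{I, \HHilb}$ to $\antidualspace{\YY_{H_0}}$, where $\iota:\Ltwosp{I,\HHilb} \hookrightarrow \antidualspace{\YY_{H_0}}$ denotes the canonical embedding by the inner product. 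Combining this with the Duhamel representation of $(\adjoint S_0)^{-1}$ given by Proposition~\ref{prop: S-star connection with the PDE}(ii), I would verify that the integrand defining $E^*(u)$ is exactly
\[
    (\adjoint S_0)^{-1}\bigl( \adjoint S u - (i \delta_{t=0} \otimes u_0 + f) \bigr) \in \Ltwosp{I,\HHilb}.
\]

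Next, I would show that $\adjoint S_0: \Ltwosp{I, \HHilb} \to \antidualspace{\YY_{H_0}}$ is an isometric isomorphism. This is a direct consequence of the inf--sup argument already used in the proof of \eqref{equ: S-star coercivity estimates}, specialized to the free case $A = 0$, $B = 0$: in that situation $L(u)$ from~\eqref{equ: L definition} coincides with $\norm{u}_{\YY_{H_0}}$, so both upper and lower coercivity constants collapse to $1$. Using this isometry and then rewriting the Schrödinger equation via Proposition~\ref{prop: S-star connection with the PDE}(i) as $\adjoint S u^* = i \delta_{t=0} \otimes u_0 + f$, one obtains
\[
    \sqrt{E^*(u)} = \norm{(\adjoint S_0)^{-1}\adjoint S(u - u^*)}_{\Ltwosp{I, \HHilb}} = \norm{\adjoint S(u - u^*)}_{\antidualspace{\YY_{H_0}}}.
\]
The two-sided estimate of the corollary then follows immediately by applying \eqref{equ: S-star coercivity estimates} to $u - u^*$. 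The quadratic character of $E^*$ on $\Ltwosp{I, \HHilb}$ is clear from the formula, and the strong convexity follows from the lower coercivity bound.

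The main (but modest) obstacle is the rigorous bookkeeping of sign conventions in the antidual pairing $\dualitybracket{\cdot}{\cdot}_{\YY_{H_0} \times \antidualspace{\YY_{H_0}}}$, in particular when establishing the factorization $\adjoint S = \adjoint S_0 - \iota(A + B(t))$, when identifying $\Ltwosp{I, \HHilb}$ as a subspace of $\antidualspace{\YY_{H_0}}$ through $\iota$, and when checking that the resulting Duhamel formula has the correct sign to match the $+i \int_0^t$ appearing inside $E^*$. Once these conventions are fixed, the proof reduces entirely to the reuse of Proposition~\ref{prop: S-star connection with the PDE} and of the estimates \eqref{equ: S-star coercivity estimates} already established.
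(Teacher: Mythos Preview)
Your proposal is correct and follows essentially the same route as the paper's proof: both rewrite the integrand of $E^*(u)$ as $(\adjoint S_0)^{-1}\adjoint S(u-u^*)$, invoke that $\adjoint S_0$ is an isometry from $\Ltwosp{I,\HHilb}$ onto $\antidualspace{\YY_{H_0}}$, and conclude via the estimates \eqref{equ: S-star coercivity estimates}. Your treatment is slightly more explicit about the factorization $\adjoint S = \adjoint S_0 - \iota(A+B(t))$ and the isometry argument, but the substance is identical.
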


\begin{proof}[Proof of Corollary~\ref{cor:L2}]
    Let $u^*$ denote the solution to \eqref{equ: Schrodinger evolution}.
    It follows from Proposition \ref{prop: S-star connection with the PDE} that
    \[
        \eexp^{-itH_0} u_0 - i \int_0^t \diff s \eexp^{-i(t-s)H_0} f(s)
        =	(S_0^*)^{-1}(\delta_0 \otimes u_0 + f)
        =	(S_0^*)^{-1} S^* u^*,
    \]
    and
    \[
        u + i \int_0^t \diff s \eexp^{-i(t-s)H_0}(Au)
        =	u - (S_0^*)^{-1} Au
        =	(S_0^*)^{-1} S^* u.
    \]
    Therefore, since clearly $S_0^*$ is an isometry between $\YY_{H_0}$ and $\antidualspace{\YY_{H_0}}$,
    \[
        E^*(u)
        =	\norm{(S_0^*)^{-1} S^*(u - u^*)}_{\Ltwosp{I, \HHilb}}^2
        =	\norm{S^*(u - u^*)}_{\antidualspace{\YY_{H_0}}}^2,
    \]
    and the desired estimate is a consequence of \eqref{equ: S-star coercivity estimates}.
\end{proof}

\section{Conclusion and perspectives }\label{sec:low-rank}

In this work, we establish a space-time variational formulation for the Schrödinger evolution equation~\eqref{equ: Schrodinger evolution}. 
This formulation encompasses the case of many-body electronic Coulomb interaction, or more general charge densities (see Remark~\ref{rem: general density}), with a bounded time-dependent potential.

\medskip

As mentioned in the introduction, this variational formulation was mainly developed to provide a numerically achievable way to compute low complexity approximations of the solution to \eqref{equ: Schrodinger evolution}.
This will be the topic of a future work. We describe the main ideas here.
One key ingredient is the following proposition:
\begin{proposition}\label{prop:sigma}
    Let $\Sigma$ be any non empty weakly closed subset of $\HHilb$. Then the subset
    \[
        \Honesp{I, \Sigma} = \set{u \in \Honesp{I, \HHilb}}{u(t) \in \Sigma \text{ for any $t \in I$}}
    \]
    is closed in $\Honesp{I, \HHilb}$ for the weak topology.
\end{proposition}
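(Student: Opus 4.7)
The plan is to express $\Honesp{I, \Sigma}$ as an intersection of preimages of $\Sigma$ under pointwise evaluation maps, and then invoke the weak-to-weak continuity of bounded linear operators.

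First I would recall that the Sobolev embedding $\Honesp{I, \HHilb} \hookrightarrow \Czerosetsp{\setclosure I, \HHilb}$ holds continuously (this is standard for Bochner spaces on a bounded one-dimensional interval, and also follows from the same arguments used to prove Proposition~\ref{prop: XX properties}). Consequently, for every $t \in I$, the pointwise evaluation
\[
    e_t : \Honesp{I, \HHilb} \to \HHilb, \qquad u \mapsto u(t),
\]
is a well-defined bounded linear operator.

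Second, I would use the elementary fact that any bounded linear operator between Banach spaces is continuous from the weak topology on the domain to the weak topology on the codomain. In particular, each $e_t$ is continuous when both spaces are endowed with their respective weak topologies. Since $\Sigma$ is weakly closed in $\HHilb$ by assumption, the preimage $e_t^{-1}(\Sigma)$ is weakly closed in $\Honesp{I, \HHilb}$ for every $t \in I$.

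Finally, I would rewrite
\[
    \Honesp{I, \Sigma} = \bigcap_{t \in I} e_t^{-1}(\Sigma),
\]
and observe that an arbitrary intersection of weakly closed sets is weakly closed, which concludes the proof. There is essentially no technical obstacle: the only point worth highlighting is the well-definedness of $e_t$, which relies on the continuous embedding into $\Czerosetsp{\setclosure I, \HHilb}$; once this is in hand, the topological argument is automatic and avoids any appeal to sequential characterizations of weak closedness (which would require extra care since $\Sigma$ is not assumed convex).
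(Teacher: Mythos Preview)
Your proof is correct and rests on the same two ingredients as the paper's: the continuous embedding $\Honesp{I, \HHilb} \hookrightarrow \Czerosetsp{\setclosure I, \HHilb}$, which makes each evaluation map $e_t$ bounded linear, and the weak-to-weak continuity of bounded linear operators. The paper, however, argues sequentially---it takes a weakly convergent sequence $(v_n)$ in $\Honesp{I, \Sigma}$ and deduces $v_n(t) \rightharpoonup v(t) \in \Sigma$---which strictly speaking only establishes weak \emph{sequential} closedness. Your intersection-of-preimages formulation works directly at the level of the weak topology and therefore proves the proposition exactly as stated; this is a genuine gain in rigor (and you correctly flag the issue), although for the intended application of extracting a minimizer from a minimizing sequence, sequential closedness is already enough.
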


\begin{proof}
    Since $\Honesp{I, \HHilb} \hookrightarrow \Cregsetsp{0}{I, \HHilb}$, for any $t \in I$ the linear application
    \[
        \fctlongdef{T_t}{\Honesp{I, \HHilb}}{\HHilb}{v}{v(t)}
    \]
    is continuous. \\
    Let $(v_n)_{n \geq 0} \subset \Honesp{I, \Sigma}$ be a sequence which weakly converges to some $v$ in $\Honesp{I, \HHilb}$. Since any bounded linear operator between Banach spaces is also weakly continuous, this implies that for any $t \in I$,
    \[
        v_n(t) = T_t v_n \mathop{\rightharpoonup}_{n\to +\infty} T_t v = v(t) \quad
        \text{in $\HHilb$},
    \]
    and the weak closedness of $\Sigma$ implies $v(t) \in \Sigma$.
\end{proof}

One possible interesting choice for the set $\Sigma$ is some given set of functions which can be represented with low complexity, such as tensor formats, for instance the manifold of tensor trains with at most a given rank, or well-chosen neural network architectures. Proposition~\ref{prop:sigma} implies that, for any value of the final time $T>0$, there exists at least one minimizer to the minimization problem
\begin{equation}\label{eq:newVP}
u_\Sigma^*  \in  \argmin_{v\in \Honesp{I, \Sigma}} F(v)
\end{equation}
where $F$ is defined in \eqref{equ: F definition}.

As a consequence, for instance when $\Sigma$ is given as some low-rank tensor format, the variational principle studied here enables to obtain the existence of a dynamical low-rank approximation of $u^*$ whatever the value of the final time $T$. This approximation is thus obtained through the variational principle~(\ref{eq:newVP}) which is different from the classical Dirac-Frenkel one~\cite{Conte_Lubich_2010}, for which, at least up to our knowledge, only local-in-time existence of solutions has been proved in the general case.
However, we emphasize here that, while some quantities are preserved by the exact Schrödinger dynamics \eqref{equ: Schrodinger evolution}, these same quantities are not necessarily preserved for the solution to \eqref{eq:newVP}.
The comparison of both types of dynamical low-rank approximations will be the object of a forthcoming article.

The alternative variational formulation presented in Section~\ref{subsec: Dual formulation} also exhibits some interesting properties for dynamical low-rank approximations.
Notably, it allows discontinuous functions in time, which is useful since the best low-rank tensor approximation of $u^*(t)$ is not always continuous in $t$.
However, the existence of a global-in-time low-rank dynamical approximation of $u^*$ using this formulation is not guaranteed in general, in contrast to the formulation presented in Section~\ref{subsec: Abstract setting}.
More importantly, from a practical standpoint, it typically results in higher computational costs due to the presence of a time integral.

\section*{Acknowledgements}

This publication is part of a project that has received funding from the European Research Council (ERC) under the European Union’s Horizon 2020 Research and Innovation Programme – Grant Agreement
n$^\circ$ 101077204. We are also very thankful to Caroline Lasser, Christian Lubich and Elena Celledoni for enlightening discussions on this work.
We thank the language model \textit{Le Chat} for its assistance in refining formulations during the writing process.

\appendix

\section{Numerical implementation with spectral methods} \label{sec: Numerical implementation}

    The aim of this section is to illustrate the interest of the proposed variational formulation for numerical purposes on some simple test cases. While our main motivation stems from the design of dynamical low-rank approximation schemes, we illustrate here the behaviour of Galerkin global-space time discretization schemes stemming from the variational formulation presented here. The latter mainly rely on the results of Section~\ref{sec: Space-time variational formulation for the Schrödinger equation}, and in particular Corollary \ref{cor: Variational formulation v}.
    The code can be found at \cite{guillot_2024_11353969}.
    
    In Section~\ref{subsec: Time regularity of the solution}, we prove a regularity result of the solutions with respect to the time variable for smooth interaction time-dependent potentials, which motivates the use of spectral methods in this simple case.
    We take advantage of this additional smoothness, coupled with the global space-time formulation obtained in Corollary \ref{cor: Variational formulation v}, to apply spectral methods to the time variable.

    In Section~\ref{subsec: An ordinary differential equation}, we begin with an elementary ordinary differential equation. Although simple, this first example is an opportunity to present the basic principles of the spectral discretization for the time variable in detail. We also give an example of a simple but efficient preconditioner which will also be useful for the next example.

    In Section~\ref{subsec: Periodic Schrödinger equation}, we present the numerical behaviour of the scheme applied to the simulation of the evolution of a 2D Schrödinger equation with periodic boundary conditions.

\subsection{A first basic example} \label{subsec: An ordinary differential equation}

Consider the ordinary differential equation ($a, \omega \in \RR$)
\begin{equation}
\begin{cases}
    i(u^*)'(t) = a (\cos \omega t) u^*(t),	\\
    u^*(0) = \eta_0 \in \CC.
\end{cases}
\end{equation}
The solution admits the explicit expression $u^*(t) = z_0 \eexp^{-i a \frac{\sin \omega t}{\omega}}$.

We will compute an approximation of this solution on the time interval $(-1,1)$.
We claim that the results established in the paper for $(0,T)$ extend straightforwardly to $(-T,T)$, due to the unitary nature of $\eexp^{-itH}$ when $H$ is a self-adjoint operator.
Corollary \ref{cor: Variational formulation v} (with $H_0 = A = 0$, and $B(t) = a (\cos \omega t)$) leads to the following problem:
\begin{equation}
    \min_{u \in \Hone(-1, 1)} \left( \abs{u(0) - \eta_0}^2 + \int_{-1}^1 \diff t \abs{iu'(t) - a (\cos \omega t) u(t)}^2 \right).
\end{equation}
However, in order to make the numerical computations easier, we will consider instead the minimization problem
\begin{equation} \label{equ: ODE minimization problem}
    \min_{u \in \Hone_w(-1, 1)} E_w(u), \quad
    \text{with } E_w(u) := \abs{u(0) - \eta_0}^2 + \int_{-1}^1 \diff t w(t) \abs{iu'(t) - a (\cos \omega t) u(t)}^2,
\end{equation}
where $w(t) = \sqrt{1 - t^2}$, and $u \in \Hone_w(-1, 1)$ means that $\int_{-1}^1 \diff t w(t)( \abs{u(t)}^2 + \abs{u'(t)}^2 )< \infty$.
For convenience, we extended the problem to the whole time interval $(-1, 1)$, since it is the natural domain for the Chebyshev polynomials. The alternative would be to rescale the problem posed on $(0, 1)$ to obtain a new problem posed on $(-1, 1)$.
In this case, the error estimate
\[
    \norm{u - u^*}_{\Czerosetsp{(-1, 1)}}
    \leq	\sqrt{2 E_w(u)}
\]
does not hold on $(-1, 1)$, but it can be replaced in this case by the similar estimate
\begin{equation}
    \norm{u - u^*}_{\Czerosetsp{(-1, 1)}}
    \leq    \sqrt{2 \pi E_w(u)}
\end{equation}
obtained as follows : for any $u \in \Hone_w(-1, 1)$, and any $0 \leq t < 1$,
\[
\begin{aligned}
    \abs{u(t)}^2
    &=    \abs{u(0)}^2 + 2 \int_0^t \diff s \Im \left( \conjugate{(iu'(s) - a(\cos \omega s) u(s))} u(s) \right)   \\
    &\leq   \abs{u(0)}^2 + 2 \int_0^t \diff s \abs{iu'(s) - a(\cos \omega s) u(s)} \abs{u(s)}   \\
    &\leq   \abs{u(0)}^2 + 2 (\sup_{0 \leq s \leq t} \abs{u(s)}) \left( \int_0^1 \frac{\diff s}{w(s)} \right)^{\frac 1 2} \left( \int_0^t \diff s w(s) \abs{iu'(s) - a(\cos \omega s) u(s)}^2 \right)^{\frac 1 2}   \\
    &\leq   \abs{u(0)}^2 + \frac 1 2 \sup_{0 \leq s \leq t} \abs{u(s)}^2 + 2 \underbrace{\left( \int_0^1 \frac{\diff s}{w(s)} \right)}_{= \frac \pi 2} \left( \int_0^t \diff s w(s) \abs{iu'(s) - a(\cos \omega s) u(s)}^2 \right).
\end{aligned}
\]
Since a similar estimate can be obtained for $-1 < t \leq 0$, the result follows.

We introduce $(T_k)_{k \geq 0}$ and $(U_k)_{k \geq 0}$ the Chebyshev polynomials of the first and second kind respectively. We recall in particular the orthogonality relation:
\begin{equation} \label{equ: Chebyshev prop orthogonality type 2}
    \forall k, l,
    \quad \int_{-1}^1 \diff t w(t) U_k(t) U_{\ell}(t) = \delta_{k\ell} \frac{\pi}{2}
\end{equation}

Fix some $K \geq 0$ and consider the discrete space
\[
    X^K
    =	\set{u(t) = \sum_{k=0}^{K-1} u_k T_k(t)}{\boldsymbol{u}:=(u_k)_{k=0}^{K-1} \subset \CC}
    \subset \Hone_w(-1, 1).
\]
For any $u\in X^K$ and associated coordinates $\boldsymbol{u} = (u_k)_{k=0}^{K-1}\in \CC^K$, we wish to compute $u(0)$ and an approximation of $iu'(t) - a (\cos \omega t) u(t)$ of the form $g(t) = \sum_{\ell=0}^{L-1} g_\ell U_\ell(t) $ for some $L \geq K$ and coefficients $(w_k)_{k=0}^{L-1}\in \CC^L$. 
\begin{itemize}
    \item For $u(0)$, we can simply write
    \[
        u(0) = \sum_{k=0}^{K-1} u_k T_k(0),
    \]
    which motivates the introduction of the vector
    \begin{equation}
        \boldsymbol{\mathcal J}^K
        =
        \begin{pmatrix}
            T_0(0) & T_1(0) & \ldots & T_{K-1}(0)
        \end{pmatrix}
        \in	\RR^{1 \times K},
    \end{equation}
    so that $u(0) = \boldsymbol{\mathcal J}^K \boldsymbol{u}$.
    \item Define
    \begin{equation}
        \boldsymbol{\mathcal P}^{L, K}
        =
        \begin{pmatrix}
            I_K \\ 0_{(L-K) \times K}
        \end{pmatrix}
    \end{equation}
    the extension operator where $I_K$ is the $K \times K$ identity matrix, and $0_{(L-K) \times K}$ the $(L-K) \times K$ null matrix.
    
    \item The term $iu'(t)$ is easy to compute because of the identity $\forall k, \, T_{k+1}'= (k+1)U_k$. We define the corresponding matrix
    \begin{equation}
       \boldsymbol{\mathcal D}^K
        =
        \begin{pmatrix}
            0	&	i	&	0	&	\ldots	&	0	\\
            0	&	0	&	2i	&	\ldots	&	0	\\
            \vdots	&	\vdots	&	\vdots	&	\ddots	&	\vdots	\\
            0	&	0	&	0	&	\ldots	&	(K-1)i	\\
            0	&	0	&	0	&	\ldots	&	0	\\
        \end{pmatrix}.
    \end{equation}
    
    \item To approximate the term $a (\cos \omega t) u(t)$, we rely on the following method : let $\fctshortdef{\boldsymbol \pi^L}{\CC^L}{\CC^L}$ be the ``collocation operator with L points'', that is,
    \[
        (\boldsymbol \pi^L \mathbf u)_{\ell}
        =	\sum_{k=0}^{L-1} u_k T_k(x_{\ell}),
    \]
    where $(x_\ell)_{\ell=0}^{L-1}$ are the Gauss-Chebyshev nodes.
    We know that there exist positive weights $(\mu_{\ell})_{l=0}^{L-1}$ such that the quadrature formula
    \[
        \int_{-1}^1 \frac{\diff t}{w(t)} P(t)
        \approx	\sum_{l=0}^{L-1} \mu_{\ell} P(x_{\ell}),
    \]
    is exact whenever $P$ is a polynomial with degree $\leq 2L-1$. \\
    Let also $\boldsymbol{\mathcal A}^L$ be the operator $\CC^L \to \CC^L$ such that
    \[
        \forall \boldsymbol z := (z_{\ell})_{l=0}^{L-1}, \quad
        (\boldsymbol{\mathcal A^L} \boldsymbol z)_{\ell} = a (\cos \omega x_{\ell}) z_{\ell}
    \]
    The last operation required is the transformation from the first-kind series to the second-kind series, which can be expressed in coordinates as follows thanks to the identities $\forall k \geq 2, \,T_k = \frac{U_k - U_{k-2}}{2}$, $T_1 = \frac{U_1}{2}$, and $T_0 = U_0$:
    \begin{equation}
       \boldsymbol{\mathcal C}^L
        =
        \begin{pmatrix}
            1	&	0	&	- \frac 1 2	&	\ldots	&	\ldots	&	\ldots	&	0	\\
            0	&	\frac 1 2	&	0	&	\ddots	&		&		&	0	\\
            0	&	0	&	\frac 1 2	&	\ddots	&	\ddots	&		&	0	\\
            \vdots	&	\vdots	&	\vdots	&	\ddots	&	\ddots	&	\ddots	&	\vdots	\\
            0	&	0	&	0	&		&	\ddots	&	\ddots	&	-\frac 1 2	\\
            0	&	0	&	0	&		&		&	\ddots	&	0	\\
            0	&	0	&	0	&	\ldots	&	\ldots	&	\ldots	&	\frac 1 2	\\
        \end{pmatrix}.
    \end{equation}
\end{itemize}

The matrix of the quadratic part of \eqref{equ: ODE minimization problem} can therefore be written
\begin{equation} \label{equ: Discrete Chebyshev quadratic form}
\begin{aligned}
    \boldsymbol{\mathcal Q}_{K, L}
    &=
    \frac \pi 2 \adjoint{\left( \boldsymbol{\mathcal P}^{L, K} \boldsymbol{\mathcal D}^K - \boldsymbol{\mathcal C}^L (\boldsymbol \pi^L)^{-1} \boldsymbol{\mathcal A}^L \boldsymbol \pi^L \boldsymbol{\mathcal P}^{K, L} \right)} \left( \boldsymbol{\mathcal P}^{L, K} \boldsymbol{\mathcal D}^K - \boldsymbol{\mathcal C}^L (\boldsymbol \pi^L)^{-1} \boldsymbol{\mathcal A}^L \boldsymbol \pi^L \boldsymbol{\mathcal P}^{K, L} \right)   \\
    &\quad \quad +	\adjoint{(\boldsymbol{\mathcal J}^K)} \boldsymbol{\mathcal J}^K
\end{aligned}
\end{equation}

The only non explicit part of the above formula is the adjoint of $(\boldsymbol \pi^L)^{-1} \boldsymbol{\mathcal A}^L \boldsymbol \pi^L$, which we compute as follows :
Let $\boldsymbol C = diag(\frac 1 2, 1 ..., 1) \in \RR^{L \times L}$.
Fix $\boldsymbol u = (u_\ell)_{\ell=0}^{L-1}$ and $\boldsymbol v = (v_\ell)_{\ell=0}^{L-1}$.
Then
\[
\begin{aligned}
    \sum_{\ell=0}^{L-1} \conjugate{((\boldsymbol \pi^L)^{-1} \boldsymbol{\mathcal A}^L \boldsymbol \pi^L u)_\ell} v_\ell
    &=	\frac 2 \pi \int \frac{\diff t}{w(t)} \conjugate{\left( \sum_{\ell=0}^{L-1} ((\boldsymbol \pi^L)^{-1} \boldsymbol{\mathcal A}^L \boldsymbol \pi^L \boldsymbol u)_\ell T_l(t) \right)} \left( \sum_{\ell=0}^{L-1} (\boldsymbol C \boldsymbol v)_\ell T_l(t) \right)   \\
    &=	\frac 2 \pi \sum_{l=0}^{L-1} \mu_{\ell} \conjugate{(\boldsymbol{\mathcal A} \boldsymbol \pi^L \boldsymbol u)_{\ell}} (\boldsymbol \pi^L \boldsymbol C \boldsymbol v)_{\ell}
    =	\frac 2 \pi \sum_{\ell=0}^{L-1} \mu_{\ell} a (\cos \omega x_{\ell}) \conjugate{(\boldsymbol \pi^L \boldsymbol u)_{\ell}} (\boldsymbol \pi^L \boldsymbol C \boldsymbol v)_{\ell}   \\
    &=	\frac 2 \pi \sum_{\ell=0}^{L-1} \mu_{\ell} \conjugate{(\boldsymbol \pi^L \boldsymbol u)_{\ell}} (\boldsymbol{\mathcal A} \boldsymbol \pi^L \boldsymbol C \boldsymbol v T_k)_{\ell}   \\
    &=	\frac 2 \pi \int \frac{\diff t}{w(t)} \conjugate{\left( \sum_{\ell=0}^{L-1} u_\ell T_l(t) \right)} \left( \sum_{\ell=0}^{L-1} ((\boldsymbol \pi^L)^{-1} \boldsymbol{\mathcal A}^L \boldsymbol \pi^L \boldsymbol C \boldsymbol v)_\ell T_l(t) \right)   \\
    &=	\sum_{\ell=0}^{L-1} \conjugate{u_\ell} (\boldsymbol C^{-1} (\boldsymbol \pi^L)^{-1} \boldsymbol{\mathcal A} \boldsymbol \pi^L \boldsymbol C \boldsymbol v)_\ell,
\end{aligned}
\]
and the desired transpose is therefore $\boldsymbol C^{-1} (\boldsymbol \pi^L)^{-1} \boldsymbol{\mathcal A} \boldsymbol \pi^L \boldsymbol C$.
We therefore see that the minimization problem \eqref{equ: ODE minimization problem} is equivalent to the linear system
\begin{equation} \label{equ: Discrete Chebyshev linear system}
    \boldsymbol{\mathcal Q}_{K, L} \boldsymbol u^*_{K, L} = \eta_0 \adjoint{(\boldsymbol{\mathcal J}^K)}.
\end{equation}
One may observe that computing $\boldsymbol{\mathcal Q}_{K, L} \boldsymbol u$ using \eqref{equ: Discrete Chebyshev quadratic form} only requires $\bigO{L \log L}$ operations, it is therefore tempting to try to solve \eqref{equ: Discrete Chebyshev linear system} with an iterative method such as the conjugate gradient. 
However, it turns out that the matrix $\boldsymbol{\mathcal Q}_{K, L}$ is in fact ill-conditioned, and the amount of iterations required to converge increases drastically with the size of $K$. This problem can be solved by introducing the matrix
\begin{equation}
    \boldsymbol Q_K^0
    =   \frac \pi 2 \adjoint{\left(\boldsymbol{\mathcal D}^K \right)} \boldsymbol{\mathcal D}^K
    +	\adjoint{(\boldsymbol{\mathcal J}^K)} \boldsymbol{\mathcal J}^K,
\end{equation}
which is simply the matrix associated with the ``free'' quadratic form
\[
    \abs{u(0)}^2 + \int_{-1}^1 \diff t w(t) \abs{i\partial_t u}^2.
\]
It turns out that the inverse of $\boldsymbol{\mathcal Q}_0$ can be easily computed as follows : let $F \in \RR^K$, then $\boldsymbol{\mathcal Q}_0 \boldsymbol u = F$ is equivalent to finding $u = \sum_{k=0}^K u_k T_k$ such that
\[
    \forall v = \sum_{k=0}^K v_k T_k, \quad
    \int_{-1}^1 \diff t w(t) (\conjugate{i\partial_t v})(i\partial_t u) + \conjugate{v(0)}u(0)
    =	\dotprodbracket{v}{F}_{\RR^{K+1}}
    =	\sum_{k=0}^K \conjugate{v_k} F_k.
\]
Taking $v = T_0$ yields $u(0) = F_0$, and then we obtain
\[
    \forall v = \sum_{k=0}^K k^2 v_k T_k, \quad
    \frac \pi 2 \sum_{k=1}^K \conjugate{v_k} u_k
    =	\sum_{k=0}^K \conjugate{v_k} F_k - \left( \sum_{k=0}^K \conjugate{v_k} T_k(0) \right) F_0,
\]
so we need to take
\[
    \forall 1 \leq k \leq K, \quad
    u_k
    =	\frac{2}{\pi k^2} (F_k - T_k(0) F_0).
\]
We then find $u_0$ thanks to the following equality :
\[
    F_0 = u(0) = \sum_{k=0}^K T_k(0) u_k.
\]

Figure \ref{fig: EDO error aliasing} shows $\norm{u^*_{K, L} - u^*}_{\Czerosetsp{(0, 1)}}$, computed by sampling at $1000$ uniformly distributed points as a function of $K$ for different choices of $L$. Since taking $L > K$ does not seem to significatively improve the convergence, in what follows we always take $L = K$ and write $u^*_K := u^*_{K, K}$.

On Figure \ref{fig: EDO error plot}, we display the difference with the exact solution computed at $1000$ uniformly distributed points on the interval $[0, 1]$, and compare it with an order 2 Crank-Nicolson scheme as a function of $K$. For the the Crank-Nicolson scheme, $K$ stands for the number of steps. As expected, the global in time Chebyshev approach exhibits spectral convergence, while the error evolves as $K^{-2}$ (convergence of order 2) for the Crank-Nicolson scheme.
A Crank-Nicolson scheme with $K$ steps is cheaper than the procedure described above with $K$ functions, therefore, a higher value of $K$ does not necessarily mean that the computation is more expensive. The evolution of the computation time will be explored in the next example.

\begin{figure}[h] \label{fig:Gaussian3D simulation}
	\centering
	\begin{subfigure}[t]{0.49\textwidth}
		\centering
		\includegraphics[width=\textwidth]{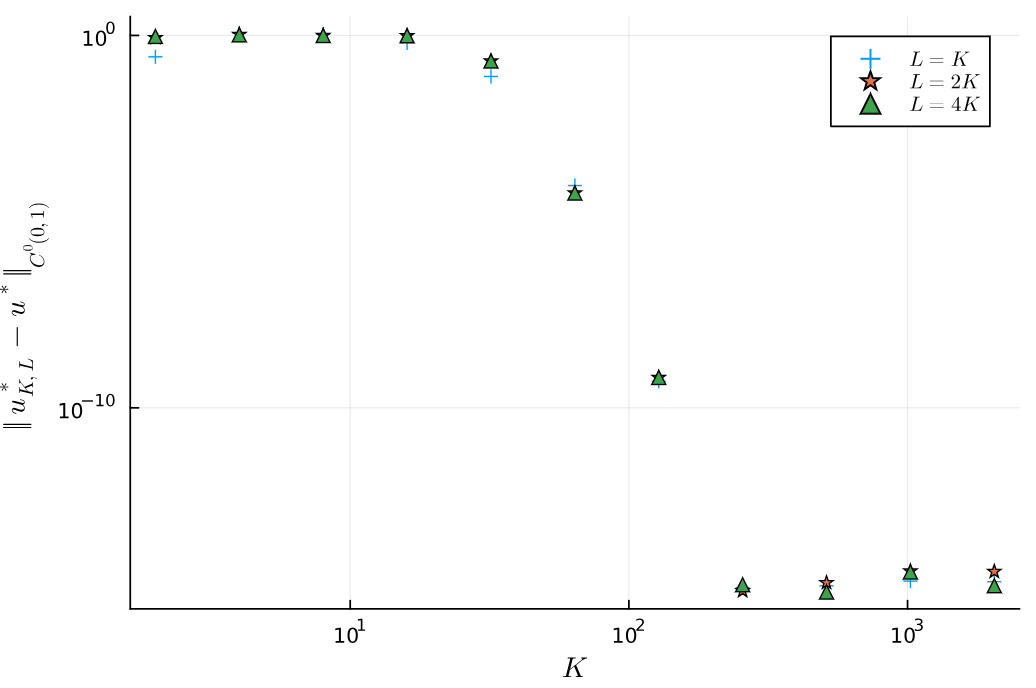}
		\caption{$\norm{u^*_{K, L} - u^*}_{\Czerosetsp{(-1, 1)}}$ for different choices of $L$ with $a=5$ and $\omega=20$}
            \label{fig: EDO error aliasing}
	\end{subfigure}
	\hfill
	\begin{subfigure}[t]{0.49\textwidth}
		\centering
		\includegraphics[width=\textwidth]{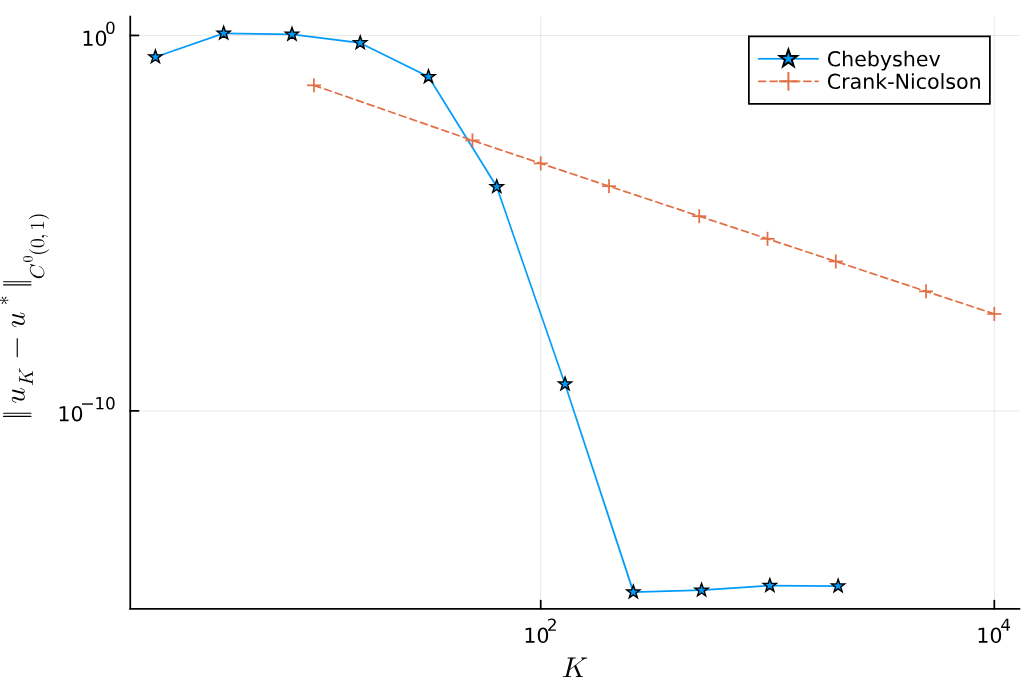}
		\caption{$\norm{u^*_{K, L} - u^*}_{\Czerosetsp{(-1, 1)}}$ for a Crank-Nicolson scheme and the global in time Chebyshev approach}
            \label{fig: EDO error plot}
	\end{subfigure}
	\caption{} \label{fig:Gaussian3D error}
\end{figure}
    
\subsection{Periodic Schrödinger equation} \label{subsec: Periodic Schrödinger equation}

We consider the Schrödinger equation on the two-dimensional torus $\TT^2 = (\RR / \ZZ)^2$, and on the time interval $(-\tau, \tau)$ :
\begin{equation} \label{equ: Schrodinger periodic}
    \begin{cases}
        i\partial_t u^* = (-\laplacian_{x,y} + V(t)) u^*,	\\
        u^*(0) = u_0,
    \end{cases}
\end{equation}
where for all $t\in \mathbb{R}$ and $(x,y)\in \TT^2$, $V(t, x, y) = \cos((2\pi(x - c_1 t)) + \cos(2\pi(y - c_2 t)) + \cos(2\pi (x - y))$. \\
A simple rescaling of the time variable shows that we can instead solve the following equation on $(-1, 1)$
\begin{equation}
\begin{cases}
    i\partial_t u^* = \tau (-\laplacian + V_\tau(t \tau)) u^*	\\
    u^*(0) = u_0
\end{cases}
\end{equation}
where $V_\tau(t, x, y) = V(t \tau, x, y)$.

The operator $-\laplacian$ has a well-known diagonal structure with respect to the orthonormal system of Fourier modes $\eexp_{k,l} = \eexp^{2i\pi kx}\eexp^{2i\pi ly}$, hence we will apply Theorem \ref{th: XX0 XX equality} and Corollary \ref{cor: Variational formulation v} with $H_0 = -\laplacian$.

Define the discrete set
\begin{equation}
    \Gamma_N
    =	\setspan \{\eexp_{k,l}\}_{\substack{-N/2 \leq k < N/2 \\ -N/2 \leq l < N/2}}
    \subset	\Ltwosp{\TT^2},
\end{equation}
and the orthogonal projector $\pi_N$ on $\Gamma_N$.
Since our goal here is to focus on the errors that arise from the time discretization, and not on the errors due to the truncation of the Fourier series, we will solve instead the ``discrete'' version of \eqref{equ: Schrodinger periodic} that is well-posed in $\Gamma_N$,
\begin{equation} \label{equ: Schrodinger periodic discrete}
    \begin{cases}
        i\partial_t u^*_N = \tau (-\laplacian + \pi_N V_\tau \pi_N) u^*_N,	\\
        u^*_N(0) = \pi_N u_0 \in \Gamma_N.
    \end{cases}
\end{equation}
Estimating the difference between the real solution $u^*$ to \eqref{equ: Schrodinger periodic} and the solution $u^*_N$ to \eqref{equ: Schrodinger periodic discrete} is another problem that can be studied independently, but we do not consider it here.

We know from Corollary \ref{cor: Variational formulation v} that we can, instead of \eqref{equ: Schrodinger periodic discrete}, solve the equivalent evolution equation
\begin{equation}
\begin{cases}
    i\partial_t v^*_N = \tau \eexp^{-it \tau \laplacian} (\pi_N V_\tau \pi_N) \eexp^{it \tau \laplacian} v^*_N,	\\
    v^*_N(0) = \pi_N u_0 \in \Gamma_N,
\end{cases}
\end{equation}
which is associated with the variational formulation
\begin{equation}
    \min_{v \in \Hone_w((-1,1), \Gamma_N)}
    \left( \abs{v(0) - \pi_N u_0}^2 + \int_{-1}^1 \diff t w(t) \abs{i\partial_t v - \tau \eexp^{-it \tau \laplacian} (\pi_N V_\tau \pi_N) \eexp^{it \tau \laplacian} v}^2 \right).
\end{equation}
We define the discrete subspace
\begin{equation}
    \Sigma_{K, N}
    :=  \set{\sum_{k=0}^{K-1} v_k U_k(t)}{(v_k)_{k=0}^{K-1} \subset \Gamma_N}
    \subset \Hone_w((-1,1), \Ltwosp{\TT^2}),
\end{equation}
and the discrete solution $v^*_{K,N}$ as the unique solution to the restricted minimization problem
\begin{equation}
    v^*_{K,N}
    =   \argmin_{v \in \Sigma_{K,N}} \left( \abs{v(0) - \pi_N u_0}^2 + \int_{-1}^1 \diff t w(t) \abs{i\partial_t v - \tau \eexp^{-it \tau \laplacian} (\pi_N V_\tau \pi_N) \eexp^{it \tau \laplacian} v}^2 \right).
\end{equation}

We employ a similar time discretization to that of Section~\ref{subsec: An ordinary differential equation}, and the matrix associated with the quadratic form is built similarly, except that it involves block entries instead of scalar entries.
The difference between the computed solution $v^*_{K,N}$ and $u^*_N$ is displayed in Figure \ref{fig: Ex2d error} for several values of $\tau$ and $N=64$. We compare it with an order 2 Crank-Nicolson scheme, and an order 4 Runge-Kutta scheme. As in the previous example, we observe a very fast convergence once $K$ reaches a certain threshold. Although the global space-time Chebyshev approach does not necessarily outperform classical time stepping scheme, it seems more efficient to reach very high precision when enough regularity is available, which is an expected behavior for a spectral method.
On Figure \ref{fig: Ex2d compute time}, we display the computation time for several values of $K$. We observe that the computational cost of the Chebyshev for a given value of $K$ has the same order of magnitude as the time-stepping schemes. The fact that it increases with $\tau$ is a consequence of the loss of quality of the preconditioner which leads to more iterations of the conjugate gradient algorithm used to solve the problem. Indeed, a smaller $\tau$ means that the equation is closer to the free dynamics that we use as a preconditioner.
Finally, Figure \ref{fig: Ex2d error wrt compute time} displays the errors of the different methods with respect to the computation cost. Since the computational cost of the different methods are very close, Figure \ref{fig: Ex2d error wrt compute time} looks very much like Figure \ref{fig: EDO error plot} with some corrections, and the same observations can be made.

\begin{figure}[h]
	\centering
	\begin{subfigure}[t]{0.49\textwidth}
		\centering
		\includegraphics[width=0.9\textwidth]{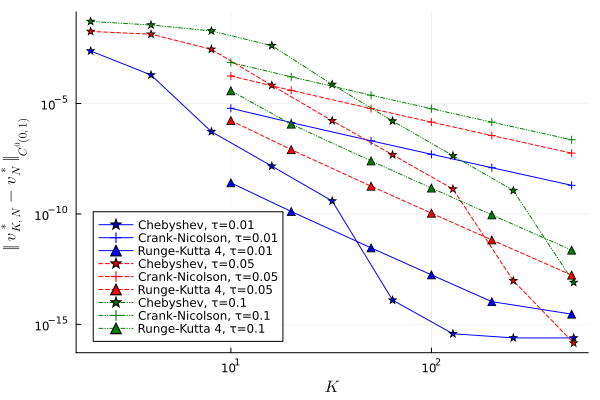}
        \caption{$\norm{v^*_{K, N} - v^*_N}_{\Czerosetsp{(-1, 1)}}$ for a Crank-Nicolson scheme, an order 4 Runge-Kutta scheme, and the global in time Chebyshev approach, for $N=64$}
            \label{fig: Ex2d error}
	\end{subfigure}
	\hfill
	\begin{subfigure}[t]{0.49\textwidth}
		\centering
		\includegraphics[width=0.9\textwidth]{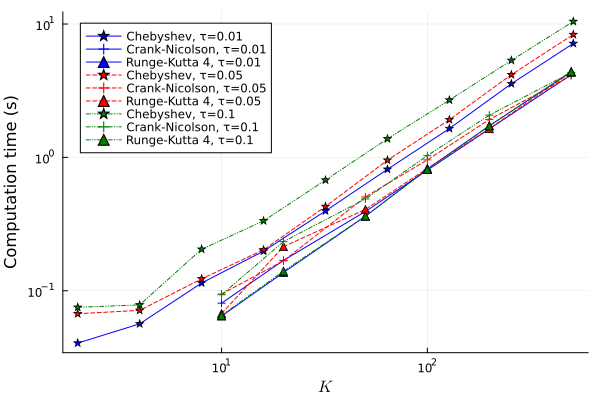}
        \caption{Computation time for a Crank-Nicolson scheme, an order 4 Runge-Kutta scheme, and the global in time Chebyshev approach, for $N=64$}
            \label{fig: Ex2d compute time}
	\end{subfigure}	
	\begin{subfigure}[t]{0.5\textwidth}
		\includegraphics[width=0.9\textwidth]{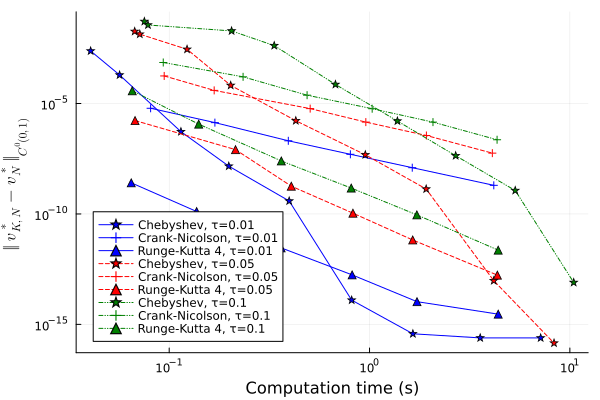}
        \caption{$\norm{v^*_{K, N} - v^*_N}_{\Czerosetsp{(-1, 1)}}$ with respect to computation time for a Crank-Nicolson scheme, an order 4 Runge-Kutta scheme, and the global in time Chebyshev approach, for $N=64$}
            \label{fig: Ex2d error wrt compute time}
	\end{subfigure}
	\caption{} \label{fig:Gaussian error}
\end{figure}

\section{Weak solutions to the Schrödinger equation} \label{sec: Weak solutions to the Schrödinger equation}

In this section we recall essential properties of weak solutions as defined in Definition \ref{def: definition weak solution}.
While the rest of the paper only mentioned the case of a time interval $I = (0, T)$, we consider here a general (possibly unbounded, unless stated otherwise) open time interval $J \subset \RR$ and we assume that $0 \in \setclosure J$.
We recall the definition of weak solutions in this context (see also Definition \ref{def: definition weak solution}):
\begin{definition} \label{def: Weak solutions unbounded}
    Let $u, f \in \Ltwosp{J, \HHilb}$. We say that $(i\partial_t - H - B(t)) u = f$ holds weakly if and only if
    \begin{equation}
        \forall \varphi \in \Cczerosetsp{J, \domain H} \cap \Cconesetsp{J, \HHilb}, \quad
        \dotprodparenthesis{u}{(i\partial_t - H) \varphi}_{\Ltwosp{J, \HHilb}}
        =	\dotprodparenthesis{f}{\varphi}_{\Ltwosp{J, \HHilb}},
    \end{equation}
    where $\domain H$ is equipped with the graph norm of $H$.
\end{definition}

We first deal with the case $B(t) = 0$.
\begin{proposition} \label{prop: Weak evolution derivative}
    Let $u \in \Ltwosp{J, \HHilb}$, and $v = \eexp^{itH} u$. The following are equivalent :
    \begin{enumerate}
        \item $(i\partial_t - H) u \in \Ltwosp{J, \HHilb}$.
        \item $i\partial_t v \in \Ltwosp{J, \HHilb}$.
    \end{enumerate}
    In particular, if 1. and 2. hold, $u$ and $v$ both belong to $\Czerosetsp{J, \HHilb}$.
    Moreover, in this case, $i\partial_t v = \eexp^{itH} (i\partial_t - H) u$, and
    \begin{equation}
        \forall t \in J, \quad
        u(t) = \eexp^{-itH} u(0) - i \int_0^t \diff s \eexp^{-i(t-s)H} (i\partial_t - H) u(s).
    \end{equation}
\end{proposition}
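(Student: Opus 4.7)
The plan is to exploit the pointwise conjugation identity
\begin{equation}
    (i\partial_t - H)\left(\eexp^{-itH}\psi\right) = \eexp^{-itH}(i\partial_t \psi),
\end{equation}
which holds whenever $\psi$ is smooth in $t$ and takes values in $\domain H$. This is the rigorous counterpart of the heuristic differentiation $i\partial_t(\eexp^{itH} u) = \eexp^{itH}(i\partial_t - H) u$, and will be inserted into the weak formulation of \Cref{def: Weak solutions unbounded} via a suitable choice of test function. A preliminary step is to check that conjugation by $\eexp^{\pm itH}$ preserves the test function class $\Cczerosetsp{J, \domain H} \cap \Cconesetsp{J, \HHilb}$, which follows from the strong continuity of the group, from the commutation $H\eexp^{\pm itH}\xi = \eexp^{\pm itH}H\xi$ for $\xi \in \domain H$, and from the pointwise derivative formula $\partial_t(\eexp^{\pm itH}\xi(t)) = \pm iH\eexp^{\pm itH}\xi(t) + \eexp^{\pm itH}\partial_t\xi(t)$.

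For the implication (1) $\Rightarrow$ (2), I would set $f := (i\partial_t - H)u \in \Ltwosp{J, \HHilb}$ and, for arbitrary $\psi \in \Ccinfsetsp{J, \HHilb}$, introduce the spectral regularization $\psi_n := \pi_{[-n, n]}(H)\psi \in \Ccinfsetsp{J, \domain H}$ (the projector commutes with time differentiation and has range in $\domain H$). Then $\varphi_n := \eexp^{-itH}\psi_n$ is an admissible test function, and the conjugation identity combined with \Cref{def: Weak solutions unbounded} yields
\[
    \dotprodparenthesis{v}{i\partial_t \psi_n}_{\Ltwosp{J, \HHilb}}
    = \dotprodparenthesis{u}{(i\partial_t - H)\varphi_n}_{\Ltwosp{J, \HHilb}}
    = \dotprodparenthesis{f}{\varphi_n}_{\Ltwosp{J, \HHilb}}
    = \dotprodparenthesis{\eexp^{itH} f}{\psi_n}_{\Ltwosp{J, \HHilb}}.
\]
Since $\psi_n \to \psi$ and $\partial_t\psi_n \to \partial_t\psi$ in $\Ltwosp{J, \HHilb}$ by dominated convergence, passing to the limit shows that $i\partial_t v = \eexp^{itH} f$ in the sense of distributions, which lies in $\Ltwosp{J, \HHilb}$.

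The reverse implication (2) $\Rightarrow$ (1) is symmetric: set $g := \eexp^{-itH}(i\partial_t v) \in \Ltwosp{J, \HHilb}$, and, given any admissible test function $\varphi$, let $\tilde\varphi := \eexp^{itH}\varphi$, which is again admissible. The identity $(i\partial_t - H)\varphi = \eexp^{-itH}(i\partial_t \tilde\varphi)$ together with the classical time integration by parts (valid because $v \in H^1_{\mathrm{loc}}(J, \HHilb)$ and $\tilde\varphi \in \Cconesetsp{J, \HHilb}$) gives
\[
    \dotprodparenthesis{u}{(i\partial_t - H)\varphi}_{\Ltwosp{J, \HHilb}}
    = \dotprodparenthesis{v}{i\partial_t \tilde\varphi}_{\Ltwosp{J, \HHilb}}
    = \dotprodparenthesis{i\partial_t v}{\tilde\varphi}_{\Ltwosp{J, \HHilb}}
    = \dotprodparenthesis{g}{\varphi}_{\Ltwosp{J, \HHilb}},
\]
which proves that $(i\partial_t - H)u = g$ weakly.

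Once 1. and 2. hold, the continuity of $v$ follows from the embedding $H^1_{\mathrm{loc}}(J, \HHilb) \hookrightarrow \Czerosetsp{J, \HHilb}$, and that of $u = \eexp^{-itH} v$ from the strong continuity of the unitary group. The Duhamel representation is then obtained by integrating the pointwise equality $\partial_t v = -i\eexp^{itH} f$ between $0$ and $t$ and multiplying the result by $\eexp^{-itH}$. The main technical obstacle is the verification that $\eexp^{\pm itH}$ stabilizes the asymmetric test function class $\Cczerosetsp{J, \domain H} \cap \Cconesetsp{J, \HHilb}$ and that the cut-off $\pi_{[-n,n]}(H)$ genuinely produces elements of $\Ccinfsetsp{J, \domain H}$ with the required $\Ltwo$ convergence of $\psi_n$ and $\partial_t\psi_n$; both follow from elementary spectral-calculus considerations, after which the argument is a direct bookkeeping of the conjugation identity in duality.
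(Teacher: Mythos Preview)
Your proof is correct and follows essentially the same strategy as the paper's: both exploit the conjugation identity $(i\partial_t - H)(\eexp^{-itH}\psi) = \eexp^{-itH}(i\partial_t\psi)$ inside the weak formulation, with a regularization to ensure the transformed function is an admissible test function. The only cosmetic difference is that the paper works with test functions in $\Ccinfsetsp{J, \domain{H^2}}$ and then appeals to density, whereas you regularize with the spectral projectors $\pi_{[-n,n]}(H)$ and pass to the limit; for $(2)\Rightarrow(1)$ your argument is in fact slightly more direct, since you verify that $\eexp^{itH}$ stabilizes the full test class and avoid an extra density step.
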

\begin{proof}
    Assume $(i\partial_t - H) u = f \in \Ltwosp{J, \HHilb}$, then, for any $\varphi \in \Ccinfsetsp{J, \domain{H^2}}$,
    \[
        \dotprodparenthesis{v}{i\partial_t \varphi}_{\Ltwosp{J, \HHilb}}
        =	\dotprodparenthesis{u}{\eexp^{-itH} i\partial_t \varphi}_{\Ltwosp{J, \HHilb}}
        =	\dotprodparenthesis{u}{(i\partial_t - H) \eexp^{-itH} i\partial_t \varphi}_{\Ltwosp{J, \HHilb}}
        =	\dotprodparenthesis{f}{\eexp^{-itH} \varphi}_{\Ltwosp{J, \HHilb}}.
    \]
    By density, this identity extends to all $\varphi \in \Cconesetsp{J, \HHilb}$, and we conclude that $v \in \Honesp{J, \HHilb}$ with $i\partial_t v = \eexp^{itH} f$.
    
    Conversely, assume $i\partial_t v = g \in \Ltwosp{J, \HHilb}$. Then for any $\varphi \in \Ccinfsetsp{J, \domain{H^2}}$,
    \[
        \dotprodparenthesis{u}{(i\partial_t - H) \varphi}_{\Ltwosp{J, \HHilb}}
        =	\dotprodparenthesis{v}{\eexp^{itH} (i\partial_t - H) \varphi}_{\Ltwosp{J, \HHilb}}
        =	\dotprodparenthesis{v}{i\partial_t \eexp^{itH} \varphi}_{\Ltwosp{J, \HHilb}}
        =	\dotprodparenthesis{g}{\eexp^{itH} \varphi}_{\Ltwosp{J, \HHilb}}.
    \]
    By density, the identity extends to all $\varphi \in \Cczerosetsp{J, \domain H} \cap \Cconesetsp{J, \HHilb}$, so $(i\partial_t - H) u = \eexp^{-itH} g \in \Ltwosp{J, \HHilb}$.
    Moreover, we then have for all $t \in J$
    \[
    \begin{aligned}
        u(t)
        &=	\eexp^{-itH} v(t)
        =	\eexp^{-itH} (v(0) - i \int_0^t \diff s g(s))	\\
        &=	\eexp^{-itH} u(0) - i \int_0^t \diff s \eexp^{-i(t-s)H} (i\partial_t - H) u(s).
    \end{aligned}
    \]
\end{proof}

\begin{corollary} \label{cor: Weak solutions unbounded}
    For any $f \in \Ltwosp{J, \HHilb}$ and $u_0 \in \HHilb$, there exists exactly one $u^* \in \Ltwosp{J, \HHilb}$ such that $(i\partial_t - H) u^* = f$ and $u^*(0) = u_0$.
\end{corollary}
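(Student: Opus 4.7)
The plan is to use Proposition \ref{prop: Weak evolution derivative} to transfer the problem from $u$ to $v := \eexp^{itH} u$, which converts the weak Schrödinger equation into a standard first-order ODE in the Hilbert space $\HHilb$. Explicitly, since $\eexp^{itH}$ is a unitary group, the map $u \mapsto \eexp^{itH} u$ is an isometry of $\Ltwosp{J, \HHilb}$, and by Proposition \ref{prop: Weak evolution derivative}, it identifies the set of $u \in \Ltwosp{J, \HHilb}$ with $(i\partial_t - H) u \in \Ltwosp{J, \HHilb}$ with the space $\Honesp{J, \HHilb}$, the derivatives being related by $i\partial_t v = \eexp^{itH} (i\partial_t - H) u$ and the pointwise values at $t=0$ being equal. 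Thus the problem is equivalent to finding a unique $v^* \in \Honesp{J, \HHilb}$ such that $i\partial_t v^* = \eexp^{itH} f$ and $v^*(0) = u_0$.

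For existence, I would explicitly define
\begin{equation*}
    v^*(t) := u_0 - i \int_0^t \diff s \eexp^{isH} f(s), \quad t \in J.
\end{equation*}
Since $\eexp^{isH} f \in \Ltwosp{J, \HHilb}$ (as $\eexp^{isH}$ preserves norms pointwise), the integral is a well-defined $\HHilb$-valued absolutely continuous function on $J$ whose weak derivative is $-i \eexp^{itH} f$. Hence $v^* \in \Honesp{J, \HHilb}$ (at least on bounded subintervals; on an unbounded $J$ one uses the local $\Honesp{}$-property together with the fact that $i\partial_t v^* \in \Ltwosp{J, \HHilb}$ to conclude $v^* \in \Honesp{J, \HHilb}$), and it satisfies $i\partial_t v^* = \eexp^{itH} f$ and $v^*(0) = u_0$. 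Setting $u^*(t) := \eexp^{-itH} v^*(t)$, Proposition \ref{prop: Weak evolution derivative} gives $(i\partial_t - H) u^* = f$ in the weak sense and $u^*(0) = u_0$. This also recovers the Duhamel formula.

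For uniqueness, suppose $u_1, u_2 \in \Ltwosp{J, \HHilb}$ both solve the problem, and set $w := u_1 - u_2$. Then $(i\partial_t - H) w = 0$ weakly and $w(0) = 0$. Applying Proposition \ref{prop: Weak evolution derivative} again to $\tilde w := \eexp^{itH} w$, we obtain $i \partial_t \tilde w = 0$ in $\Ltwosp{J, \HHilb}$ and $\tilde w(0) = 0$. Since a function in $\Honesp{J, \HHilb}$ with zero weak derivative is constant, and it vanishes at $0$, we conclude $\tilde w \equiv 0$, hence $w \equiv 0$. The only mildly delicate point is the justification that the explicit $v^*$ really belongs to $\Honesp{J, \HHilb}$ (not just locally) when $J$ is unbounded; this follows directly from the fact that its weak derivative $\eexp^{itH} f$ is in $\Ltwosp{J, \HHilb}$, together with the pointwise $\HHilb$-valued formula for $v^*$ on any bounded subinterval. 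No other step presents any real obstacle; the corollary is essentially a restatement of the isomorphism $\XX_H \simeq \Honesp{J, \HHilb}$ established in Proposition \ref{prop: Weak evolution derivative}.
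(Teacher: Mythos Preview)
Your proof is correct and takes essentially the same route as the paper: define $v^*(t) = u_0 - i\int_0^t \eexp^{isH}f(s)\,\diff s$, set $u^* = \eexp^{-itH}v^*$, and invoke Proposition~\ref{prop: Weak evolution derivative} for both existence and uniqueness (the paper's uniqueness argument compares $\partial_t v_1^* = \partial_t v_2^*$ directly rather than subtracting, but this is the same idea). One small caveat: your parenthetical claim that $i\partial_t v^* \in \Ltwosp{J,\HHilb}$ together with local $\Hone$-regularity forces $v^* \in \Honesp{J,\HHilb}$ on an unbounded $J$ is false (take $u_0 \neq 0$, $f=0$, so $v^* \equiv u_0$); this is really a defect in the statement itself, which for unbounded $J$ should read $\Lloctwo$, and the paper's own proof glosses over it as well.
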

\begin{proof}
    \underline{Existence:} Let $f \in \Ltwosp{J, \HHilb}$. Define
    \[
        v^*(t) = u_0 - i \int_0^t \diff s \eexp^{isH} f(s),
    \]
    and $u^* = \eexp^{-itH} v^*$. Then, by Proposition \ref{prop: Weak evolution derivative}, $(i\partial_t - H) u^* = \eexp^{-itH} \eexp^{itH} i\partial_t v^* = f$, and $u^*(0) = u_0$.
    
    \underline{Uniqueness:} Assume $(i\partial_t - H) u^*_1 = (i\partial_t - H) u^*_2 = f \in \Ltwosp{J, \HHilb}$ and $u^*_1(0) = u^*_2(0) = u_0$. Define $v^*_j = \eexp^{itH} u^*_j$ ($j=1, 2$), we then have $\partial_t v^*_1 = \partial_t v^*_2$. Since $v^*_1(0) = v^*_2(0) = u_0$, uniqueness follows.
\end{proof}

We now add a time-dependent part $B(t)$, assumed to be a strongly continuous family of uniformly bounded self-adjoint operators (that is, for any $t \in J$, $B(t)$ is a bounded self-adjoint operator, and $\sup_{t \in J} \norm{B(t)} < \infty$).
\begin{proposition} \label{prop: Weak solutions time dependent}
    Assume $J$ is \textbf{bounded}.
    For any $f \in \Ltwosp{J, \HHilb}$ and $u_0 \in \HHilb$ there exist a unique $u^* \in \Ltwosp{J, \HHilb}$ such that $(i\partial_t - H - B(t)) u^* = f$ and $u(0) = u_0$.
    Furthermore, the following continuity estimate holds:
    \begin{equation} \label{equ: XX continity estimate time dependent}
        \norm{u^*}_{\Czerosetsp{J, \HHilb}}
        \leq    \sqrt 2 \left( \abs{u_0}^2 + T \norm f_{\Ltwosp{J, \HHilb}}^2 \right)^{\frac 1 2}
    \end{equation}
\end{proposition}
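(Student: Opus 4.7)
The plan is to reduce to a time-independent setting by conjugation. Setting $v(t) = \eexp^{itH} u(t)$ and applying Proposition \ref{prop: Weak evolution derivative}, the equation $(i\partial_t - H - B(t)) u = f$ becomes
\begin{equation*}
i\partial_t v = \widetilde B(t) v + \widetilde f, \qquad v(0) = u_0,
\end{equation*}
where $\widetilde B(t) := \eexp^{itH} B(t) \eexp^{-itH}$ is a strongly continuous family of bounded self-adjoint operators on $\HHilb$ satisfying $\sup_{t \in J} \|\widetilde B(t)\| \leq M := \sup_{t \in J} \|B(t)\|$, and $\widetilde f(t) := \eexp^{itH} f(t) \in \Ltwosp{J, \HHilb}$. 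By Proposition \ref{prop: Weak evolution derivative} the original problem for $u \in \Ltwosp{J, \HHilb}$ is equivalent to finding $v \in \Czerosetsp{\setclosure J, \HHilb} \cap \Honesp{J, \HHilb}$ solving the above ODE.

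Existence and uniqueness then follow from a standard Picard argument on the integral equation
\begin{equation*}
v(t) = u_0 - i\int_0^t \bigl(\widetilde B(s) v(s) + \widetilde f(s)\bigr) \diff s.
\end{equation*}
Since $J$ is bounded, the map $\Phi(v) = u_0 - i\int_0^{\cdot}(\widetilde B v + \widetilde f)$ is a contraction on $\Czerosetsp{\setclosure K, \HHilb}$ for any subinterval $K \subset J$ with $|K| < 1/M$, and its fixed point can be extended to all of $\overline J$ by concatenating over a partition of $\overline J$ with sufficiently small mesh. The resulting continuous $v$ automatically belongs to $\Honesp{J, \HHilb}$ since $\widetilde B v + \widetilde f \in \Ltwosp{J, \HHilb}$, so $u := \eexp^{-itH} v$ is the unique element of $\XX_H$ with $(i\partial_t - H - B(t)) u = f$ and $u(0) = u_0$.

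For the continuity estimate the key observation is that, because $v \in \Honesp{J, \HHilb}$, the map $t \mapsto |v(t)|^2$ is absolutely continuous and
\begin{equation*}
\frac{d}{dt} |v(t)|^2 = 2 \Re \dotprodbracket{\partial_t v(t)}{v(t)} = 2\Re\dotprodbracket{-i\widetilde B(t) v(t)}{v(t)} + 2\Re\dotprodbracket{-i\widetilde f(t)}{v(t)}.
\end{equation*}
The self-adjointness of $\widetilde B(t)$ makes $\dotprodbracket{-i\widetilde B(t) v}{v}$ purely imaginary, so the first term vanishes, leaving
\begin{equation*}
|v(t)|^2 \leq |u_0|^2 + 2\int_0^{|t|} |\widetilde f(s)|\, |v(s)| \diff s = |u_0|^2 + 2\int_0^{|t|} |f(s)|\, |u(s)| \diff s,
\end{equation*}
using $|\widetilde f| = |f|$ and $|v| = |u|$ by unitarity of $\eexp^{itH}$. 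Applying the classical Bihari lemma (if $\phi^2 \leq A^2 + 2\int_0^t g \phi$ then $\phi \leq A + \int_0^t g$) and then Cauchy--Schwarz gives
\begin{equation*}
|u(t)| \leq |u_0| + \sqrt{T}\, \|f\|_{\Ltwosp{J, \HHilb}},
\end{equation*}
whence \eqref{equ: XX continity estimate time dependent} with the constant $\sqrt{2}$.

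The main point that requires care is precisely this last step: a naive Gronwall estimate applied to the Duhamel formula for $u$ (or $v$) would produce a constant of the form $\eexp^{MT}$, which is strictly worse than the $B$-independent constant claimed. Avoiding this exponential factor forces us to exploit the self-adjointness of $B(t)$ through the energy identity above rather than the integral formulation; once the identity is established, the rest is routine. The only minor technicality is that the pointwise energy identity needs to be justified for $v \in \Honesp{J, \HHilb}$, which follows from standard mollification in time (approximating $v$ by smooth functions and passing to the limit, as in the proof of the density of $\mathcal W^\infty_H$ in $\XX_H$).
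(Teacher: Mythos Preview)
Your proof is correct and follows essentially the same strategy as the paper's, with two minor organizational differences worth noting. For existence and uniqueness, the paper applies the Banach fixed-point theorem directly in $\Ltwosp{J,\HHilb}$ equipped with the weighted norm $\|u\|' = (\int_J \eexp^{-\mu t}|u(t)|^2\,dt)^{1/2}$ for $\mu$ large, which makes $\Phi$ globally contractive in one shot; you instead use the more pedestrian small-interval Picard argument in $\Czerosetsp{\overline K,\HHilb}$ and concatenate. Both are standard and yield the same conclusion. For the continuity estimate, the paper differentiates $|u^*(t)|^2$ via the Duhamel formula (which requires first assuming $f\in\Czerosetsp{J,\HHilb}$ and then approximating), obtains the same identity $\frac{d}{dt}|u^*(t)|^2 = 2\Im\langle u^*(t),f(t)\rangle$, and closes with a hands-on comparison function $w_\varepsilon(t)=\varepsilon+|u_0|+\int_0^t|f|$; your route via $v\in\Honesp{J,\HHilb}$ gives the energy identity directly for all $f\in\Ltwosp{J,\HHilb}$ without the density step, and your Bihari-type lemma is exactly the inequality that the paper's comparison argument establishes by hand. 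Your version is arguably slightly cleaner in this respect.
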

\begin{proof}
    The result is proved by a standard fixed-point argument for ODEs.
    For simplicity, we assume here $J = (0, T)$ for some $T > 0$. The extension to negative times is straightforward.
    Set
    \[
        M = \sup_{t \in J} \norm{B(s)}.
    \]
    We also define, for $\mu > 0$,
    \[
        \forall u \in \Ltwosp{J, \HHilb}, \quad
        \norm u_{\Ltwosp{J, \HHilb}}' 
        =   \left( \int_0^T \diff t \eexp^{-\mu t} \abs{u(t)}^2 \right)^{\frac 1 2},
    \]
    which is a norm on $\Ltwosp{J, \HHilb}$ equivalent to $\norm \cdot_{\Ltwosp{J, \HHilb}}$.
    
    For any $u \in \Ltwosp{J, \HHilb}$ and $t \in J$, set
    \[
        \Phi(u)(t)
        =   \eexp^{-itH} u_0 - i \int_0^t \diff s \eexp^{-i(t-s)H} (f(s) - B(s) u(s)).
    \]
    Clearly $\Phi(u) \in \Ltwosp{J, \HHilb}$, and for any $u, v \in \Ltwosp{J, \HHilb}$,
    \[
    \begin{aligned}
        {\norm{\Phi(u) - \Phi(v)}_{\Ltwosp{J, \HHilb}}'}^2
        &=   \int_0^T \diff t \eexp^{-\mu t} \abs{\int_0^t \diff s \eexp^{-i(t-s)H} B(s)(v(s) - u(s))}^2    \\
        &\leq   M^2 T \int_0^T \diff t \eexp^{-\mu t} \int_0^T \diff s \abs{u(s) - v(s)}^2   \\
        &\leq   M^2 T \int_0^T \diff s \abs{u(s) - v(s)}^2 \int_s^\infty \diff t \eexp^{-\mu t} \\
        &\leq   \frac{M^2 T}{\mu} \int_0^T \diff s \eexp^{-\mu s} \abs{u(s) - v(s)}^2.
    \end{aligned}
    \]
    Choosing $\mu$ such that $\frac{M^2 T}{\mu} < 1$, this proves that $\Phi$ is contractive for $\norm \cdot_{\Ltwosp{J, \HHilb}}'$, hence admits a unique fixed point $u^*$ which satisfies for all $t \in \HHilb$
    \begin{equation} \label{equtmp: u-star fixed point}
        u^*(t)
        =   \eexp^{-itH} u_0 - i \int_0^t \diff s \eexp^{-i(t-s)H} (f(s) - B(s) u^*(s)),
    \end{equation}
    which, by Proposition \ref{prop: Weak evolution derivative}, is equivalent to $(i\partial_t - H - B(t)) u = f$ and $u(0) = u_0$.

    To prove the continuity estimate, we first assume $f \in \Czerosetsp{J, \HHilb}$, and write for any $t \in J$
    \[
        \abs{u^*(t)}^2
        =   \abs{u_0 - i \int_0^t \diff s \eexp^{isH} (f(s) - B(s) u^*(s))}^2,
    \]
    from which we deduce
    \[
    \begin{aligned}
        \frac{\diff{}}{\diff t} \abs{u^*(t)}^2
        &=  2 \Re \dotprodbracket{u_0 - i \int_0^t \diff s \eexp^{isH} (f(s) - B(s) u^*(s))}{-i \eexp^{itH}(f(t) - B(t) u^*(t))}   \\
        &=  2 \Im \dotprodbracket{u^*(t)}{f(t) - B(t) u^*(t)}
        =   2 \Im \dotprodbracket{u^*(t)}{f(t)}.
    \end{aligned}
    \]
    This yields
    \[
        \forall t \in J, \quad
        \abs{\frac{\diff{}}{\diff t} \abs{u^*(t)}^2}
        \leq    2 \abs{u^*(t)} \abs{f(t)}.
    \]
    Let $\varepsilon > 0$ and $w_\varepsilon(t) = \varepsilon + \abs{u_0} + \int_0^t \diff s \abs{f(s)}$.
    Let $K_\varepsilon = \quickset{w_\varepsilon \geq \abs{u^*}} \subset \setclosure J$, which is closed (by continuity of $w$ and $u^*$) and contains $0$.
    Now assume $t_0 := \inf \setclosure J \setminus K_\varepsilon < T$. By continuity, it holds $\abs{u^*(t_0)} = w(t_0) \geq \varepsilon > 0$, and there exists a $\delta > 0$ such that $[t_0, t_0 + \delta] \subset J$ and $\abs{u^*(t)}$ for every $t \in [t_0, t_0 + \delta]$. Therefore,
    \[
        \forall t \in [t_0 - \delta, t_0 + \delta], \quad
        2 \abs{u^*(t)} \frac{\diff{}}{\diff t} \abs{u^*(t)}
        =   \frac{\diff{}}{\diff t} \abs{u^*(t)}^2
        \leq    2 \abs{f(t)} \abs{u^*(t)},
    \]
    which yields
    \[
        \forall t \in [t_0, t_0 + \delta], \quad
        \frac{\diff{}}{\diff t} \abs{u^*(t)}
        \leq    \abs{f(t)}.
    \]
    It follows by integration that
    \[
        \forall t \in [t_0, t_0 + \delta], \quad
        \abs{u^*(t)}
        \leq    \abs{u^*(t_0)} + \int_{t_0}^t \diff s \abs{f(s)}
        =   w(t),
    \]
    which contradicts the definition of $t_0$.
    Therefore,
    \[
        \forall t \in \setclosure J, \quad
        \abs{u^*(t)}
        \leq    w_\varepsilon(t)
        \leq    \varepsilon + \abs{u_0} + \sqrt T \norm f_{\Ltwosp{J, \HHilb}}
        \leq    \varepsilon + \sqrt 2 \left( \abs{u_0}^2 + T \norm f_{\Ltwosp{J, \HHilb}}^2 \right)^{\frac 1 2}.
    \]
    Since this holds for any $\varepsilon > 0$, we obtain the desired inequality.

    For general $f \in \Ltwosp{J, \HHilb}$, we take a sequence $(f_n)_{n \geq 0}$ such that $\norm{f_n - f}_{\Ltwosp{J, \HHilb}} \to 0$, and define $u^*_n$ the unique element of $\Ltwosp{J, \HHilb}$ such that
    \begin{equation} \label{equtmp: u-star n fixed point}
        u^*_n(t)
        =   \eexp^{-itH} u_0 - i \int_0^t \diff s \eexp^{-i(t-s)H} (f_n(s) - B(s) u^*_n(s)).
    \end{equation}
    As earlier, we compare $u^*_n$ and $u^*$ in term of $\norm \cdot'_{\Ltwosp{J, \HHilb}}$, and obtain with \eqref{equtmp: u-star fixed point} and \eqref{equtmp: u-star n fixed point}
    \[
        {\norm{u_n^* - u^*}'_{\Ltwosp{J, \HHilb}}}^2
        \leq    C \norm{f_n - f}_{\Ltwosp{J, \HHilb}}^2 + \frac{M^2 T}{\mu} {\norm{u_n^* - u^*}'_{\Ltwosp{J, \HHilb}}}^2,
    \]
    yielding
    \[
        \norm{u_n^* - u^*}'_{\Ltwosp{J, \HHilb}}
        \leq    C \norm{f_n - f}_{\Ltwosp{J, \HHilb}},
    \]
    for some constant $C > 0$.
    This proves that $u^*_n \to u^*$ in $\Ltwosp{J, \HHilb}$, and using \eqref{equtmp: u-star fixed point} and \eqref{equtmp: u-star n fixed point} again we deduce that $u^*_n \to u^*$ in $\Czerosetsp{J, \HHilb}$. The result appears by taking $n \to \infty$ in
    \[
        \forall n \geq 0, \quad
        \norm{u^*_n}_{\Czerosetsp{J, \HHilb}}
        \leq    \sqrt 2 \left( \abs{u_0}^2 + T \norm{f_n}_{\Ltwosp{J, \HHilb}}^2 \right)^{\frac 1 2}.
    \]
\end{proof}

\bibliographystyle{plain}
\bibliography{biblio}
	
\end{document}